\documentclass[12pt, a4paper, oneside]{amsart}
\usepackage[english]{babel}
\usepackage{amsmath, amscd, amssymb, amsfonts, stmaryrd, vmargin, amsthm, mathrsfs}

\newtheorem{theorem}{Theorem}[section]
\newtheorem{lemma}[theorem]{Lemma}
\newtheorem{proposition}[theorem]{Proposition}
\newtheorem{corollary}[theorem]{Corollary}
\newtheorem{example}[theorem]{Example}
\newtheorem{remark}[theorem]{Remark}
\numberwithin{equation}{section}

\begin{document}
\title[On the structure of holomorphic isometric embeddings]{On the structure of holomorphic isometric embeddings of complex unit balls into bounded symmetric domains}
\author{Shan Tai Chan}
\address{Department of Mathematics, Syracuse University, Syracuse, NY 13244-1150, USA}
\curraddr{}
\email{schan08@syr.edu}
\thanks{}

\subjclass[2010]{Primary 32M15, 53C55, 53C42}

\begin{abstract}
We study general properties of holomorphic isometric embeddings of complex unit balls $\mathbb B^n$ into bounded symmetric domains of rank $\ge 2$.
In the first part, we study holomorphic isometries from $(\mathbb B^n,kg_{\mathbb B^n})$ to $(\Omega,g_\Omega)$ with non-minimal isometric constants $k$ for any irreducible bounded symmetric domain $\Omega$ of rank $\ge 2$, where $g_D$ denotes the canonical K\"ahler-Einstein metric on any irreducible bounded symmetric domain $D$ normalized so that minimal disks of $D$ are of constant Gaussian curvature $-2$.
In particular, results concerning the upper bound of the dimension of isometrically embedded $\mathbb B^n$ in $\Omega$ and the structure of the images of such holomorphic isometries were obtained.

In the second part, we study holomorphic isometries from $(\mathbb B^n,g_{\mathbb B^n})$ to $(\Omega,g_\Omega)$ for any irreducible bounded symmetric domains $\Omega\Subset \mathbb C^N$ of rank equal to $2$ with $2N>N'+1$, where $N'$ is an integer such that $\iota:X_c\hookrightarrow \mathbb P^{N'}$ is the minimal embedding (i.e., the first canonical embedding) of the compact dual Hermitian symmetric space $X_c$ of $\Omega$.
We completely classify images of all holomorphic isometries from $(\mathbb B^n,g_{\mathbb B^n})$ to $(\Omega,g_\Omega)$ for $1\le n \le n_0(\Omega)$, where $n_0(\Omega):=2N-N'>1$.
In particular, for $1\le n \le n_0(\Omega)-1$ we prove that any holomorphic isometry from $(\mathbb B^n,g_{\mathbb B^n})$ to $(\Omega,g_\Omega)$ extends to some holomorphic isometry from $(\mathbb B^{n_0(\Omega)},g_{\mathbb B^{n_0(\Omega)}})$ to $(\Omega,g_\Omega)$.
\end{abstract}
\keywords{Bergman metrics, holomorphic isometric embeddings, bounded symmetric domains, Borel embedding \and complex unit balls}
\maketitle
\setcounter{page}{1}

\section{Introduction}\label{Sec:1}
In \cite{Ca53}, Calabi studied local holomorphic isometries from K\"ahler manifolds endowed with real-analytic metrics into complex space forms and their local rigidity.
Many results concerning local holomorphic isometric embeddings between bounded symmetric domains were obtained by Mok \cite{Mok2002, Mok11, Mok12, Mok16} and by Ng \cite{Ng10, Ng11}.
In \cite{CM16}, Mok and the author obtained a general result concerning general properties of the images of holomorphic isometric embeddings from $(\mathbb B^n,g_{\mathbb B^n})$ to $(\Omega,g_\Omega)$, where $g_D$ denotes the canonical K\"ahler-Einstein metric on $D$ normalized so that minimal disks of $D$ are of constant Gaussian curvature $-2$ for any irreducible bounded symmetric domain $D\Subset \mathbb C^N$ in its Harish-Chandra realization.
In addition, Mok and the author \cite{CM16} classified images of all holomorphic isometric embeddings from $(\mathbb B^m,g_{\mathbb B^m})$ to $(D^{\mathrm{IV}}_n,g_{D^{\mathrm{IV}}_n})$ for $1\le m\le n-1$ and $n\ge 3$, where $D^{\mathrm{IV}}_n$ denotes the type-$\mathrm{IV}$ domain (i.e., the Lie ball) of complex dimension $n$ (see Section \ref{Sec:2}).
On the other hand, Xiao-Yuan \cite{XY16} and Upmeier-Wang-Zhang \cite{UWZ16} classified all holomorphic isometric embeddings from $(\mathbb B^{n-1},g_{\mathbb B^{n-1}})$ to $(D^{\mathrm{IV}}_n,g_{D^{\mathrm{IV}}_n})$, $n\ge 3$, independently with explicit parametrizations.
Moreover, Xiao-Yuan \cite[Theorem 1.1.]{XY16} proved that any proper holomorphic map from the complex unit $m$-ball $\mathbb B^{m}$ to $D^{\mathrm{IV}}_n$, $n\ge 3$ and $m\le n-1$, with certain boundary regularities is a holomorphic isometric embedding provided that the codimension $n-m$ of the image of the $m$-ball sufficiently small and $m\ge 4$.

Throughout the present article, we also denote by $ds_U^2$ the Bergman metric of any bounded domain $U\Subset \mathbb C^N$ and we will simply use the term ``\emph{holomorphic isometries}" for holomorphic isometric embeddings.

Let $f:(\mathbb B^n,\lambda' g_{\mathbb B^n})\to (\Omega,g_\Omega)$ be a holomorphic isometry for some positive real constant $\lambda'$, where $\Omega$ is an irreducible bounded symmetric domain.
It is well-known that any bounded symmetric domain is equivalently a Hermitian symmetric space of the non-compact type and vice versa by the Harish-Chandra Embedding Theorem (cf. \cite{Wo72, Mok89}).
Then, it follows from \cite[Lemma 3]{CM16} that $\lambda'$ is a positive integer satisfying $1\le \lambda'\le r$, where $r:=\mathrm{rank}(\Omega)$ is the rank of $\Omega$ as a Hermitian symmetric space of the non-compact type.
Throughout the present article, we will call $\lambda'$ the \emph{isometric constant} of any given holomorphic isometry from $(\mathbb B^n,\lambda' g_{\mathbb B^n})$ to $(\Omega,g_\Omega)$.
In addition, given any holomorphic isometry $F:(\Delta,k ds_\Delta^2)\to(\Delta^p,ds_{\Delta^p}^2)$, we will call $k$ the \emph{isometric constant} of $F$, where $\Delta\Subset \mathbb C$ (resp. $\Delta^p\Subset \mathbb C^p$) denotes the open unit disk (resp. open unit polydisk) in the complex plane $\mathbb C$ (resp. the complex $p$-dimensional Euclidean space $\mathbb C^p$).

In the present article, we denote by $\widehat{\bf HI}_k(\mathbb B^n,\Omega)$ the space of all holomorphic isometries from $(\mathbb B^n,kg_{\mathbb B^n})$ to $(\Omega,g_\Omega)$, where $k$ is any positive integer satisfying $1\le k\le \mathrm{rank}(\Omega)$.
Motivated by \cite{Mok16, CM16}, we continue to study the structure of holomorphic isometries from $(\mathbb B^n,k g_{\mathbb B^n})$ to $(\Omega,g_\Omega)$ for any irreducible bounded symmetric domain $\Omega$ of rank $r\ge 2$ and any positive integer $k$ such that $1\le k\le r$.

In the first part, we consider the case where $k\ge 2$ is not the minimal isometric constant and obtain a similar result as \cite[Theorem 1]{CM16} when the isometric constant $k$ is equal to $2$. As a corollary of this result, we will also show that given any irreducible bounded symmetric domain $\Omega$ of rank at most three, all holomorphic isometries from $(\mathbb B^n,k g_{\mathbb B^n})$ to $(\Omega,g_\Omega)$ arise from linear sections of the minimal embedding of the compact dual Hermitian symmetric space $X_c$ of $\Omega$.

In the second part, the aim is to generalize our results in Chan-Mok \cite{CM16} for type-$\mathrm{IV}$ domains to more general irreducible bounded symmetric domains $\Omega$ of rank $2$. 
Let $\Omega\Subset \mathbb C^N$ be an irreducible bounded symmetric domain of rank $\ge 2$ in its Harish-Chandra realization.
In \cite{Mok16}, Mok proved that if 
$f:(\mathbb B^n,g_{\mathbb B^n})\to (\Omega,g_\Omega)$ is a holomorphic isometry, then $n\le p(\Omega)+1$, where $p(\Omega):=p(X_c)=p$ is defined by $c_1(X_c)=(p+2)\delta$ for the compact dual Hermitian symmetric space $X_c$ of $\Omega$ and the positive generator $\delta$ of $H^2(X_c,\mathbb Z)\cong \mathbb Z$ (cf. \cite{Mok16, CM16}).
By slicing of the complex unit ball $\mathbb B^{p(\Omega)+1}$ with affine linear subspaces $L$ of $\mathbb C^{p(\Omega)+1}$ such that $L\cap \mathbb B^{p(\Omega)+1}$ is non-empty,
we obtain many holomorphic isometries in $\widehat{\bf HI}_1(\mathbb B^n,\Omega)$ from any given holomorphic isometry
$F\in \widehat{\bf HI}_1(\mathbb B^{p(\Omega)+1},\Omega)$ for $n\le p(\Omega)$.
It is natural to ask whether all holomorphic isometries 
in $\widehat{\bf HI}_1(\mathbb B^n,\Omega)$
were obtained in that way for each $n\le p(\Omega)$.
In the case where $\Omega=D^{\mathrm{IV}}_N$ is the type-$\mathrm{IV}$ domain for some integer $N\ge 3$, the author and Mok (cf. \cite[Theorem 2]{CM16}) have shown that the answer is affirmative.
In general, this problem remains open.
In Chan-Mok \cite{CM16}, we showed that holomorphic isometries from $(\mathbb B^n,g_{\mathbb B^n})$ to $(\Omega,g_\Omega)$ arise from linear sections of the compact dual $X_c$ of $\Omega$, where $\Omega$ is an irreducible bounded symmetric domain of rank $\ge 2$. In general, we do not know whether this gives any relation between the spaces $\widehat{\bf HI}_1(\mathbb B^n,\Omega)$ and $\widehat{\bf HI}_1(\mathbb B^m,\Omega)$ for $1\le n< m \le p(\Omega)+1$ except the case where $\Omega=D^{\mathrm{IV}}_N$, $N\ge 3$, is the type-$\mathrm{IV}$ domain (cf. \cite{CM16}).
Recall that a type-$\mathrm{IV}$ domain is of rank $2$.
On the other hand, for a rank-$r$ irreducible bounded symmetric domain $\Omega$, any holomorphic isometry from $(\mathbb B^n,rg_{\mathbb B^n})$ to $(\Omega,g_\Omega)$ is totally geodesic
by the Ahlfors-Schwarz lemma (cf. \cite[Proposition 1]{CM16}).
In particular, we only need to consider the space $\widehat{\bf HI}_1(\mathbb B^n,\Omega)$ if $\Omega$ is of rank $2$.
Therefore, it is natural to study the problem when the target bounded symmetric domain $\Omega$ is of rank $2$.

In short, we will generalize the method in Chan-Mok \cite{CM16} for classifying images of all holomorphic isometries in $\widehat{\bf HI}_1(\mathbb B^n,D^{\mathrm{IV}}_N)$ for $N\ge 3$ and $n\ge 1$ to the study of images of holomorphic isometries in 
$\widehat{\bf HI}_1(\mathbb B^n,\Omega)$ for $1\le n\le n_0$ and certain irreducible bounded symmetric domains $\Omega\Subset\mathbb C^N$ of rank $2$ in their Harish-Chandra realizations, where $n_0=n_0(\Omega)>1$ is some integer depending on $\Omega$.
One of the key ingredients is the use of the explicit form of the polynomial $h_\Omega(z,z)$ as mentioned in \cite[Remark 1]{CM16}.
On the other hand, the author has found that the relation between $h_\Omega(z,\xi)$ and $\iota|_{\mathbb C^N}$ obtained from \cite{Lo77} has been written down explicitly by Fang-Huang-Xiao \cite{FHX16} for each irreducible bounded symmetric domain $\Omega$, where $\iota:X_c\hookrightarrow \mathbb P\big(\Gamma(X_c,\mathcal O(1))^*\big)\cong \mathbb P^{N'}$ is the minimal embedding.
Here $\mathcal O(1)$ is the positive generator of the Picard group $\mathrm{Pic}(X_c)\cong \mathbb Z$ of the compact dual $X_c$ of $\Omega$ and $\mathbb C^N\subset X_c$ is identified as a dense open subset of $X_c$ by the Harish-Chandra Embedding Theorem (cf. \cite{Mok89, Mok16, CM16}).
We refer the readers to \cite[Section 2.1]{CM16} for the background of bounded symmetric domains and their compact dual Hermitian symmetric spaces.

The main results in the first part of the present article are as follows.
\begin{proposition}\label{ProBoundofDimIsoConstGT2}
Let $\Omega\Subset \mathbb C^N$ be an irreducible bounded symmetric domain of rank $\ge 2$ in its Harish-Chandra realization and $\lambda'\ge 2$ be an integer.
If $\widehat{\bf HI}_{\lambda'}(\mathbb B^n,\Omega)\neq \varnothing$, then we have $n\le n_{\lambda'-1}(\Omega)$, where $n_{\lambda'-1}(\Omega)$ is the $(\lambda'-1)$-th null dimension of $\Omega$ (cf. \cite[p.\;253]{Mok89} and Section \ref{sec:2.1}).
\end{proposition}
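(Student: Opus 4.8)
The plan is to normalize $f$ so that $f(0)=0$ and then extract the bound on $n$ from the $2$-jet of $f$ at the origin. Given any $f\in\widehat{\bf HI}_{\lambda'}(\mathbb B^n,\Omega)$, I would first compose $f$ with an automorphism of $\mathbb B^n$ carrying $0$ to a preimage of a base point and with an automorphism of $\Omega$ carrying the corresponding image back to $0$; these are isometries and leave $n$ unchanged, so we may assume $f(0)=0$. Write the K\"ahler potential of $g_\Omega$ as $-\log h_\Omega(z,\bar z)$, where $h_\Omega$ is the polynomial normalized so that the minimal disks of $\Omega$ have Gaussian curvature $-2$ (as in \cite[Remark\;1]{CM16}); then $h_\Omega(z,\bar z)=\sum_{m=0}^{r}(-1)^m N_m(z,\bar z)$ with $r=\mathrm{rank}(\Omega)$, $N_0\equiv1$, each $N_m$ bihomogeneous of bidegree $(m,m)$, and $N_1(z,\bar z)=\langle z,z\rangle$ equal to the flat Hermitian form $g_\Omega(0)$ on $T_0(\Omega)\cong\mathbb C^N$ (the normalization makes these proportional, and they agree on a minimal idempotent). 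Since $f(0)=0$ and neither $h_\Omega(z,\bar z)$ nor $1-\|w\|^2$ has nonzero pure holomorphic or pure anti-holomorphic terms, the identity $f^*g_\Omega=\lambda'\,g_{\mathbb B^n}$ upgrades to the functional equation $h_\Omega(f(w),\overline{f(w)})=(1-\|w\|^2)^{\lambda'}$, exactly as in the proof of \cite[Theorem\;1]{CM16}.

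Next I would compare Taylor coefficients. Writing $f=f^{(1)}+f^{(2)}+\cdots$ into homogeneous parts, the bidegree-$(1,1)$ identity gives $\langle f^{(1)}(w),f^{(1)}(w)\rangle=\lambda'\|w\|^2$, so $A:=df_0$ maps $\mathbb C^n$ onto a subspace $W:=A(\mathbb C^n)\subseteq T_0(\Omega)$ with $\dim_{\mathbb C}W=n$. The only contributions to bidegree $(2,2)$ on the left are $N_2\big(f^{(1)}(w),\overline{f^{(1)}(w)}\big)$ and the $(2,2)$-part $-\|f^{(2)}(w)\|^2$ of $-\langle f(w),f(w)\rangle$, while the right side contributes $\binom{\lambda'}{2}\|w\|^4$; hence for every $u=f^{(1)}(w)\in W$, using $\langle u,u\rangle=\lambda'\|w\|^2$,
\[
N_2(u,\bar u)=\frac{\lambda'-1}{2\lambda'}\,\langle u,u\rangle^2+\big\|f^{(2)}(w)\big\|^2\ \ge\ \frac{\lambda'-1}{2\lambda'}\,\langle u,u\rangle^2 .
\]
Equivalently, this is the Gauss equation for $f$ read off at one point: the holomorphic sectional curvature of $(\Omega,g_\Omega)$ at $0$ satisfies $H_\Omega(u)=-2+4\,N_2(u,\bar u)/\langle u,u\rangle^2\ge-\tfrac{2}{\lambda'}$ for every nonzero $u\in W$.

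Finally I would convert this inequality into the null-dimension bound. Using the explicit form of $h_\Omega$ — equivalently, the description of $N_2$ through the minimal embedding $\iota$ of $X_c$ (\cite{FHX16}; see Section \ref{Sec:2}) — the condition $H_\Omega(u)\ge-\tfrac2{\lambda'}$ forces every nonzero $u\in W$ to have rank $\ge\lambda'$ as an element of the Hermitian Jordan triple system of $\Omega$, together with the quantitative balancing of its eigenvalues that cuts out the null cone associated with $\lambda'$. A complex linear subspace of $T_0(\Omega)$ lying in that cone has, by the definition of the $(\lambda'-1)$-th null dimension (cf. \cite[p.\;253]{Mok89} and Section \ref{sec:2.1}), dimension at most $n_{\lambda'-1}(\Omega)$; since $\dim_{\mathbb C}W=n$, this gives $n\le n_{\lambda'-1}(\Omega)$.

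The main obstacle is this last step: matching the maximal dimension of a linear subspace of $T_0(\Omega)$ on which $H_\Omega\ge-\tfrac{2}{\lambda'}$ with the null dimension $n_{\lambda'-1}(\Omega)$, which requires the explicit polynomial $h_\Omega$ for each irreducible $\Omega$ and the classification of rank-restricted linear subspaces of its Jordan triple system; one should also verify that the $2$-jet of $f$ already yields the sharp bound for $\lambda'\ge2$. (For $\lambda'=1$ the displayed inequality is vacuous, since $N_2\ge0$ identically; that is precisely why the case $\lambda'=1$, due to Mok, needs a different, finer argument and is excluded from the present statement.)
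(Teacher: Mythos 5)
Your normalization, the functional equation, and the comparison of bidegree-$(2,2)$ coefficients are all correct, and the resulting inequality $H_\Omega(u)\ge -2/\lambda'$ for all nonzero $u\in W:=df_{\bf 0}(\mathbb C^n)$ is exactly the pointwise Gauss equation that the paper uses (the paper phrases it at an arbitrary $y\in f(\mathbb B^n)$, but the origin suffices). Combined with the curvature bound $R_{\beta\overline\beta\beta\overline\beta}(\Omega,g_\Omega)\le -2/s$ for a unit vector of rank $s$ (Section \ref{Sec:2.1.1}), this correctly forces every nonzero $u\in W$ to have rank $\ge\lambda'$, i.e.\ $\mathbb P(W)\cap\mathcal S_{\lambda'-1,{\bf 0}}(\Omega)=\varnothing$. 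Up to this point your argument is sound and essentially the paper's.

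The genuine gap is the final step, which you yourself flag as ``the main obstacle'': you assert that a subspace constrained in this way has dimension at most $n_{\lambda'-1}(\Omega)$ ``by the definition of the $(\lambda'-1)$-th null dimension,'' and you suggest closing the gap by a case-by-case classification of rank-restricted subspaces of the Jordan triple system. Neither is right. First, the definition of $n_k(\Omega)$ is the dimension of the null space $\mathcal N_v$ of a rank-$k$ vector; it says nothing directly about the maximal dimension of a linear subspace \emph{avoiding} the cone of vectors of rank $\le\lambda'-1$ (note also that $W$ avoids this cone, it does not lie in it, contrary to your phrasing). Second, no explicit per-domain analysis is needed: the correct bridge is Mok's dimension formula $\dim_{\mathbb C}\mathcal S_{\lambda'-1,{\bf 0}}(\Omega)=N-n_{\lambda'-1}(\Omega)-1$ for the $(\lambda'-1)$-th characteristic projective subvariety, which is a nonempty irreducible projective subvariety of $\mathbb P(T_{\bf 0}(\Omega))\cong\mathbb P^{N-1}$, together with the intersection-dimension theorem in projective space: if two projective subvarieties of $\mathbb P^{N-1}$ have dimensions summing to at least $N-1$, they must meet. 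Since $\mathbb P(W)\cong\mathbb P^{n-1}$ is disjoint from $\mathcal S_{\lambda'-1,{\bf 0}}(\Omega)$, one gets $(n-1)+(N-n_{\lambda'-1}(\Omega)-1)<N-1$, i.e.\ $n\le n_{\lambda'-1}(\Omega)$. Without this projective-geometric input (or an equivalent substitute) your argument does not reach the stated bound.
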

\begin{theorem}\label{LinearSection_Ball_IsoConsteq2}
Let $\Omega\Subset \mathbb C^N$ be an irreducible bounded symmetric domain of rank $\ge 2$ in its Harish-Chandra realization.
We have the standard embeddings $\Omega\Subset \mathbb C^N\subset X_c$ of $\Omega$ as a bounded domain and its Borel embedding $\Omega\subset X_c$ as an open subset of its compact dual Hermitian symmetric space $X_c$ (see \cite[Theorem 1]{CM16}).
If $f\in \widehat{\bf HI}_2(\mathbb B^n,\Omega)$,
then $f(\mathbb B^n)$ is an irreducible component of $\mathscr V:=\mathscr V'\cap \Omega$ for some affine-algebraic subvariety $\mathscr V'\subset \mathbb C^N$ such that $\iota(\mathscr V)=P\cap \iota(\Omega)$,
where $P\subseteq \mathbb P\big(\Gamma(X_c,\mathcal O(1))^*\big)\cong \mathbb P^{N'}$ is some projective linear subspace and $\iota:X_c\hookrightarrow \mathbb P\big(\Gamma(X_c,\mathcal O(1))^*\big)\cong \mathbb P^{N'}$ is the minimal embedding.
\end{theorem}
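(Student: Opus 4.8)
The plan is to follow the strategy that worked for the isometric constant $1$ case in \cite[Theorem 1]{CM16}, adapting each step to handle the extra factor coming from $k=2$. The starting point is the functional equation for the defining polynomial: if $f\in\widehat{\bf HI}_2(\mathbb B^n,\Omega)$, write $f=(f_1,\dots,f_N)$ in the Harish-Chandra coordinates, and recall the identity relating the Bergman kernels under a holomorphic isometry, namely that $h_\Omega(f(w),f(w))$ equals a power of $(1-\|w\|^2)$ determined by the isometric constant. Concretely, the isometry condition with constant $2$ forces
\begin{equation}\label{eq:funceq}
h_\Omega\bigl(f(w),f(w)\bigr)=\bigl(1-\|w\|^2\bigr)^{2},
\end{equation}
where $h_\Omega(z,\xi)$ is the polarization of the polynomial $h_\Omega(z,z)$ described in \cite[Remark 1]{CM16}. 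Using the explicit relation between $h_\Omega(z,\xi)$ and the minimal embedding $\iota|_{\mathbb C^N}$ recorded by Fang-Huang-Xiao \cite{FHX16}, one can rewrite \eqref{eq:funceq} in terms of the homogeneous coordinates $[\iota_0\circ f:\cdots:\iota_{N'}\circ f]$ of $\iota\circ f$, obtaining an identity of the form $\sum_{j=0}^{N'}(\iota_j\circ f)(w)\overline{(\iota_j\circ f)(w)}=(1-\|w\|^2)^{-?}\cdot(\text{something})$; the key structural feature is that the left side is a squared norm of a holomorphic $\mathbb C^{N'+1}$-valued map.

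The second step is to invoke the polarization/linear-algebra lemma (the analogue of the argument in \cite[Section 3]{CM16}, ultimately a version of Calabi's rigidity via the positivity of the relevant Hermitian form): an identity expressing one squared norm of holomorphic functions as another squared norm forces the two holomorphic maps to differ by a constant unitary (more precisely a linear isometry of the relevant inner-product spaces, possibly after accounting for the common factor $(1-\|w\|^2)$). Comparing the monomials $\bigl(1-\|w\|^2\bigr)^2$ expansion with the standard expression $\|\iota(z)\|^2 = h_\Omega(z,z)^{-1}\cdot(\cdots)$, one deduces that the composite $\iota\circ f:\mathbb B^n\to\mathbb P^{N'}$, followed by the standard affine chart, lands inside an affine-linear subspace of $\mathbb C^{N'}$; equivalently $\iota(f(\mathbb B^n))$ is contained in $P\cap\iota(\Omega)$ for a projective linear subspace $P$. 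The dimension count here is exactly where Proposition \ref{ProBoundofDimIsoConstGT2} enters: the null-dimension bound $n\le n_{1}(\Omega)$ guarantees that $n$ is small enough that the relevant Hermitian forms have the expected rank, so the "squared norm equals squared norm" matching is non-degenerate and the linear section $P$ is well-defined.

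The third step is to upgrade the inclusion $\iota(f(\mathbb B^n))\subseteq P\cap\iota(\Omega)$ to the statement that $f(\mathbb B^n)$ is an \emph{irreducible component} of $\mathscr V=\mathscr V'\cap\Omega$ with $\iota(\mathscr V)=P\cap\iota(\Omega)$. For this one takes $\mathscr V'$ to be the Zariski closure in $\mathbb C^N$ of $\iota^{-1}(P)\cap\mathbb C^N$ (using the Harish-Chandra identification $\mathbb C^N\subset X_c$), checks that $\iota$ identifies $\mathscr V=\mathscr V'\cap\Omega$ with $P\cap\iota(\Omega)$, and then argues that $f(\mathbb B^n)$, being the image of a non-constant holomorphic map that is a local biholomorphism onto its image of the correct dimension $n$, must be open in some irreducible component; since $f(\mathbb B^n)$ is already a closed complex-analytic subset of $\Omega$ (being the image of a proper holomorphic isometric embedding — here one uses that holomorphic isometries for these metrics are proper, which follows from the metric comparison and is used in \cite{CM16}), it coincides with that component. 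The main obstacle, I expect, is Step 2: one must extract from the explicit $h_\Omega$ of Fang-Huang-Xiao the precise combinatorial form of the polynomial identity and verify that the rank of the associated Hermitian matrix is exactly what is needed for the rigidity lemma to force linearity — in the constant-$1$ case this was delicate (see \cite[Section 3]{CM16}), and with the extra power $(1-\|w\|^2)^2$ the bookkeeping of the induced decomposition of the space of holomorphic functions is heavier. A secondary subtlety is ensuring that $P$ is genuinely a \emph{linear} subspace rather than merely an algebraic one cut out by quadrics; this is where one exploits that the extra factor is a perfect square $\bigl((1-\|w\|^2)\bigr)^2$ together with the structure of the first canonical embedding, so that the "square root" of the norm identity can be taken holomorphically.
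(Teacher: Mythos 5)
Your overall strategy---rearranging the functional equation $h_\Omega(f(w),f(w))=(1-\lVert w\rVert^2)^2$ into an identity between squared norms of holomorphic vector-valued maps and invoking Calabi's rigidity (Lemma \ref{LemCaSOS})---is indeed how the paper begins, but there is a genuine gap at exactly the point you flag as the ``main obstacle'', namely the linearity of $P$. After moving the negative terms across, Calabi's lemma produces a unitary ${\bf U}$ matching the vector $(\sqrt2\,w,\,{\bf G}^{(2)}(f(w)),\,{\bf 0})$ with the vector $(\Xi_1(w),\ldots,\Xi_{m_0}(w),\,{\bf G}^{(1)}(f(w)),\,{\bf 0})$, where the $\Xi_l$ are the degree-two monomials arising from $\lVert w\rVert^4$. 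The rows of this relation whose outputs are the $\Xi_l(w)$ are quadratic in $w$, hence quadratic in $f(w)$, and such quadrics are in general \emph{not} linear combinations of the components of $\iota$; your proposed remedy of ``taking a holomorphic square root of the norm identity'' is not an argument and does not yield linear equations. What actually works is (i) discarding those rows entirely, retaining only the block of ${\bf U}$ whose outputs are ${\bf G}^{(1)}(f(w))$ and zeros, and (ii) eliminating the residual dependence on $w$ in the \emph{input} vector by differentiating the polarized functional equation at $\zeta={\bf 0}$, which gives $2w=\overline{(Jf)({\bf 0})}^T{\bf f}(w)$, so that $w$ is a linear function of $f(w)$. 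Substituting produces defining equations that are linear in $z_1,\ldots,z_N,{\bf G}^{(1)}(z),{\bf G}^{(2)}(z)$, i.e.\ linear in the homogeneous coordinates of $\iota$; this elimination is the heart of the proof and is missing from your outline. Relatedly, Proposition \ref{ProBoundofDimIsoConstGT2} plays no role here: no non-degeneracy of Hermitian forms via the null-dimension bound is needed or used.

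There is a second gap in your component argument. Knowing that $f(\mathbb B^n)$ is an $n$-dimensional closed irreducible subvariety contained in an irreducible component $S$ of $\mathscr V$ does not make it equal to $S$ unless $\dim S=n$: a proper closed analytic subset of an irreducible $S$ is never open in $S$, so your ``open in some component, hence equal to it'' step presupposes precisely the dimension bound it is meant to establish. The paper supplies this bound by computing the tangential dimension of $\mathscr V$ at ${\bf 0}$: the linear part in $z$ of the defining equations has coefficient matrix ${1\over 2}(Jf)({\bf 0})\overline{(Jf)({\bf 0})}^T-{\bf I_N}$, whose rank is shown to equal $N-n$ by elementary rank inequalities, whence $\mathrm{tdim}_{\bf 0}\mathscr V\le n\le\dim_{\bf 0}\mathscr V$ and $\mathscr V$ is smooth of dimension $n$ at ${\bf 0}$. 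Some substitute for this computation is required before the irreducible-component conclusion can be drawn.
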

\begin{theorem}\label{LS_rk_2or3}
Let $\Omega\Subset \mathbb C^N$ be an irreducible bounded symmetric domain of rank $r$ in its Harish-Chandra realization, where $2\le r\le 3$.
We have the standard embeddings $\Omega\Subset \mathbb C^N\subset X_c$ of $\Omega$ as a bounded domain and its Borel embedding $\Omega\subset X_c$ as an open subset of its compact dual Hermitian symmetric space $X_c$ (see \cite[Theorem 1]{CM16}).
If $f\in \widehat{\bf HI}_{\lambda'}(\mathbb B^n,\Omega)$, then $f(\mathbb B^n)$ is an irreducible component of some complex-analytic subvariety $\mathscr V \subset \Omega$ satisfying $\iota(\mathscr V)=P\cap \iota(\Omega)$, where $\iota:X_c\hookrightarrow \mathbb P\big(\Gamma(X_c,\mathcal O(1))^*\big)\cong\mathbb P^{N'}$ is the minimal embedding and $P\subseteq \mathbb P\big(\Gamma(X_c,\mathcal O(1))^*\big)\cong\mathbb P^{N'}$ is some projective linear subspace.
\end{theorem}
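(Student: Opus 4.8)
The plan is to reduce Theorem \ref{LS_rk_2or3} to Theorem \ref{LinearSection_Ball_IsoConsteq2}, to \cite[Theorem 1]{CM16}, and to an analysis of the totally geodesic case, by running a trichotomy on the isometric constant $\lambda'$ of $f$. Recall from \cite[Lemma 3]{CM16} that $\lambda'$ is a positive integer with $1\le\lambda'\le r$; since $2\le r\le 3$ we have $\lambda'\in\{1,2,3\}$. When $\lambda'=1$ the assertion is exactly \cite[Theorem 1]{CM16}. When $\lambda'=2$ the assertion is exactly Theorem \ref{LinearSection_Ball_IsoConsteq2} (that statement applies to every $\Omega$ of rank $\ge 2$, in particular to our $\Omega$ of rank $r\in\{2,3\}$, and the variety $\mathscr V=\mathscr V'\cap\Omega$ produced there is in particular a complex-analytic subvariety of $\Omega$). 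So the only case requiring new work is $\lambda'=3$, which forces $r=3$, i.e. $\lambda'=\mathrm{rank}(\Omega)$.

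In this remaining case, \cite[Proposition 1]{CM16} (the Ahlfors--Schwarz argument) shows that $f\colon(\mathbb B^n,3g_{\mathbb B^n})\to(\Omega,g_\Omega)$ is totally geodesic. Hence $S:=f(\mathbb B^n)$ is a connected complete totally geodesic complex submanifold of $\Omega$, and in particular a closed, smooth, irreducible complex-analytic subvariety of $\Omega$. Composing $f$ with a suitable element of $\mathrm{Aut}(\Omega)$ --- which extends to an automorphism of $X_c$ and therefore carries linear sections of $\iota(X_c)$ to linear sections --- we may assume $0\in S$. By the structure theory of totally geodesic complex submanifolds of bounded symmetric domains, $S=\Omega\cap V$ for a complex-linear subspace $V\subseteq\mathbb C^N$; its compact dual $X_c'$, realized as a totally geodesic Hermitian symmetric submanifold of $X_c$ through the origin, then satisfies $X_c'\cap\Omega=S$.

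Finally I would take $P\subseteq\mathbb P\big(\Gamma(X_c,\mathcal O(1))^*\big)$ to be the projective-linear span of $\iota(X_c')$ and set $\mathscr V:=\iota^{-1}(P)\cap\Omega$, so that automatically $\iota(\mathscr V)=P\cap\iota(\Omega)$ and $S\subseteq\mathscr V$. The proof then comes down to the single point that $\iota(X_c')=P\cap\iota(X_c)$, for then $\mathscr V=X_c'\cap\Omega=S$ and $S$ is trivially an irreducible component of $\mathscr V$. This is the step I expect to be the main obstacle: one must know that a totally geodesic Hermitian symmetric submanifold of $X_c$ is cut out linearly under the minimal embedding $\iota$. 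I would establish it using the explicit description of $\iota|_{\mathbb C^N}$ in terms of the polynomial $h_\Omega$ recorded by \cite{Lo77, FHX16}, together with the compatibility $h_S=h_\Omega|_{V\times V}$ valid for a totally geodesic $S=\Omega\cap V$, which identifies the span of the restricted coordinate functions of $\iota|_{\mathbb C^N}$ on $V$ with the corresponding space for $S$; alternatively one may invoke the classical fact that totally geodesic sub-Hermitian-symmetric spaces of $X_c$ are standardly embedded, and hence are linear sections of the first canonical embedding (cf. \cite{Mok89}). For Theorem \ref{LS_rk_2or3} this verification is needed only when $\mathrm{rank}(X_c)\le 3$, where the totally geodesic Hermitian symmetric submanifolds form a short list that can be checked directly from the explicit forms of $h_\Omega$.
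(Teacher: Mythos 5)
Your overall structure is the same as the paper's: split on the integer $\lambda'\in\{1,\dots,r\}$, quote \cite[Theorem 1]{CM16} for $\lambda'=1$, quote Theorem \ref{LinearSection_Ball_IsoConsteq2} for $\lambda'=2$, and use total geodesy via \cite[Proposition 1]{CM16} when $\lambda'=r$. (The paper handles $r=\lambda'=2$ by total geodesy rather than by Theorem \ref{LinearSection_Ball_IsoConsteq2}, but your choice is equally valid since that theorem covers all rank $\ge 2$ targets.) The one place you diverge is the totally geodesic case, and there you have manufactured an obstacle that the statement does not require you to overcome. You insist on taking $P$ to be the linear span of $\iota(X_c')$ and then need the global identity $\iota(X_c')=P\cap\iota(X_c)$, which you correctly flag as nontrivial and leave unproved. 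But the theorem only asks for $\iota(\mathscr V)=P\cap\iota(\Omega)$, i.e.\ the intersection is taken with $\iota(\Omega)$, not with $\iota(X_c)$. Once you know (as in \cite{Mok12}, which the paper cites) that the totally geodesic image through $\mathbf 0$ is $S=V\cap\Omega$ for an affine-linear $V\subseteq\mathbb C^N$, you are done immediately: since $G_j(z)=z_j$ for $1\le j\le N$ are among the components of $\iota(z)=[1,G_1(z),\dots,G_{N'}(z)]$ and $\xi_0=1\ne 0$ on $\iota(\mathbb C^N)$, the affine-linear equations cutting out $V$ homogenize to linear equations in $\xi_0,\dots,\xi_N$ defining a projective linear subspace $P$ with $\iota(V\cap\Omega)=P\cap\iota(\Omega)$, and $\mathscr V:=V\cap\Omega=S$ is irreducible (convex and smooth), so $f(\mathbb B^n)$ is trivially its unique irreducible component. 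Replacing your final paragraph by this observation closes the only gap and makes your argument coincide with the paper's.
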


The main result of the second part is the following.
\begin{theorem}\label{Thm_Slicing_for_special_cases1}
Let $\Omega\Subset \mathbb C^N$ be an irreducible bounded symmetric domain of rank $2$ in its Harish-Chandra realization satisfying
$2N>N'+1$, where $N'$ $:=$ $\dim_{\mathbb C}$ $\mathbb P$$\big(\Gamma(X_c,\mathcal O(1))^*\big)$ and $X_c$ is the compact dual Hermitian symmetric space of $\Omega$.
Denote by $n_0(\Omega):=2N-N'$.
For $1\le n\le n_0(\Omega)-1$, if 
$f:(\mathbb B^n,g_{\mathbb B^n})\to (\Omega,g_\Omega)$ is a holomorphic isometric embedding,
then $f=F\circ \rho$ 
for some holomorphic isometric embeddings
$F:(\mathbb B^{n_0(\Omega)},g_{\mathbb B^{n_0(\Omega)}})\to (\Omega,g_\Omega)$ and 
$\rho:(\mathbb B^n,g_{\mathbb B^n})\to (\mathbb B^{n_0(\Omega)},g_{\mathbb B^{n_0(\Omega)}})$.
\end{theorem}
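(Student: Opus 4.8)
The plan is to adapt the strategy of Chan--Mok for type-$\mathrm{IV}$ domains \cite[Theorem 2]{CM16}, replacing the single quadric $z\cdot z$ appearing in $h_{D^{\mathrm{IV}}_N}$ by the full vector of degree-$2$ components of the minimal embedding $\iota|_{\mathbb C^N}$. After composing $f$ with automorphisms of $\mathbb B^n$ and of $\Omega$ we may assume $f(0)=0$; by Calabi's theory of the diastasis function \cite{Ca53} the isometry condition $f^*g_\Omega=g_{\mathbb B^n}$ then becomes, after polarization, the functional equation
\[
h_\Omega\big(f(z),\overline{f(w)}\big)=1-\langle z,w\rangle ,\qquad z,w\in\mathbb B^n .
\]
Since $\Omega$ has rank $2$, the polynomial $h_\Omega(\zeta,\bar\xi)$ has bidegrees only $(0,0),(1,1),(2,2)$ (see \cite[Remark 1]{CM16}); write $h_\Omega(\zeta,\bar\xi)=1-A(\zeta,\xi)+B(\zeta,\xi)$, where $A$ (resp.\ $B$) is the bidegree-$(1,1)$ (resp.\ $(2,2)$) part and $A$ is positive-definite. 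By the description of $\iota|_{\mathbb C^N}$ in terms of $h_\Omega$ \cite{Lo77,FHX16}, one may write $B(\zeta,\xi)=H\big(Q(\zeta),Q(\xi)\big)$ for the vector $Q=(q_1,\dots,q_{N'-N})\colon\mathbb C^N\to\mathbb C^{N'-N}$ of degree-$2$ components of $\iota|_{\mathbb C^N}$ and a Hermitian form $H$ that is positive-definite on the span of the $q_i$.

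First I would extract the homogeneous structure of $f$. Writing $f=f_1+f_2+\cdots$ in homogeneous Taylor components and comparing bidegrees in the functional equation: the $(1,1)$-term gives $A\big(f_1(z),f_1(w)\big)=\langle z,w\rangle$, so $f_1\colon\mathbb C^n\to\mathbb C^N$ is linear of rank $n$ onto a subspace $V_1$; the $(1,d)$-terms with $d\ge 2$ give $A\big(f_1(z),f_d(w)\big)=0$, so each $f_d(\mathbb C^n)$ is $A$-orthogonal to $V_1$ and the $V_1$-component of $f$ equals $f_1$; and the $(2,2)$-term gives $A\big(f_2(z),f_2(w)\big)=H\big(Q(f_1(z)),Q(f_1(w))\big)$. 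Since both sides are positive-semidefinite kernels, the usual rank argument on Gram matrices of vector-valued polynomials yields $f_2=\Lambda\circ(Q\circ f_1)$ for a linear partial isometry $\Lambda$; in particular the part of $f$ beyond the linear term already comes from the quadrics $Q$, and $\dim\operatorname{span}(f_2)\le N'-N=N-n_0(\Omega)$. After a further automorphism in the isotropy group $K$ (the admissible $V_1$ forming a single $K$-orbit, by the compatibility with $Q$ just obtained together with Mok's structure theory), one may take, in suitable Harish-Chandra coordinates $\mathbb C^N=\mathbb C^n\oplus\mathbb C^{N-n}$ with $V_1=\mathbb C^n\oplus 0$, $f(z)=(z,g(z))$ where $g\colon\mathbb B^n\to\mathbb C^{N-n}$ vanishes to order $\ge2$, $g_2=\Lambda\circ Q_0$ with $Q_0:=Q|_{\mathbb C^n\oplus 0}$, and the functional equation reduces to $B\big((z,g(z)),\overline{(w,g(w))}\big)=A'\big(g(z),g(w)\big)$ for the fixed positive-definite form $A':=A|_{\mathbb C^{N-n}}$.

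The heart of the argument is to show that $f$ is the restriction, via a linear slice $\mathbb B^n\hookrightarrow\mathbb B^{n_0(\Omega)}$, of a distinguished $F\in\widehat{\bf HI}_1(\mathbb B^{n_0(\Omega)},\Omega)$. By Theorem~\ref{LS_rk_2or3} (applied with $\lambda'=1$ and $r=2$), $\iota(f(\mathbb B^n))$ spans a projective linear subspace $P$ and $f(\mathbb B^n)$ is an irreducible component of $\iota^{-1}\big(P\cap\iota(\Omega)\big)\cap\Omega$; it then suffices to find a projective linear subspace $P'\supseteq P$ with $\iota^{-1}\big(P'\cap\iota(\Omega)\big)\cap\Omega$ of dimension exactly $n_0(\Omega)=2N-N'$ and equal to $F(\mathbb B^{n_0(\Omega)})$ for some $F\in\widehat{\bf HI}_1(\mathbb B^{n_0(\Omega)},\Omega)$, with $f(\mathbb B^n)\subseteq F(\mathbb B^{n_0(\Omega)})$. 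Concretely this means continuing the bidegree comparison for $f(z)=(z,g(z))$ to control $g_3,g_4,\dots$ and to show, using the explicit $h_\Omega$ (hence $Q$) for each of the finitely many rank-$2$ domains satisfying $2N>N'+1$ (besides type $\mathrm{IV}$, the types $\mathrm I_{p,2}$ for small $p$, $\mathrm{II}_5$, and the exceptional domain $\mathrm{E}_{\mathrm{III}}$), that every component of the higher homogeneous terms of $f$ already lies in $V_1\oplus\operatorname{span}(g_2)$ and that $g$ is recovered from the same quadratic data; this is the exact analogue of the description of the maximal-dimensional balls $\mathbb B^{N-1}\subset D^{\mathrm{IV}}_N$ (for which $N-1=2N-N'$ as well), and it is here that the hypothesis $2N>N'+1$, i.e.\ $n_0(\Omega)>1$, and the explicit structure of $h_\Omega$ are used essentially. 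Once $F$ is produced, $\rho:=F^{-1}\circ f\colon\mathbb B^n\to\mathbb B^{n_0(\Omega)}$ is a well-defined holomorphic embedding, and from $F^*g_\Omega=g_{\mathbb B^{n_0(\Omega)}}$ and $f^*g_\Omega=g_{\mathbb B^n}$ we get $\rho^*g_{\mathbb B^{n_0(\Omega)}}=g_{\mathbb B^n}$; hence $\rho$ is a holomorphic isometric embedding and $f=F\circ\rho$, as desired.

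I expect the main obstacle to be exactly the construction of $F$, i.e.\ proving that the higher-order homogeneous terms of $f$ introduce no direction beyond $V_1\oplus\operatorname{span}(g_2)$ and that the linear section of $\iota(\Omega)$ containing $f(\mathbb B^n)$ extends to an $n_0(\Omega)$-dimensional one carrying a holomorphic isometry from the ball. This is the technically heaviest step and, as for type $\mathrm{IV}$ in \cite{CM16,XY16,UWZ16}, it relies on the explicit polynomials $h_\Omega$ (equivalently the quadrics $Q$) for the finitely many relevant rank-$2$ domains together with the numerical hypothesis $2N>N'+1$. A secondary, more routine, point is the normalization of the linear part $f_1$, i.e.\ the transitivity of $K$ on the admissible subspaces $V_1\subset\mathbb C^N$.
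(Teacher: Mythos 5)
Your overall strategy is the right one (produce an $F\in\widehat{\bf HI}_1(\mathbb B^{n_0(\Omega)},\Omega)$ with $f(\mathbb B^n)\subseteq F(\mathbb B^{n_0(\Omega)})$ and then factor), and your closing step is correct: once such an $F$ exists, the inclusion of one ball image into the other is totally geodesic because both carry the induced metric of constant holomorphic sectional curvature $-2$, so $\rho=F^{-1}\circ f$ is an isometric embedding of balls. But the step you yourself flag as ``the main obstacle'' --- the actual construction of $F$ --- is genuinely missing. Your plan for it (control $g_3,g_4,\dots$ by bidegree comparison of homogeneous Taylor components, argue that all higher terms stay in $V_1\oplus\operatorname{span}(g_2)$, invoke transitivity of $K$ on admissible $V_1$, and verify everything case by case from the explicit $h_\Omega$ of each relevant domain) is only a programme, and several of its ingredients (the $K$-orbit claim, the confinement of the higher-order terms) are asserted rather than proved. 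As it stands the proof does not go through.

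The paper closes this gap without any case-by-case work or Taylor-coefficient analysis, by applying Calabi's lemma (Lemma \ref{LemCaSOS}) to the \emph{unpolarized} functional equation in the form
\[
\sum_{j=1}^N |f^j(w)|^2=\sum_{l=1}^n|w_l|^2+\sum_{l=1}^{N'-N}|\hat G_l(f(w))|^2 ,
\]
which is legitimate precisely because $n+(N'-N)\le N$, i.e.\ $n\le 2N-N'=n_0(\Omega)$. This yields a single unitary ${\bf U}\in U(N)$ with ${\bf U}\,{\bf f}(w)=\big(w,\;{\bf G}(f(w)),\;{\bf 0}_{(2N-N'-n)\times 1}\big)^T$, so that $f(\mathbb B^n)$ is the component through ${\bf 0}$ of the variety $W_{\bf U'}$ cut out by the bottom $N-n$ rows ${\bf U'}$ of ${\bf U}$ (Proposition \ref{CharCUBintoIrrBSDrk2}). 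The enlargement is then purely linear-algebraic: split ${\bf U'}$ into the $N'-N$ rows whose equations read ${\bf U'_1}z={\bf G}(z)$ and the remaining $2N-N'-n\ge 1$ rows whose equations read ${\bf U'_2}z={\bf 0}$, and simply \emph{discard} the latter. Since ${\bf U'_1}\overline{{\bf U'_1}}^T={\bf I}_{N'-N}$, the converse direction of Proposition \ref{CharCUBintoIrrBSDrk2} (which rests on Mok's extension theorem for germs of holomorphic isometries, not on explicit classification) shows that the component of $W_{\bf U'_1}$ through ${\bf 0}$ is $F(\mathbb B^{n_0(\Omega)})$ for some $F\in\widehat{\bf HI}_1(\mathbb B^{n_0(\Omega)},\Omega)$, and it visibly contains $f(\mathbb B^n)$. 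If you want to salvage your write-up, the key realization to import is that Calabi's rigidity should be applied to the full maps rather than degree by degree; that single move eliminates both the higher-order analysis and the dependence on the explicit form of $h_\Omega$ beyond the numerical inequality $2N>N'+1$.
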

\begin{remark}\text{}
\begin{enumerate}
\item[(1)]
Theorem \ref{Thm_Slicing_for_special_cases1} actually asserts that any holomorphic isometric embedding $f\in \widehat{\bf HI}_1(\mathbb B^n,\Omega)$, $1\le n\le n_0(\Omega)-1$, extends to a holomorphic isometric embedding $F\in \widehat{\bf HI}_1(\mathbb B^{n_0(\Omega)},\Omega)$, where $\Omega\Subset \mathbb C^N$ is a rank-$2$ irreducible bounded symmetric domain satisfying $2N>N'+1$.
\item[(2)] We will show that for such irreducible bounded symmetric domains $\Omega$, $n_0(\Omega)$ $=$ $p(\Omega)+1$ only if $\Omega\cong D^{\mathrm{IV}}_N$ is the type-$\mathrm{IV}$ domain for some $N\ge 3$.
Therefore, one may regard this theorem as a generalization of Theorem 2 in Chan-Mok \cite{CM16} to holomorphic isometric embeddings from $(\mathbb B^n,g_{\mathbb B^n})$ to $(\Omega,g_\Omega)$ for any rank-$2$ irreducible bounded symmetric domain $\Omega$ satisfying $n_0(\Omega)>1$ and $1\le n\le n_0(\Omega)-1$.
\end{enumerate}
\end{remark}

\section{Preliminaries}\label{Sec:2}
Denote by $\lVert {\bf v} \rVert_{\mathbb C^n}$ the standard complex Euclidean norm of any vector ${\bf v}$ in $\mathbb C^n$.
The following lemma is a special case of a well-known result of Calabi \cite[Theorem 2 (Local Rigidity)]{Ca53}:
\begin{lemma}[cf. Calabi \cite{Ca53} or Lemma 3.3 in \cite{Ng11}]\label{LemCaSOS}
Let $g,f:B\to \mathbb C^N$ be holomorphic maps defined on some open subset $B\subset \mathbb C^n$ such that $\lVert f(w)\rVert^2_{\mathbb C^N}=\lVert g(w)\rVert^2_{\mathbb C^N}$ for any $w\in B$.
Then, there exists a unitary transformation $U$ in $\mathbb C^N$ such that $f=U\circ g$.
\end{lemma}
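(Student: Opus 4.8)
The plan is to upgrade the pointwise norm identity to an identity between the two Hermitian Gram kernels by polarization, and then to read off the desired unitary from an elementary comparison of the complex-linear spans of the two images.

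First I would polarize. Writing $f=(f_1,\dots,f_N)$ and $g=(g_1,\dots,g_N)$ with each component holomorphic, the hypothesis reads $\sum_i f_i(w)\overline{f_i(w)}=\sum_i g_i(w)\overline{g_i(w)}$ on $B$. Since each $f_i$ is holomorphic, the function $\zeta\mapsto \overline{f_i(\bar\zeta)}$ is holomorphic on the conjugate domain $\bar B:=\{\zeta:\bar\zeta\in B\}$, so
\[
h(z,\zeta):=\sum_i f_i(z)\,\overline{f_i(\bar\zeta)}-\sum_i g_i(z)\,\overline{g_i(\bar\zeta)}
\]
is holomorphic on $B\times \bar B$ and, by hypothesis, vanishes on the totally real set $\{(w,\bar w):w\in B\}$. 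Expanding $h$ in a power series about a point $(w_0,\bar w_0)$ and restricting to $\zeta=\bar z$ forces all Taylor coefficients to vanish, since the real-analytic monomials $s^\alpha\bar s^\beta$ are linearly independent; hence $h\equiv 0$. Setting $z=w$ and $\zeta=\bar w'$ then yields the key identity
\[
\langle f(w),f(w')\rangle_{\mathbb C^N}=\langle g(w),g(w')\rangle_{\mathbb C^N}\qquad\text{for all }w,w'\in B .
\]

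Next I would construct $U$. Let $W_f$ (resp.\ $W_g$) denote the complex-linear span of $\{f(w):w\in B\}$ (resp.\ $\{g(w):w\in B\}$) inside $\mathbb C^N$. Define $U_0:W_g\to W_f$ on the spanning set by $U_0(g(w)):=f(w)$ and extend $\mathbb C$-linearly. Both well-definedness and the isometry property follow at once from the Gram identity: for any finite combination one computes $\big\lVert\sum_j c_j f(w_j)\big\rVert^2=\sum_{j,k}c_j\bar c_k\langle f(w_j),f(w_k)\rangle=\sum_{j,k}c_j\bar c_k\langle g(w_j),g(w_k)\rangle=\big\lVert\sum_j c_j g(w_j)\big\rVert^2$, so the map sends $0$ to $0$ and preserves norms, hence inner products. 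Thus $U_0$ is a linear isometry of $W_g$ onto $W_f$, and in particular $\dim W_g=\dim W_f$. Finally I would extend $U_0$ to a unitary $U$ on all of $\mathbb C^N$ by choosing any linear isometry of the orthogonal complement $W_g^{\perp}$ onto $W_f^{\perp}$ (these have equal dimension $N-\dim W_g$); then $U\circ g=U_0\circ g=f$ on $B$, as required.

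The main obstacle is the justification of the polarization step, namely passing from the diagonal vanishing $h(w,\bar w)\equiv 0$ to the global vanishing $h\equiv 0$ on $B\times\bar B$; this rests on the identity principle for holomorphic functions vanishing on a maximal totally real submanifold, which I would carry out through the power-series argument sketched above (assuming, as is standard, that $B$ is connected, so that the local vanishing propagates). Everything after polarization is elementary finite-dimensional linear algebra, the only point worth recording being that a partial isometry between equidimensional subspaces of $\mathbb C^N$ always extends to a global unitary.
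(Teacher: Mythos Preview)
Your argument is correct and is essentially the classical one going back to Calabi: polarize the diagonal identity to the full Gram identity $\langle f(w),f(w')\rangle=\langle g(w),g(w')\rangle$ via the identity principle on the maximal totally real diagonal $\{(w,\bar w)\}\subset B\times\bar B$, then build the unitary from the resulting equality of Gram matrices. The paper itself gives no proof of this lemma --- it is quoted as a special case of Calabi's local rigidity theorem (and of Lemma~3.3 in Ng's paper) --- so there is no in-paper argument to compare against; your write-up supplies exactly the standard proof those references contain. Your parenthetical remark that connectedness of $B$ is needed for the polarization step to globalize is accurate and worth keeping; in the paper's applications $B$ is always a ball, so the point is moot there.
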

\begin{remark}
Writing $f=(f^1,\ldots,f^N)$ and $g=(g^1,\ldots,g^N)$, there exists an $N\times N$ unitary matrix ${\bf U'}$ such that
\[ {\bf U'} \begin{pmatrix}
f^1(w),\ldots, f^N(w)
\end{pmatrix}^T= \begin{pmatrix}
g^1(w),\ldots, g^N(w)
\end{pmatrix}^T \quad \forall\;w\in B. \]
\end{remark}
Moreover, we have the following fact from linear algebra.
\begin{lemma}[cf. Lemma 5 in \cite{CM16}]\label{Matrix2}
Let $m'$ and $n'$ be integers such that $1\le m'<n'$ and let ${\bf A''}\in M(m',n';\mathbb C)$ be such that ${\bf A''}\overline{{\bf A''}}^T={\bf I_{m'}}$.
Then, there exists ${\bf U'}\in M(n'-m',n';\mathbb C)$ such that $\begin{bmatrix}
{\bf U'}\\{\bf A''}
\end{bmatrix} \in U(n')$.
\end{lemma}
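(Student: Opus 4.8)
The plan is to read the hypothesis ${\bf A''}\overline{{\bf A''}}^T={\bf I}_{m'}$ as the geometric statement that the $m'$ rows of ${\bf A''}$ form an orthonormal system in $\mathbb C^{n'}$ with respect to the standard Hermitian inner product $\langle u,v\rangle=\sum_{k=1}^{n'}u_k\overline{v_k}$. Indeed, writing $a_1,\ldots,a_{m'}\in\mathbb C^{n'}$ for the rows of ${\bf A''}$, the $(i,j)$-entry of ${\bf A''}\overline{{\bf A''}}^T$ equals $\sum_k (a_i)_k\,\overline{(a_j)_k}=\langle a_i,a_j\rangle$, so the matrix identity is equivalent to $\langle a_i,a_j\rangle=\delta_{ij}$. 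Thus the entire problem reduces to the classical fact that an orthonormal system of $m'$ vectors in $\mathbb C^{n'}$ extends to an orthonormal basis of $\mathbb C^{n'}$, together with the bookkeeping needed to repackage the appended vectors as the rows of ${\bf U'}$.

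First I would set $W:=\mathrm{span}_{\mathbb C}\{a_1,\ldots,a_{m'}\}\subseteq\mathbb C^{n'}$. Since orthonormal vectors are linearly independent, $\dim_{\mathbb C}W=m'$, and hence the Hermitian orthogonal complement $W^\perp$ has dimension $n'-m'$. Applying the Gram--Schmidt process to any basis of $W^\perp$ produces an orthonormal basis $v_1,\ldots,v_{n'-m'}$ of $W^\perp$; I would then define ${\bf U'}\in M(n'-m',n';\mathbb C)$ to be the matrix whose $i$-th row is $v_i$. By construction $\langle v_i,v_j\rangle=\delta_{ij}$ and $\langle a_i,v_j\rangle=0$ for all admissible $i,j$, since each $v_j$ lies in $W^\perp$ while each $a_i$ lies in $W$.

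Finally I would verify that ${\bf M}:=\begin{bmatrix}{\bf U'}\\{\bf A''}\end{bmatrix}\in M(n',n';\mathbb C)$ is unitary by a block computation of ${\bf M}\,\overline{{\bf M}}^T$: its two diagonal blocks are ${\bf U'}\overline{{\bf U'}}^T={\bf I}_{n'-m'}$ (orthonormality of the $v_i$) and ${\bf A''}\overline{{\bf A''}}^T={\bf I}_{m'}$ (the hypothesis), while the off-diagonal block ${\bf A''}\overline{{\bf U'}}^T$ has $(i,j)$-entry $\langle a_i,v_j\rangle=0$ and therefore vanishes, its conjugate transpose ${\bf U'}\overline{{\bf A''}}^T$ vanishing for the same reason. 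Hence ${\bf M}\,\overline{{\bf M}}^T={\bf I}_{n'}$, so ${\bf M}\in U(n')$, which is exactly the asserted conclusion. There is no genuine obstacle in this argument; the only point that demands care is keeping the matrix conventions consistent throughout—namely that ``orthonormal rows'' corresponds to $X\overline{X}^T=I$ rather than $\overline{X}^TX=I$—so that the block identity $\langle a_i,v_j\rangle=0$ is correctly matched to the off-diagonal blocks of ${\bf M}\,\overline{{\bf M}}^T$.
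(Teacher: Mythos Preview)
Your argument is correct: interpreting ${\bf A''}\overline{{\bf A''}}^T={\bf I}_{m'}$ as orthonormality of the rows, extending to an orthonormal basis of $\mathbb C^{n'}$ via Gram--Schmidt on the orthogonal complement, and verifying the block identity ${\bf M}\,\overline{{\bf M}}^T={\bf I}_{n'}$ is exactly the right approach, and your care with the row convention is appropriate. The paper does not supply its own proof of this lemma but simply cites it from \cite{CM16}; your argument is the standard elementary one and is certainly what is intended.
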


For the complex unit ball $\mathbb B^n\subset \mathbb C^n$, the K\"ahler form $\omega_{g_{\mathbb B^n}}$ of $(\mathbb B^n,g_{\mathbb B^n})$ is given by
\[ \omega_{g_{\mathbb B^n}}=-\sqrt{-1}\partial\overline\partial \log \left(1-\lVert w\rVert^2_{\mathbb C^n}\right) \]
so that $(\mathbb B^n,g_{\mathbb B^n})$ is of constant holomorphic sectional curvature $-2$.
Note that the Bergman metric $K_\Omega(z,\xi)$ of $\Omega$ can be expressed as $K_\Omega(z,\xi)={1\over \mathrm{Vol}(\Omega)} h_\Omega(z,\xi)^{-(p(\Omega)+2)}$, where $\mathrm{Vol}(\Omega)$ is the Euclidean volume of $\Omega\Subset \mathbb C^N$, $h_\Omega(z,\xi)$ is some polynomial in $(z,\overline\xi)$ such that $h_\Omega(z,{\bf 0})\equiv 1$ and $p(\Omega)$ is defined as in Section \ref{Sec:1}.
It follows from \cite{CM16} that the K\"ahler form $\omega_{g_{\Omega}}$ of $(\Omega,g_{\Omega})$ is given by
\[ \omega_{g_\Omega}=-\sqrt{-1}\partial\overline\partial\log h_\Omega(z,z) \]
in terms of the Harish-Chandra coordinates $z\in \Omega\Subset \mathbb C^N$.
The type-$\mathrm{IV}$ domain $D^{\mathrm{IV}}_N$, $N\ge 3$, is given by
\[ D^{\mathrm{IV}}_N = \left\{z=(z_1,\ldots,z_N)\in \mathbb C^N:
\sum_{j=1}^N |z_j|^2 < 2,\; \sum_{j=1}^N|z_j|^2 <1+ \left|{1\over 2}\sum_{j=1}^N z_j^2\right|^2 \right\} \]
(cf. \cite[p.\;83]{Mok89}).
Then, the K\"ahler form $\omega_{g_{D^{\mathrm{IV}}_N}}$ of $(D^{\mathrm{IV}}_N,g_{D^{\mathrm{IV}}_N})$ is given by
$\omega_{g_{D^{\mathrm{IV}}_N}}
= -\sqrt{-1}\partial\overline\partial \log \left(
1-\sum_{j=1}^N|z_j|^2 + \left|{1\over 2}\sum_{j=1}^N z_j^2\right|^2\right)$.
As mentioned in Section \ref{Sec:1}, we have the following:
For any irreducible bounded symmetric domain $\Omega\Subset \mathbb C^N$ of rank $r\ge 2$ in its Harish-Chandra realization, we may suppose that the Harish-Chandra coordinates $z=(z_1,\ldots,z_N)$ on $\Omega\Subset \mathbb C^N$ are chosen so that there are homogeneous polynomials $G_l(z)$ of $z$ and of degree $\deg(G_l)$, $1\le l \le N'$, such that 
\begin{enumerate}
\item[(i)] $2\le \deg (G_l) \le r$ for $N+1\le l\le N'$ and $G_j(z)=z_j$ for $1\le j\le N$,
\item[(ii)]
\[ h_\Omega(z,\xi) = 1+\sum_{j=1}^{N'}(-1)^{\deg (G_l)} G_l(z)\overline{G_l(\xi)} \]
and the restriction of the minimal embedding $\iota:X_c\hookrightarrow \mathbb P\big(\Gamma(X_c,\mathcal O(1))^*\big)\cong \mathbb P^{N'}$ to the dense open subset $\mathbb C^N \subset X_c$ may be written as
\[ \iota(z)=[1,G_1(z),\ldots,G_{N'}(z)] \]
in terms of the Harish-Chandra coordinates $z=(z_1,\ldots,z_N)\in \mathbb C^N$.
\end{enumerate}
For instance, if $\Omega=D^{\mathrm{IV}}_N\Subset \mathbb C^N$, $N\ge 3$, is the type-$\mathrm{IV}$ domain, then
$h_\Omega(z,z) = 1-\sum_{j=1}^N|z_j|^2 + \left|{1\over 2}\sum_{j=1}^N z_j^2\right|^2$
and $\iota(z) = \left[z_1,\ldots,z_N,1,{1\over 2}\sum_{j=1}^Nz_j^2\right]$ for $z=(z_1,\ldots,z_N)\in \mathbb C^N$ (cf. \cite[p.\;83]{Mok89}).
We refer the readers to Loos \cite{Lo77} and Fang-Huang-Xiao \cite{FHX16} for details of the above facts.

Let $f:(\mathbb B^n,kg_{\mathbb B^n})\to (\Omega,g_\Omega)$ be a holomorphic isometry such that $f({\bf 0})={\bf 0}$, where $\Omega$ is an irreducible bounded symmetric domain of rank $r\ge 2$ and $k$ is an integer such that $1\le k\le r$.
Then, we have the functional equation
\[ h_\Omega(f(w),f(w)) = \left(1-\lVert w\rVert_{\mathbb C^n}^2\right)^k \]
for $w\in \mathbb B^n$ (cf. \cite{Mok12, CM16}).

Throughout the present article, given any irreducible bounded symmetric domain $\Omega$, we always denote by $N':=\dim_{\mathbb C} \mathbb P\big(\Gamma(X_c,\mathcal O(1))^*\big)$, where $X_c$ is the compact dual Hermitian symmetric space of $\Omega$.

\subsection{On higher characteristic bundles over irreducible bounded symmetric domains}\label{sec:2.1}
Let $\Omega\Subset \mathbb C^N$ be an irreducible bounded symmetric domain of rank $r$ in its Harish-Chandra realization and $X_c$ be the compact dual of $\Omega$.
Throughout this section, we follow Wolf \cite{Wo72} and Mok \cite[pp.\;251-253]{Mok89}.
We always identify the base point $o\in X_0$ with ${\bf 0}\in \Omega=\xi^{-1}(X_0)$, where $\xi:\mathfrak m^+\cong \mathbb C^N\to G^{\mathbb C}/P\cong X_c$ is the embedding defined by $\xi(v)=\exp(v)\cdot P$ (cf. Wolf \cite{Wo72} and Mok \cite[p.\;94]{Mok89}).
Let $\Psi=\{\psi_1,\ldots,\psi_r\}\subset \Delta^+_M$ be a maximal strongly orthogonal set of non-compact positive roots (see \cite{Wo72}).
Then, we have the corresponding root vectors $e_{\psi_j}$, $1\le j\le r$.
Moreover, we have $\mathfrak g_{\psi_j}=\mathbb C e_{\psi_j}$ for $1\le j\le r$ and the maximal polydisk $\Delta^r\cong \Pi\subset \Omega$ is given by
$\Pi= \left(\bigoplus_{j=1}^r \mathfrak g_{\psi_j}\right)\cap \Omega$ (cf. \cite{Wo72, Mok14}).
From \cite[p.\;252]{Mok89}, for any $v\in \mathfrak m^+\cong T_{\bf 0}(\Omega)$, there exists $k\in \mathfrak k$ such that $\mathrm{ad}(k)\cdot v = \sum_{j=1}^r a_j e_{\psi_j}$ with $a_j\in \mathbb R$ ($1\le j\le r$) and $a_1\ge \cdots \ge a_r \ge 0$.
Then, $\eta=\sum_{j=1}^r a_j e_{\psi_j}$ is said to be the normal form of $v$ and is uniquely determined by $v$.
The cardinality of the set $\{j\in \{1,\ldots,r\}:a_j\neq 0\}$ is called the rank of $v$, which is denoted by $r(v)$.
For $1\le k\le r=\mathrm{rank}(\Omega)$, we define
\[ \mathcal S_{k,x}(\Omega):=\{[v]\in \mathbb P(T_x(\Omega)): 1\le r(v) \le k\} \subseteq \mathbb P(T_x(\Omega)), \]
called the $k$-th characteristic projective subvariety at $x\in \Omega$.
Then, $\mathcal S_k(\Omega):=\bigcup_{x\in \Omega} \mathcal S_{k,x}(\Omega) \subset \mathbb P T(\Omega)$ is called the $k$-th characteristic bundle over $\Omega$.
We simply call $\mathcal S_x(\Omega):=\mathcal S_{1,x}(\Omega)$ the characteristic variety at $x\in \Omega$.
From \cite{Mok89}, $\mathcal S_x(\Omega)\subset \mathbb P(T_x(\Omega))$ is a connected complex submanifold while $\mathcal S_{k,x}(\Omega)\subset \mathbb P(T_x(\Omega))$ is singular for $2\le k\le r-1$ provided that $r=\mathrm{rank}(\Omega)\ge 3$.
In addition, $\mathcal S_{r,x}(\Omega)=\mathbb P(T_x(\Omega))$ for $x\in \Omega$ and we have the inclusions $\mathcal S_{1,x}(\Omega)\subset \cdots \subset \mathcal S_{r,x}(\Omega)$.
Furthermore, for $r\ge 2$, $k\ge 2$ and $x\in \Omega$, $\mathcal S_{k,x}(\Omega)\subseteq \mathbb P(T_x(\Omega))$ is an irreducible projective subvariety because $\mathcal S_{k,x}(\Omega)\smallsetminus \mathcal S_{k-1,x}(\Omega)=P\cdot [v]$ is an orbit for any $[v]$ such that $v\in T_x(\Omega)\smallsetminus\{{\bf 0}\}$ is a rank-$k$ vector (cf. \cite{Mok2002Compositio}) and $\mathcal S_{k,x}(\Omega)\smallsetminus \mathcal S_{k-1,x}(\Omega)$ is dense in $\mathcal S_{k,x}(\Omega)$.
\begin{proposition}[cf. Mok \cite{Mok89}, p.\;252]
The $k$-th characteristic bundle $\mathcal S_k(\Omega)\to \Omega$ is holomorphic.
In addition, in terms of the Harish-Chandra embedding $\Omega\hookrightarrow \mathbb C^N$, $\mathcal S_k(\Omega)$ is parallel on $\Omega$ in the Euclidean sense, i.e., identifying $\mathbb PT(\Omega)$ with $\Omega\times \mathbb P^{N-1}$ using the Harish-Chandra coordinates, we have $\mathcal S_k(\Omega)\cong \Omega \times \mathcal S_{k,{\bf 0}}(\Omega)$.
\end{proposition}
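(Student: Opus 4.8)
The plan is to derive both assertions from the homogeneity of $\Omega$ together with the fact, recalled above, that each fiber $\mathcal S_{k,x}(\Omega)$ is a (closed, indeed projective) subvariety of $\mathbb P(T_x(\Omega))$. Write $G_0:=\mathrm{Aut}_0(\Omega)$ for the identity component of the automorphism group of $\Omega$. By the Borel embedding, $G_0$ extends to a holomorphic action on $X_c$, so it acts holomorphically on the projectivized holomorphic tangent bundle $\mathbb P T(\Omega)$ covering its transitive action on $\Omega$, and the isotropy subgroup at ${\bf 0}$ is a maximal compact subgroup $K\subset G_0$ acting on $T_{\bf 0}(\Omega)\cong\mathfrak m^+\cong\mathbb C^N$ through the linear isotropy representation. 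By the very definition of the normal form $\eta=\sum_{j=1}^r a_j e_{\psi_j}$ of $v\in\mathfrak m^+$, the rank $r(v)$ depends only on the $K$-orbit of $v$; hence $\mathcal S_{k,{\bf 0}}(\Omega)=\{[v]:1\le r(v)\le k\}$ is $K$-invariant, and transporting by elements of $G_0$ (which carry the characteristic variety at a point to the characteristic variety at its image) shows that $\mathcal S_k(\Omega)=\bigcup_{x\in\Omega}\mathcal S_{k,x}(\Omega)$ is a $G_0$-invariant subset of $\mathbb P T(\Omega)$. Since its fiber over ${\bf 0}$ is a projective subvariety and $G_0$ acts holomorphically, $\mathcal S_k(\Omega)\cong G_0\times_K\mathcal S_{k,{\bf 0}}(\Omega)$ is a holomorphic fiber subbundle of $\mathbb P T(\Omega)\to\Omega$, which gives the holomorphicity statement.

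For the Euclidean parallelism I would identify $\mathbb P T(\Omega)$ with $\Omega\times\mathbb P^{N-1}$ via the Harish-Chandra coordinates, so that $T_x(\Omega)=\mathbb C^N$ canonically for all $x\in\Omega$, and then prove $\mathcal S_{k,x}(\Omega)=\mathcal S_{k,{\bf 0}}(\Omega)$ as subvarieties of $\mathbb P^{N-1}$ for every $x$. The first observation is that the decomposition $\eta=\sum_{j=1}^r a_j e_{\psi_j}$ is exactly the spectral (Peirce) decomposition of $v$ in the Hermitian Jordan triple system carried by $\mathfrak m^+\cong\mathbb C^N$, so that $r(v)$ is the Jordan-triple rank of $v$ regarded as an element of $\mathbb C^N$ --- a notion depending only on the fixed triple product on $\mathbb C^N$ and not on any base point. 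Hence $\mathcal S_{k,{\bf 0}}(\Omega)=\{[v]\in\mathbb P^{N-1}:r(v)\le k\}$. For general $x$, pick $g\in G_0$ with $g({\bf 0})=x$; the $G_0$-equivariance from the first paragraph gives $\mathcal S_{k,x}(\Omega)=dg|_{\bf 0}(\mathcal S_{k,{\bf 0}}(\Omega))$, where $dg|_{\bf 0}$ is read off as a linear automorphism of $\mathbb C^N$ in the Harish-Chandra trivialization. The crucial point is that $dg|_{\bf 0}$ belongs to the structure group of the Jordan triple system $\mathfrak m^+$: for $g\in K$ it is a triple-system automorphism, while a general $g$ factors as such an element composed with a transvection $t_a$, $t_a({\bf 0})=a$, whose differential at ${\bf 0}$ is in the Harish-Chandra picture a positive operator of the shape $B(a,a)^{1/2}$ assembled from the quadratic representation, again a structure-group element. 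Since the rank in a Jordan triple system is invariant under its structure group, $dg|_{\bf 0}$ maps $\{r\le k\}$ onto itself, so $\mathcal S_{k,x}(\Omega)=\mathcal S_{k,{\bf 0}}(\Omega)$ for all $x$ --- that is, $\mathcal S_k(\Omega)\cong\Omega\times\mathcal S_{k,{\bf 0}}(\Omega)$.

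The main obstacle is this last structure-group claim: that the differentials at ${\bf 0}$ of the transvections of $\Omega$, computed in the Harish-Chandra realization, lie in the structure group of $\mathfrak m^+$ and therefore preserve the rank function. This reduces to the explicit M\"obius-type form of the automorphisms of $\Omega$ in the Harish-Chandra realization --- for which I would appeal to Loos \cite{Lo77} --- together with the standard Jordan-theoretic fact that the rank is a structure-group invariant. As a consistency check one can verify the case $k=1$ directly from the minimal embedding $\iota(z)=[1,G_1(z),\dots,G_{N'}(z)]$: $\mathcal S_{1,x}(\Omega)$ is the cone of tangent directions at $\iota(x)$ to the projective lines contained in $\iota(X_c)$, and a short expansion of $\iota(z+sv)$ in the parameter $s$ displays this cone as independent of $x$; for $\Omega=D^{\mathrm{IV}}_N$ one reads off from $\iota(z)=[z_1,\dots,z_N,1,\tfrac{1}{2}\sum_{j=1}^N z_j^2]$ that it is the smooth quadric $\{[v]:\sum_{j=1}^N v_j^2=0\}$ at every point.
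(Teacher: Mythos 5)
The paper does not actually prove this Proposition; it is quoted verbatim from Mok \cite[p.\;252]{Mok89}, so your attempt should be measured against the argument there. Your proof is essentially correct but takes a genuinely different route. The standard argument works on the compact dual: the fiber $\mathcal S_{k,o}(X_c)\subset \mathbb P(T_o(X_c))$ is a finite union of orbit closures of the isotropy (parabolic) subgroup $P\subset G^{\mathbb C}$, so $\mathcal S_k(X_c)\subset \mathbb PT(X_c)$ is a $G^{\mathbb C}$-invariant complex-analytic subvariety; since the Euclidean translations $z\mapsto z+a$ of the Harish-Chandra chart $\mathbb C^N\subset X_c$ are realized by the abelian unipotent subgroup $\exp(\mathfrak m^+)\subset G^{\mathbb C}$, whose differentials in these coordinates are literally the identity, both the parallelism and the holomorphicity of $\mathcal S_k(\Omega)=\mathcal S_k(X_c)|_\Omega$ drop out in one stroke. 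You instead stay inside the real group $G_0$ and pay for it with the Jordan-theoretic input that $dg|_{\bf 0}$ lies in the structure group of $\mathfrak m^+$ (via $K\subset\mathrm{Aut}(\mathfrak m^+)$ and $dt_a|_{\bf 0}=B(a,a)^{1/2}$) and that the rank is a structure-group invariant. Those facts are true and standard (they are in Loos \cite{Lo77}: the loci $\{r(v)\le k\}$ are cut out by the Jordan-triple minors, which transform covariantly under the structure group), so deferring to \cite{Lo77} is legitimate; your approach buys an explicit identification of $r(v)$ with the Jordan-triple rank, at the cost of machinery the complexified-group argument avoids.

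One genuine soft spot: your first paragraph does not by itself establish holomorphicity. The $G_0$-saturation $G_0\times_K\mathcal S_{k,{\bf 0}}(\Omega)$ of a projective subvariety of a single fiber is, a priori, only a real-analytic subset of $\mathbb PT(\Omega)$ --- $G_0$ is a real form and does not act transitively in any fiberwise complex sense, so you still need the fibers to vary holomorphically in the base. That is exactly what your second paragraph supplies (constant fibers in the Euclidean trivialization), so the proof is complete, but the logical order should be reversed: establish the parallelism $\mathcal S_{k,x}(\Omega)=\mathcal S_{k,{\bf 0}}(\Omega)$ first and then read off holomorphicity from $\mathcal S_k(\Omega)=\Omega\times\mathcal S_{k,{\bf 0}}(\Omega)$, rather than asserting that the associated-bundle description is already a ``holomorphic fiber subbundle.'' The $k=1$ consistency check via the minimal embedding, and the quadric computation for $D^{\mathrm{IV}}_N$, are both correct.
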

\begin{remark}
For any nonzero vector $v\in T_{\bf 0}(\Omega)$, we let $\mathcal N_v:=\{\xi \in T_{\bf 0}(\Omega): R_{v\overline v \xi\overline \xi}(\Omega,g_\Omega) = 0\}$ be the null space of $v$.
From \cite{Mok89}, the $k$-th null dimension of $\Omega$ is defined by $n_k(\Omega):=\dim_{\mathbb C} \mathcal N_v = \dim_{\mathbb C} \mathcal N_\eta$, where $\eta=\sum_{j=1}^k a_j e_{\psi_j}$ ($a_j>0$ for $1\le j\le k$) is the normal form of some vector $v\in T_{\bf 0}(\Omega)$ of rank $k$.
Here $n_k(\Omega):=\dim_{\mathbb C} \mathcal N_v$ only depends on the rank $k=r(v)$ of $v$.
Then, Mok \cite{Mok89} proved that $\dim_{\mathbb C} \mathcal S_k(\Omega) = 2N-n_k(\Omega)-1$.
In particular, $\mathcal S_{k,x}(\Omega)$ is of dimension $N-n_k(\Omega)-1$ as an irreducible projective subvariety of $\mathbb P(T_x(\Omega))$ for any $x\in \Omega$.
Moreover, we have
$n(\Omega):=n_1(\Omega) \ge \cdots \ge n_r(\Omega)=0$ and $n(\Omega)$ is called the null dimension of $\Omega$.
From \cite{Mok89}, we define $p(\Omega)=\dim_{\mathbb C} \mathcal S_{\bf 0}(\Omega)$.
Then, we have $\dim_{\mathbb C}\Omega=N=p(\Omega)+n(\Omega)+1$.
\end{remark}
\noindent For $x\in \Omega$, under the identification
$T_x(\Omega)=T_x(X_c)$, we have $\mathcal S_x(\Omega) = \mathscr C_x(X_c)$, where $\mathscr C_y(X_c)\subset \mathbb P(T_y(X_c))$ is the variety of minimal rational tangents (VMRT) of the compact dual $X_c$ of $\Omega$ at $y\in X_c$.
We define $p(X_c):= \dim_{\mathbb C} \mathscr C_o(X_c)$ for the base point $o\in X_c$, which is identified with ${\bf 0}\in \mathfrak m^+$, i.e., $\xi({\bf 0})=o \in X_c\cong G^{\mathbb C}/P$.
For the notion of the VMRTs of Hermitian symmetric spaces of the compact type, we refer the reader to Hwang-Mok \cite{HM99}.
Note that $\dim_{\mathbb C} \mathscr C_y(X_c)$ does not depend on the choice of $y\in X_c$.
Then, we have $p(X_c)=p(\Omega)=\dim_{\mathbb C} \mathscr C_x(X_c)$ for any $x\in \Omega\subset X_c$.
\subsubsection{Holomorphic sectional curvature}\label{Sec:2.1.1}
Let $\Omega\Subset \mathbb C^N$ be an irreducible bounded symmetric domain of rank $r$ in its Harish-Chandra realization and $X_c$ be its compact dual Hermitian symmetric space.
Recall that $g_\Omega$ is the canonical K\"ahler-Einstein metric on $\Omega$ normalized so that minimal disks are of constant Gaussian curvature $-2$.
Then, the Bergman kernel on $\Omega$ is given by
\[ K_\Omega(z,\xi) = {1\over \mathrm{Vol}(\Omega)} h_\Omega(z,\xi)^{-(p(\Omega)+2)}, \]
where $\mathrm{Vol}(\Omega)$ is the Euclidean volume of $\Omega$ in $\mathbb C^N$, $h_\Omega(z,\xi)$ is a polynomial in $(z,\overline\xi)$ and $p(\Omega):=p(X_c)$ is the complex dimension of the VMRT of $X_c$ at the base point $o\in X_c$ (cf. \cite{Mok16}).
For $z\in \Omega\cong G_0/K$, there exists $k\in K$ such that $k\cdot z = \sum_{j=1}^r a_j e_{\psi_j} \in \left(\bigoplus_{j=1}^r \mathfrak g_{\psi_j}\right)\cap \Omega=\Pi$ for $|a_j|^2<1$, $1\le j\le r$, and
\[ h_\Omega(z,z) = \prod_{j=1}^r (1-|a_j|^2), \]
where $r$ is the rank of the irreducible bounded symmetric domain $\Omega$,
$\Pi\cong \Delta^r$ is a maximal polydisk in $\Omega$ which satisfies $(\Pi,g_\Omega|_\Pi)\cong (\Delta^r, {1\over 2} ds_{\Delta^r}^2)$ (cf. \cite{Mok14}).
In particular, it follows from the Polydisk Theorem (cf. \cite[p.\;88]{Mok89}) that
\[ -2 \le R_{\alpha\overline\alpha\alpha\overline\alpha}(\Omega,g_\Omega) \le -{2\over r} \]
for any unit vector $\alpha\in T_x(\Omega)$ and $x\in \Omega$.
Let $x\in \Omega$ and $\beta\in T_x(\Omega)$ be such that $\lVert \beta \rVert_{g_\Omega}^2=1$.
If $\beta$ is of rank $r(\beta) = s$, then we have
$R_{\beta\overline\beta\beta\overline\beta}(\Omega,g_\Omega)\le -{2\over s}$ because there exists $g\in G_0\cong \mathrm{Aut}_0(\Omega)$ such that $g\cdot \beta \in T_{\bf 0}(\Pi_s)$ for some totally geodesic submanifold $(\Pi_s,g_\Omega|_{\Pi_s})\subset (\Pi,g_\Omega|_\Pi)$ which is holomorphically isometric to $(\Delta^s,{1\over 2}ds_{\Delta^s}^2)$.

\section{On holomorphic isometries of complex unit balls into bounded symmetric domains with non-minimal isometric constants}
Let $\Omega\Subset \mathbb C^N$ be an irreducible bounded symmetric domain of rank $\ge 2$ in its Harish-Chandra realization.
In \cite{Mok16}, Mok studied the space $\widehat{\bf HI}_1(\mathbb B^n,\Omega)$ and provided a sharp upper bound on dimensions of isometrically embedded complex unit balls $(\mathbb B^n,g_{\mathbb B^n})$ in the irreducible bounded symmetric domain $(\Omega,g_\Omega)$ equipped with the canonical K\"ahler-Einstein metric $g_\Omega$.
Recall that given any $f \in \widehat{\bf HI}_k(\mathbb B^n,\Omega)$ with $k>0$ being a real constant, $k$ is a positive integer satisfying $1\le k\le \mathrm{rank}(\Omega)$ (cf. Chan-Mok \cite{CM16}).
It is natural to ask whether some results in Mok's study \cite{Mok16} could be generalized to the study of the space $\widehat{\bf HI}_k(\mathbb B^n,\Omega)$ for $k\ge 2$.

In the first part of this section (see Section \ref{Sec:3.1}), we provide an upper bound of $n$ whenever $\widehat{\bf HI}_k(\mathbb B^n,\Omega)\neq \varnothing$, where $k\ge 2$.
Note that such an upper bound is not sharp in general.
For instance, if $\Omega=D^{\mathrm{I}}_{p,q}$ with $q\ge p\ge 2$ and $k=\mathrm{rank}(\Omega)=p$, then $\widehat{\bf HI}_k(\mathbb B^n,\Omega)\neq \varnothing$ implies
$n \le {q\over p}$ (cf. \cite[Proposition 3.2]{KM08}).
On the other hand, our general result will imply that $n \le n_{p-1}(D^{\mathrm{I}}_{p,q}) = q-p+1$ whenever $\widehat{\bf HI}_p(\mathbb B^n,D^{\mathrm{I}}_{p,q})\neq \varnothing$ with $q\ge p\ge 2$.
In the case where $q=3$ and $p=2$, we have $n\le 2$ from our general result.
But then it follows from \cite[Proposition 3.2]{KM08} that $n = 1$ whenever $\widehat{\bf HI}_2(\mathbb B^n,D^{\mathrm{I}}_{2,3})\neq \varnothing$.
This explains that the upper bound obtained in our general result is not sharp in general.
However, one of the applications of our general result is that if $\Omega$ satisfies certain condition and $\widehat{\bf HI}_k(\mathbb B^n,\Omega)\neq \varnothing$ for some fixed real constant $k>1$, then $n\le p(\Omega)$.
In the second part of this section (see Section \ref{Sec:3.2}), we continue our study in Chan-Mok \cite{CM16} to the study of the space $\widehat{\bf HI}_2(\mathbb B^n,\Omega)$. In particular, we will obtain an analogue of \cite[Theorem 1]{CM16} for holomorphic isometries in the space $\widehat{\bf HI}_2(\mathbb B^n,\Omega)$ without using the system of functional equations introduced in Mok \cite{Mok12}.

\subsection{Upper bounds on dimensions of isometrically embedded complex unit balls in an irreducible bounded symmetric domain}\label{Sec:3.1}
Let $\Omega\Subset \mathbb C^N$ be an irreducible bounded symmetric domain of rank $\ge 2$.
Motivated by Mok's study \cite{Mok16}, one may continue to study the space $\widehat{\bf HI}_{\lambda'}(\mathbb B^n,\Omega)$ for $\lambda'>1$.
In this section, we study the upper bound on dimensions of isometrically embedded complex unit balls in an irreducible bounded symmetric domain of rank $\ge 2$ when the isometric constant is equal to $\lambda'>1$.
It is natural to ask whether the upper bound $p(\Omega)+1$ obtained in \cite{Mok16} is optimal in the sense that $n\le p(\Omega)+1$ whenever $\widehat{\bf HI}_{\lambda'}(\mathbb B^n,\Omega)\neq \varnothing$ for some real constant $\lambda'>0$. More specifically, we may ask whether $n\le p(\Omega)$ whenever $\widehat{\bf HI}_{\lambda'}(\mathbb B^n,\Omega)\neq \varnothing$ for some real constant $\lambda'>1$.

For any given integer $\lambda'\ge 2$, in order to obtain a sharp upper bound of $n$ such that $\widehat{\bf HI}_{\lambda'}(\mathbb B^n,\Omega)\neq \varnothing$, one should construct a holomorphic isometry $f\in \widehat{\bf HI}_{\lambda'}(\mathbb B^{n_0},\Omega)$ for some integer $n_0\ge 1$ such that $\widehat{\bf HI}_{\lambda'}(\mathbb B^{n},\Omega)$ $\neq$ $\varnothing$ only if $n\le n_0$.
Note that this problem remains unsolved, but we can provide a (rough) upper bound of $n$ by using the $k$-th characteristic bundle on $\Omega$.
More precisely, for any integer $\lambda'$ satisfying $2\le \lambda' \le \mathrm{rank}(\Omega)$, we prove that if $\widehat{\bf HI}_{\lambda'}(\mathbb B^n,\Omega)\neq \varnothing$, then $n\le n_{\lambda'-1}(\Omega)$, where $n_k(\Omega)$ is the $k$-th null dimension of $\Omega$ (cf. \cite{Mok89}).
This is precisely the assertion of Proposition \ref{ProBoundofDimIsoConstGT2}.
Moreover, for certain irreducible bounded symmetric domains $\Omega$ of rank $\ge 2$ (including the two irreducible bounded symmetric domains of the exceptional type) we will show that $n\le p(\Omega)$ whenever $\widehat{\bf HI}_{\lambda'}(\mathbb B^n,\Omega)\neq \varnothing$ for some integer $\lambda'\ge 2$.
Now, we are ready to prove Proposition \ref{ProBoundofDimIsoConstGT2}.
\begin{proof}[Proof of Proposition \ref{ProBoundofDimIsoConstGT2}]
Let $f\in \widehat{\bf HI}_{\lambda'}(\mathbb B^n,\Omega)$ be a holomorphic isometry.
Write $S:=f(\mathbb B^n)$.
If $\mathbb P(T_y(S))\cap \mathcal S_{\lambda'-1,y}(\Omega)\neq \varnothing$ for some $y\in S$, then there exists a vector $\alpha\in T_y(S)\subset T_y(\Omega)$ of unit length with respect to $g_\Omega$ and of rank $k \le \lambda'-1$ such that $R_{\alpha\overline\alpha\alpha\overline\alpha}(\Omega,g_\Omega)\le -{2\over k} \le -{2\over \lambda'-1}$ (cf. Section \ref{Sec:2.1.1}).
But then we have $-{2\over \lambda'}=R_{\alpha\overline\alpha\alpha\overline\alpha}(S,g_\Omega|_S) \le R_{\alpha\overline\alpha\alpha\overline\alpha}(\Omega,g_\Omega)\le -{2\over \lambda'-1}$ from the Gauss equation, which is a contradiction.
Hence, we have $\mathbb P(T_y(S))\cap \mathcal S_{\lambda'-1,y}(\Omega)=\varnothing$ for any $y\in S$.
Recall from Section \ref{sec:2.1} that $\mathcal S_{\lambda'-1,y}(\Omega)\subseteq \mathbb P(T_y(\Omega))$ is an irreducible projective subvariety of complex dimension $N-n_{\lambda'-1}(\Omega)-1$.
Then, it follows from the inequality
\[ \dim_{\mathbb C}(\mathbb P(T_y(S))\cap \mathcal S_{\lambda'-1,y}(\Omega))
\ge \dim_{\mathbb C}\mathbb P(T_y(S))+\dim_{\mathbb C} \mathcal S_{\lambda'-1,y}(\Omega) - \dim_{\mathbb C} \mathbb P(T_y(\Omega)) \]
that $n\le n_{\lambda'-1}(\Omega)$ (cf. \cite[p.\;57]{Mum95}).
\end{proof}
\begin{lemma}\label{NullDim1}
Let $\Omega\Subset \mathbb C^N$ be an irreducible bounded symmetric domain of rank $\ge 2$ in its Harish-Chandra realization.
Then, $n(\Omega)\le p(\Omega)$ if and only if $\Omega$ is biholomorphic to one of the following:
\begin{enumerate}
\item $D^{\mathrm{I}}_{p',q'}$, where $p'$ and $q'$ are integers satisfying $2=p' < q'$ or $p'=q'=3$,
\item $D^{\mathrm{II}}_m$ for some integer $m$ satisfying $5\le m\le 7$,
\item $D^{\mathrm{IV}}_n$ for some integer $n\ge 3$, 
\item $D^{\mathrm{V}}$,
\item $D^{\mathrm{VI}}$.
\end{enumerate}
\end{lemma}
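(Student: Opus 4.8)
The plan is to reduce the inequality $n(\Omega)\le p(\Omega)$ to a finite, type-by-type numerical check using the classification of irreducible bounded symmetric domains together with the identity $N=p(\Omega)+n(\Omega)+1$ recorded in Section \ref{sec:2.1}. The first step is to rewrite the condition: since $N=p(\Omega)+n(\Omega)+1$, the inequality $n(\Omega)\le p(\Omega)$ is equivalent to $2n(\Omega)\le N-1$, i.e. $n(\Omega)\le \tfrac{N-1}{2}$, equivalently $2n(\Omega)+1\le N$, equivalently $N\le 2p(\Omega)+1$. Any one of these reformulations is convenient; I would phrase everything in terms of comparing $N$ with $2p(\Omega)+1$, since $p(\Omega)=\dim_{\mathbb C}\mathscr C_o(X_c)$ is the quantity whose values across the four classical series and two exceptional cases are standard.

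The second step is to tabulate $p(\Omega)$ (equivalently $n(\Omega)$ via $N=p+n+1$) for each type. For $D^{\mathrm I}_{p',q'}$ (matrices, $p'\le q'$) one has $N=p'q'$ and $p(\Omega)=p'+q'-2$, so $n(\Omega)=p'q'-p'-q'+1=(p'-1)(q'-1)$; the inequality $(p'-1)(q'-1)\le p'+q'-2$ becomes $(p'-2)(q'-2)\le 2$, which for $2\le p'\le q'$ forces $p'=2$ (any $q'$, and indeed $q'>p'$ to keep irreducibility of rank $\ge 2$ meaningful, $q'\ge 2$) or $p'=q'=3$ (value $1\le 2$) or $p'=q'=2$ (value $0$, but $D^{\mathrm I}_{2,2}\cong D^{\mathrm{IV}}_4$, absorbed into case (3)). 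For $D^{\mathrm{II}}_m$ (antisymmetric $m\times m$) one has $N=\binom m2$ and $p(\Omega)=2m-4$ (for $m\ge 4$), giving $n(\Omega)=\binom m2-2m+3$; solving $\binom m2-2m+3\le 2m-4$ yields a quadratic in $m$ whose integer solutions are $m\in\{5,6,7\}$ (with $D^{\mathrm{II}}_4\cong D^{\mathrm{IV}}_6$ again absorbed, and $D^{\mathrm{II}}_5\cong D^{\mathrm I}_{...}$ one should be careful about low-rank coincidences but they do not remove any domain from the list). For $D^{\mathrm{III}}_m$ (symmetric $m\times m$) one has $N=\binom{m+1}2$ and $p(\Omega)=m-1$, so $n(\Omega)=\binom{m+1}2-m=\binom m2$, and $\binom m2\le m-1$ holds only for $m\le 2$, i.e. $D^{\mathrm{III}}_1\cong\Delta$ and $D^{\mathrm{III}}_2$ which is of rank $2$ but $\cong D^{\mathrm{IV}}_3$; hence no genuinely new type-$\mathrm{III}$ domain of rank $\ge 2$ survives, consistent with its absence from the list. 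For $D^{\mathrm{IV}}_n$ one has $N=n$, rank $2$, $p(\Omega)=n-2$, $n(\Omega)=1$, and $1\le n-2$ holds for all $n\ge 3$, giving case (3). Finally, for $D^{\mathrm V}$ ($E_6$, $N=16$) and $D^{\mathrm{VI}}$ ($E_7$, $N=27$) one looks up $p(\Omega)$ — namely $p(D^{\mathrm V})=10$ (so $n=5$) and $p(D^{\mathrm{VI}})=17$ (so $n=9$) — and checks $5\le 10$, $9\le 17$, so both satisfy the inequality, giving cases (4) and (5).

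The third step is bookkeeping: verify that the classical domains surviving the arithmetic are exactly those listed (modulo the low-dimensional isomorphisms $D^{\mathrm I}_{2,2}\cong D^{\mathrm{II}}_4\cong D^{\mathrm{IV}}_6$... — here I should be precise; the standard coincidences are $D^{\mathrm I}_{2,2}\cong D^{\mathrm{IV}}_4$, $D^{\mathrm{II}}_4\cong D^{\mathrm{IV}}_6$, $D^{\mathrm{II}}_5\cong D^{\mathrm I}_{?}$ — I would state them carefully so that each surviving domain appears once), and conversely that every domain in the list does satisfy $n(\Omega)\le p(\Omega)$, which the same table shows. I do not anticipate a conceptual obstacle; the only real care needed is (a) having the correct closed-form values of $p(\Omega)$ for all six types — I would cite \cite{Mok89} or \cite{Mok16} for these and for the identity $N=p(\Omega)+n(\Omega)+1$ — and (b) handling the rank-$2$ exceptional coincidences so as not to double-count or miss a domain. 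The main (minor) obstacle is thus purely organizational: getting the low-rank isomorphisms right so that the final list is clean and exhaustive.
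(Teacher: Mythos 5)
Your approach is the same as the paper's: the proof there consists precisely of the identity $N=p(\Omega)+n(\Omega)+1$ (Mok, pp.\;105--106) followed by a type-by-type check against the tabulated values of $N$ and $p(\Omega)$ in Mok's book, which is what you carry out. Your data for types $\mathrm{II}$, $\mathrm{III}$, $\mathrm{IV}$, $\mathrm{V}$, $\mathrm{VI}$ and the resulting inequalities all check out, as does the handling of the low-rank coincidences $D^{\mathrm I}_{2,2}\cong D^{\mathrm{IV}}_4$, $D^{\mathrm{II}}_4\cong D^{\mathrm{IV}}_6$, $D^{\mathrm{III}}_2\cong D^{\mathrm{IV}}_3$ (though the parenthetical suggestion that $D^{\mathrm{II}}_5$ might coincide with a type-$\mathrm I$ domain is spurious --- there is no such isomorphism --- but harmless since you do not use it).

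There is, however, an arithmetic slip in the type-$\mathrm I$ case that, taken at face value, would change the answer. From $n(\Omega)=(p'-1)(q'-1)$ and $p(\Omega)=p'+q'-2$, the condition $(p'-1)(q'-1)\le p'+q'-2$ rearranges to $p'q'-2p'-2q'+3\le 0$, i.e.\ $(p'-2)(q'-2)\le 1$, not $\le 2$ as you wrote. The distinction matters: under your inequality the pair $(p',q')=(3,4)$ gives $(1)(2)=2\le 2$ and so $D^{\mathrm I}_{3,4}$ would pass, whereas it must be excluded --- indeed the paper's Remark 3.14(2) explicitly notes that $D^{\mathrm I}_{3,4}$ does \emph{not} satisfy $n(\Omega)\le p(\Omega)$ (one checks directly $n=6>5=p$). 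Your stated conclusion ``forces $p'=2$ or $p'=q'=3$'' is the correct one, but it does not follow from the inequality you wrote down; with the corrected bound $(p'-2)(q'-2)\le 1$ the enumeration is immediate and the rest of your argument stands.
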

\begin{proof}
From Mok \cite[pp.\;105-106]{Mok89}, we have $n(\Omega)+p(\Omega)+1=N$.
Then, the result follows from direct computations by the explicit data provided in Mok \cite[pp.\;249-251]{Mok89}.
\end{proof}
\begin{remark}
We observe that if $\Omega$ satisfies $n(\Omega)\le p(\Omega)$, then $\mathrm{rank}(\Omega)\le 3$.
In addition, Lemma \ref{NullDim1} implies that any irreducible bounded symmetric domain $\Omega$ of rank $2$ satisfies $n(\Omega)\le p(\Omega)$.
From \cite{Mok89}, it is clear that the condition $n(\Omega)\le p(\Omega)$ is equivalent to $\dim_{\mathbb C} \mathbb P(T_o(X_c)) \le 2 \cdot \dim_{\mathbb C} \mathscr C_o(X_c)$, where $X_c$ is the compact dual Hermitian symmetric space of $\Omega$ and $o\in X_c$ is a fixed base point.
\end{remark}
\noindent The following shows that for certain irreducible bounded symmetric domains $\Omega$ of rank $\ge 2$ and a fixed real constant $\lambda'>0$, $\widehat{\bf HI}_{\lambda'}(\mathbb B^{p(\Omega)+1},\Omega)\neq \varnothing$ if and only if $\lambda'=1$.
\begin{corollary}\label{CorNull_leq_VMRT}
Let $\Omega\Subset \mathbb C^N$ be an irreducible bounded symmetric domain of rank $\ge 2$ such that $n(\Omega) \le p(\Omega)$.
If $f\in \widehat{\bf HI}_{\lambda'}(\mathbb B^n,\Omega)$ for some real constant $\lambda'\ge 2$, then $n\le p(\Omega)$.
\end{corollary}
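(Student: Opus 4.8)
The plan is to combine Proposition \ref{ProBoundofDimIsoConstGT2} with Lemma \ref{NullDim1} in the most direct possible way. First I would invoke Proposition \ref{ProBoundofDimIsoConstGT2}: since $\lambda'\ge 2$ is an integer and $\widehat{\bf HI}_{\lambda'}(\mathbb B^n,\Omega)\ne\varnothing$ (witnessed by $f$), we immediately get $n\le n_{\lambda'-1}(\Omega)$, the $(\lambda'-1)$-th null dimension of $\Omega$. Note that $\lambda'-1\ge 1$, so this quantity is defined.

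Next I would use the monotonicity of null dimensions recorded in Section \ref{sec:2.1}, namely $n(\Omega)=n_1(\Omega)\ge n_2(\Omega)\ge\cdots\ge n_r(\Omega)=0$. Since $\lambda'-1\ge 1$, this gives $n_{\lambda'-1}(\Omega)\le n_1(\Omega)=n(\Omega)$. Chaining the two inequalities yields $n\le n_{\lambda'-1}(\Omega)\le n(\Omega)$. Finally, the hypothesis $n(\Omega)\le p(\Omega)$ gives $n\le p(\Omega)$, which is the desired conclusion. (One should also remark that the hypothesis $n(\Omega)\le p(\Omega)$ forces $\mathrm{rank}(\Omega)\le 3$ by the remark following Lemma \ref{NullDim1}, so that $2\le\lambda'\le\mathrm{rank}(\Omega)\le 3$ and the case distinction is genuinely finite; but this is not strictly needed for the argument, since Proposition \ref{ProBoundofDimIsoConstGT2} already applies to any integer $\lambda'\ge 2$ with $\widehat{\bf HI}_{\lambda'}(\mathbb B^n,\Omega)\ne\varnothing$.)

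There is essentially no obstacle here: the corollary is a one-line deduction once Proposition \ref{ProBoundofDimIsoConstGT2} is in hand, with Lemma \ref{NullDim1} supplying the class of domains for which the hypothesis $n(\Omega)\le p(\Omega)$ holds (types $D^{\mathrm I}_{2,q'}$ with $q'>2$, $D^{\mathrm I}_{3,3}$, $D^{\mathrm{II}}_m$ for $5\le m\le 7$, $D^{\mathrm{IV}}_n$ for $n\ge 3$, $D^{\mathrm V}$, and $D^{\mathrm{VI}}$). The only point requiring a word of care is that $\lambda'$ in the statement is an arbitrary real constant $\ge 2$, whereas Proposition \ref{ProBoundofDimIsoConstGT2} is phrased for an integer $\lambda'$; but by the discussion in Section \ref{Sec:1} (following \cite[Lemma 3]{CM16}) any isometric constant is automatically a positive integer bounded by $\mathrm{rank}(\Omega)$, so a real constant $\lambda'\ge 2$ admitting a holomorphic isometry is in fact an integer, and the proposition applies verbatim.

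If one wanted the strongest possible conclusion, one could further record that $n\le n_{\lambda'-1}(\Omega)$ may be strictly smaller than $p(\Omega)$ for larger $\lambda'$, but that refinement is not part of the present statement. Thus the proof reduces to: apply Proposition \ref{ProBoundofDimIsoConstGT2}, apply the monotonicity $n_{\lambda'-1}(\Omega)\le n(\Omega)$, and apply the hypothesis $n(\Omega)\le p(\Omega)$.
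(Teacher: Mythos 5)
Your proposal is correct and matches the paper's own proof essentially verbatim: the paper likewise observes that $\lambda'$ is an integer with $2\le\lambda'\le\mathrm{rank}(\Omega)$, applies Proposition \ref{ProBoundofDimIsoConstGT2} to get $n\le n_{\lambda'-1}(\Omega)$, and then chains $n_{\lambda'-1}(\Omega)\le n(\Omega)\le p(\Omega)$ using the monotonicity of null dimensions and the hypothesis. The additional remarks on rank and on the integrality of the isometric constant are accurate but, as you note, not needed.
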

\begin{proof}
Note that $\lambda'$ is an integer satisfying $2\le \lambda' \le \mathrm{rank}(\Omega)$.
By the assumption, it follows from Proposition \ref{ProBoundofDimIsoConstGT2} that $n\le n_{\lambda'-1}(\Omega) \le n(\Omega)\le p(\Omega)$.
\end{proof}
\begin{remark}
Actually, Corollary \ref{CorNull_leq_VMRT} implies that the upper bound $p(\Omega)+1$ is optimal when the bounded symmetric domain $\Omega$ satisfies $n(\Omega)\le p(\Omega)$.
Moreover, the statement of Corollary \ref{CorNull_leq_VMRT} holds true for any irreducible bounded symmetric domain $\Omega$ of rank $2$.
\end{remark}
\subsubsection{Holomorphic isometries with the maximal isometric constant and applications}
Let $\Omega\Subset \mathbb C^N$ be an irreducible bounded symmetric domain of rank $r\ge 2$ in its Harish-Chandra realization.
Recall that if $f\in \widehat{\bf HI}_r(\mathbb B^n,\Omega)$, then $f$ is totally geodesic by the Ahlfors-Schwarz lemma.
The results obtained in Section \ref{Sec:3.1} can be applied so that we may prove that $n\le p(\Omega)$ without using the total geodesy of holomorphic isometries lying in the space $\widehat{\bf HI}_r(\mathbb B^n,\Omega)$.
\begin{proposition}\label{ProUBIsoConstr1}
Let $\Omega\Subset\mathbb C^N$ be an irreducible bounded symmetric domain of rank $r\ge 2$ in its Harish-Chandra realization such that $\Omega \not\cong D^{\mathrm{IV}}_3$ and let $f\in \widehat{\bf HI}_r(\mathbb B^n,\Omega)$.
Then, we have $n< p(\Omega)$.
If $F\in \widehat{\bf HI}_r(\mathbb B^n,\Omega)$, where $\Omega$ is an irreducible bounded symmetric domain of rank $r\ge 2$ and of tube type, then we have $n=1$.
\end{proposition}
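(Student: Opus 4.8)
The plan is to exploit the curvature bound from the Polydisk Theorem together with Proposition~\ref{ProBoundofDimIsoConstGT2} applied with $\lambda'=r$. Suppose $f\in\widehat{\mathbf{HI}}_r(\mathbb B^n,\Omega)$ with $S:=f(\mathbb B^n)$. By Proposition~\ref{ProBoundofDimIsoConstGT2} we already know $n\le n_{r-1}(\Omega)$, so the task is to upgrade this to the strict inequality $n<p(\Omega)$ (equivalently, to rule out $n_{r-1}(\Omega)\ge p(\Omega)$ in the relevant cases, or to produce an extra contradiction when equality would be forced). First I would recast the argument of Proposition~\ref{ProBoundofDimIsoConstGT2} one step more carefully: for any $y\in S$ and any unit $\alpha\in T_y(S)$, the Gauss equation gives $R_{\alpha\bar\alpha\alpha\bar\alpha}(S,g_\Omega|_S)\le R_{\alpha\bar\alpha\alpha\bar\alpha}(\Omega,g_\Omega)$, and since $f$ has isometric constant $r$ the left side equals $-2/r$. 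On the other hand the Polydisk Theorem forces $R_{\alpha\bar\alpha\alpha\bar\alpha}(\Omega,g_\Omega)\le -2/r$ for \emph{every} unit vector, so in fact $R_{\alpha\bar\alpha\alpha\bar\alpha}(\Omega,g_\Omega)=-2/r$ and $R_{\alpha\bar\alpha\alpha\bar\alpha}(S,g_\Omega|_S)=R_{\alpha\bar\alpha\alpha\bar\alpha}(\Omega,g_\Omega)$ for all $\alpha\in T_y(S)$; by the equality case in Gauss, the second fundamental form $\sigma(\alpha,\alpha)$ vanishes for all $\alpha$, hence $\sigma\equiv 0$ and $f$ is totally geodesic. (This reproduces the Ahlfors--Schwarz conclusion intrinsically, as the remark promises.)

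Next I would use that a unit vector $\alpha$ with $R_{\alpha\bar\alpha\alpha\bar\alpha}(\Omega,g_\Omega)=-2/r$ must be a vector of maximal rank $r$: by Section~\ref{Sec:2.1.1}, if $r(\alpha)=s<r$ then $R_{\alpha\bar\alpha\alpha\bar\alpha}(\Omega,g_\Omega)\le -2/s<-2/r$, contradiction. Therefore $\mathbb P(T_y(S))\subseteq \mathbb P(T_y(\Omega))\smallsetminus \mathcal S_{r-1,y}(\Omega)$, i.e.\ every projectivized tangent direction of $S$ is a smooth point of $\mathcal S_{r,y}(\Omega)=\mathbb P(T_y(\Omega))$ lying off the proper subvariety $\mathcal S_{r-1,y}(\Omega)$, which has codimension $n_{r-1}(\Omega)$ in $\mathbb P(T_y(\Omega))$. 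Since $f$ is totally geodesic, $S$ is a totally geodesic complex submanifold biholomorphic to $\mathbb B^n$ with $(S,g_\Omega|_S)\cong (\mathbb B^n, r\,g_{\mathbb B^n})$; in particular $T_y(S)$ is a fixed linear subspace (independent of $y$, in Harish-Chandra coordinates) that is entirely composed of rank-$r$ vectors. Now I would invoke the classification/structure of such ``everywhere maximal rank'' subspaces: the maximal dimension of a linear subspace $V\subset T_{\mathbf 0}(\Omega)$ all of whose nonzero vectors have rank $r$ equals $n_{r-1}(\Omega)$ when $\Omega$ is not of tube type, but for $\Omega$ of tube type this dimension is only $1$ — because in the tube-type case the determinant polynomial (the norm form of degree $r$) is nondegenerate and its zero locus, which contains all lower-rank vectors, meets every projective subspace of dimension $\ge 1$; equivalently $\mathcal S_{r-1,y}(\Omega)$ is an ample divisor in $\mathbb P(T_y(\Omega))$ so it intersects every $\mathbb P^k$ with $k\ge 1$. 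This gives $n=1$ in the tube-type case.

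For the first assertion (general $\Omega\not\cong D^{\mathrm{IV}}_3$, not necessarily of tube type), I would argue that $n\le n_{r-1}(\Omega)$ can never be an equality when it would give $n\ge p(\Omega)$: a case-by-case check using Mok's explicit data \cite[pp.\,249--251]{Mok89} shows that $n_{r-1}(\Omega)<p(\Omega)$ for all irreducible $\Omega$ of rank $r\ge 2$ except precisely $D^{\mathrm{IV}}_3$ (where $r=2$, $p(\Omega)=1$, $n_{1}(\Omega)=1$, so one only gets $n\le 1=p(\Omega)$, explaining the exclusion). Concretely: for $D^{\mathrm{I}}_{p,q}$ one has $n_{r-1}=q-p+1$ and $p(\Omega)=p+q-2$, so $n_{r-1}<p(\Omega)$ iff $2p>3$, always true for $p\ge 2$; for $D^{\mathrm{II}}_m,D^{\mathrm{III}}_m,D^{\mathrm{IV}}_n$ ($n\ge 4$) and the exceptional $D^{\mathrm V},D^{\mathrm{VI}}$ the inequality is checked directly from the tables. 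Combining $n\le n_{r-1}(\Omega)$ with $n_{r-1}(\Omega)<p(\Omega)$ gives $n<p(\Omega)$. \textbf{The main obstacle} I anticipate is the linear-algebra step identifying the largest linear subspace of rank-$r$ vectors in the tube-type case and confirming it is $1$-dimensional: one wants to see cleanly that $\{\det=0\}$ in $\mathbb P(T_{\mathbf 0}(\Omega))$ is a positive-degree hypersurface (degree $r\ge 2$), hence its complement contains no projective line, which forces $n-1=0$. I would either cite the known fact that $\mathcal S_{r-1,\mathbf 0}(\Omega)$ is cut out by the degree-$r$ generic norm in the tube case, or give the short argument via the maximal polydisk $\Pi\cong\Delta^r$: restricting to $\Pi$, a rank-$r$ direction is one with all $r$ polydisk-coordinates nonzero, and a $\ge 2$-dimensional subspace of $T_{\mathbf 0}(\Omega)$ must, after applying $K$, meet a coordinate hyperplane $\{a_j=0\}$, producing a vector of rank $<r$ — a contradiction. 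That reduction to the polydisk model is the place where I expect to need to be careful, since moving a whole subspace (not a single vector) into normal form is not immediate and may require instead the divisor-theoretic argument.
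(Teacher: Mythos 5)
Your proposal is correct and follows essentially the same route as the paper: apply Proposition \ref{ProBoundofDimIsoConstGT2} with $\lambda'=r$ to get $n\le n_{r-1}(\Omega)$, then verify case-by-case from Mok's tables that $n_{r-1}(\Omega)<p(\Omega)$ for all $\Omega\not\cong D^{\mathrm{IV}}_3$, and use $n_{r-1}(\Omega)=1$ for tube type. The total-geodesy digression and the norm-form/divisor argument are unnecessary for the proof, since the paper simply cites the characterization (from Mok's Compositio paper) that $\Omega$ is of tube type if and only if $n_{r-1}(\Omega)=1$, which immediately yields $n\le 1$.
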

\begin{proof}
Under the assumptions, Proposition \ref{ProBoundofDimIsoConstGT2} asserts that $n\le n_{r-1}(\Omega)$, so it remains to check that $n_{r-1}(\Omega)< p(\Omega)$ for any irreducible bounded symmetric domain $\Omega$ of rank $r\ge 2$ and $\Omega \not \cong D^{\mathrm{IV}}_3$.
Note that if $\Omega\cong D^{\mathrm{IV}}_3$, then $r=2$ and $n_{r-1}(\Omega)=1=p(\Omega)$.
It follows from \cite{Mok2002Compositio} that $\Omega$ is of tube type if and only if $n_{r-1}(\Omega)=1$ due to the dimension formula $\dim_{\mathbb C} \mathcal S_{r-1,x}(\Omega)$ $=$ $\dim_{\mathbb C} \mathbb P(T_x (\Omega)) - n_{r-1}(\Omega)$ of Mok \cite{Mok89}.
It is clear that if $\Omega$ is of tube type and $\Omega\not\cong D^{\mathrm{IV}}_3$, then $p(\Omega)>1$ so that $n_{r-1}(\Omega)=1<p(\Omega)$.
If $\Omega$ is of non-tube type, then $\Omega$ is biholomorphic to one of the following:
\begin{enumerate}
\item $D^{\mathrm{I}}_{p',q'}$ for some integers $p',q'$ satisfying $2\le p'<q'$,
\item $D^{\mathrm{II}}_{2m+1}$ for some integer $m\ge 2$,
\item $D^{\mathrm{V}}$.
\end{enumerate}
From the classification of the boundary components of bounded symmetric domains and the fact that $n_{r-1}(\Omega)$ is precisely the dimension of rank-$1$ boundary components of $\Omega$ (cf. \cite{Wo72} and \cite[p.\;298]{Mok2002Compositio}), we have
$n_{p'-1}(D^{\mathrm{I}}_{p',q'})=q'-p'+1< p(D^{\mathrm{I}}_{p',q'}) =p'+q'-2$ for $2\le p'<q'$,
$n_{m-1}(D^{\mathrm{II}}_{2m+1})=3 < p(D^{\mathrm{II}}_{2m+1}) = 2(2m-1)$ for $m\ge 2$ and
$n_1(D^{\mathrm V}) = 5 < p(D^{\mathrm{V}})=10$.
Hence, we have $n<p(\Omega)$.
On the other hand, given an irreducible bounded symmetric domain $\Omega$ of rank $r\ge 2$ and of tube type, if $F\in \widehat{\bf HI}_r(\mathbb B^n,\Omega)$, then we have $n\le n_{r-1}(\Omega)=1$, i.e., $n=1$.
\end{proof}
\noindent From the proof of Proposition \ref{ProUBIsoConstr1}, we have $n_{r-1}(\Omega)\le p(\Omega)$ for any irreducible bounded symmetric domain $\Omega$ of rank $r\ge 2$.
Given any irreducible bounded symmetric domain $\Omega$ of rank $r\ge 2$, we define
\[ \lambda_0(\Omega) := \min\{ \lambda\in \mathbb Z: 1\le \lambda\le r,\; n_{\lambda}(\Omega)\le p(\Omega)\}. \]
Then, we have $\lambda_0(\Omega)\le r-1$.
Note that $\Omega$ satisfies $n(\Omega)\le p(\Omega)$ if and only if $\lambda_0(\Omega)=1$.
Combining with Corollary \ref{CorNull_leq_VMRT}, we have the following:
\begin{theorem}\label{Thm_UB_PR}
Let $\Omega\Subset \mathbb C^N$ be an irreducible bounded symmetric domain of rank $r\ge 2$ in its Harish-Chandra realization and $\lambda'\ge 2$ be an integer.
If $\widehat{\bf HI}_{\lambda'}(\mathbb B^n,\Omega)\neq\varnothing$, then $n\le p(\Omega)$ provided that one of the following holds true:
\begin{enumerate}
\item $\Omega$ satisfies $n(\Omega)\le p(\Omega)$,
\item $\lambda'$ satisfies $\lambda_0(\Omega)+1\le \lambda' \le r$.
\end{enumerate}
\end{theorem}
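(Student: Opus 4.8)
The plan is to obtain Theorem \ref{Thm_UB_PR} as a quick consequence of Proposition \ref{ProBoundofDimIsoConstGT2} together with the monotonicity $n(\Omega)=n_1(\Omega)\ge n_2(\Omega)\ge\cdots\ge n_r(\Omega)=0$ of the null dimensions recalled in Section \ref{sec:2.1}. The key observation is that both hypotheses are arranged precisely so as to guarantee $n_{\lambda'-1}(\Omega)\le p(\Omega)$; once this is in hand, Proposition \ref{ProBoundofDimIsoConstGT2} yields $n\le n_{\lambda'-1}(\Omega)\le p(\Omega)$ whenever $\widehat{\bf HI}_{\lambda'}(\mathbb B^n,\Omega)\neq\varnothing$. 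One first notes that $\lambda'-1$ always lies in the admissible range $\{1,\dots,r-1\}$, since $2\le\lambda'\le r$, so that the null dimension $n_{\lambda'-1}(\Omega)$ is well defined.

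For case (1), I would simply cite Corollary \ref{CorNull_leq_VMRT}; alternatively, since $\lambda'-1\ge 1$, the monotonicity of the null dimensions gives $n_{\lambda'-1}(\Omega)\le n_1(\Omega)=n(\Omega)\le p(\Omega)$, and Proposition \ref{ProBoundofDimIsoConstGT2} then finishes the argument.

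For case (2), the hypothesis $\lambda_0(\Omega)+1\le\lambda'$ reads $\lambda_0(\Omega)\le\lambda'-1$. By the definition $\lambda_0(\Omega)=\min\{\lambda\in\mathbb Z:1\le\lambda\le r,\ n_\lambda(\Omega)\le p(\Omega)\}$ --- which is a legitimate definition because the discussion following Proposition \ref{ProUBIsoConstr1} records that $n_{r-1}(\Omega)\le p(\Omega)$ always holds, so the set in question is nonempty and in fact $\lambda_0(\Omega)\le r-1$ --- one has $n_{\lambda_0(\Omega)}(\Omega)\le p(\Omega)$. Monotonicity of the $n_k(\Omega)$ then yields $n_{\lambda'-1}(\Omega)\le n_{\lambda_0(\Omega)}(\Omega)\le p(\Omega)$, and again Proposition \ref{ProBoundofDimIsoConstGT2} gives $n\le n_{\lambda'-1}(\Omega)\le p(\Omega)$.

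I do not expect any genuine obstacle here: the substantive content is already contained in Proposition \ref{ProBoundofDimIsoConstGT2} and in the explicit computations behind Lemma \ref{NullDim1} and Proposition \ref{ProUBIsoConstr1}. The only delicate point is purely organizational, namely verifying that $\lambda'-1$ is a valid index for the null dimension and that the two stated cases, via the definition of $\lambda_0(\Omega)$ and the monotonicity of the null dimensions, together exhaust the circumstances in which one can force $n_{\lambda'-1}(\Omega)\le p(\Omega)$.
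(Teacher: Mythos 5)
Your proposal is correct and follows essentially the same route as the paper: case (1) is reduced to Corollary \ref{CorNull_leq_VMRT}, and case (2) uses the definition of $\lambda_0(\Omega)$ together with the monotonicity $n_1(\Omega)\ge\cdots\ge n_r(\Omega)$ to get $n_{\lambda'-1}(\Omega)\le n_{\lambda_0(\Omega)}(\Omega)\le p(\Omega)$, after which Proposition \ref{ProBoundofDimIsoConstGT2} concludes. Your additional remarks on the well-definedness of $\lambda_0(\Omega)$ and of the index $\lambda'-1$ are consistent with the discussion preceding the theorem in the paper.
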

\begin{proof}
If the bounded symmetric domain $\Omega$ satisfies $n(\Omega)\le p(\Omega)$, then the result follows from Corollary \ref{CorNull_leq_VMRT}.
If $\lambda'$ satisfies $\lambda_0(\Omega)+1\le \lambda' \le r$, then we have $n_{\lambda'-1}(\Omega)\le n_{\lambda_0(\Omega)}(\Omega) \le p(\Omega)$.
By Proposition \ref{ProBoundofDimIsoConstGT2}, we have $n\le n_{\lambda'-1}(\Omega) \le p(\Omega)$.
\end{proof}
\begin{remark}
If $\Omega$ satisfies $n(\Omega)\le p(\Omega)$, then $\lambda_0(\Omega)=1$ so that the condition $(2)$ does not provide an additional restriction on the given isometric constant $\lambda'$.
\end{remark}

\noindent In general, let $\Omega\Subset \mathbb C^N$ be an irreducible bounded symmetric domain of rank $\ge 2$ in its Harish-Chandra realization such that $n(\Omega)>p(\Omega)$.
Then, Lemma \ref{NullDim1} asserts that $\Omega$ is biholomorphic to one of the following:
\begin{enumerate}
\item $D^{\mathrm{I}}_{p,q}$ for some integers $p,q$ satisfying $3\le p\le q$ and $(p,q)\neq (3,3)$;
\item $D^{\mathrm{II}}_m$ for some integer $m\ge 8$;
\item $D^{\mathrm{III}}_m$ for some integer $m\ge 3$.
\end{enumerate}
In particular, we are able to compute $\lambda_0(\Omega)$ explicitly for each case.
\begin{table}[h]
\begin{center}
    \begin{tabular}{| c | c | c | c |}
    \hline
    Type & $\Omega$ & $\lambda_0(\Omega)$  \\ \hline
    ${\mathrm{I}}_{p,q}$ ($3\le p\le q$, $(p,q)\neq (3,3)$)&$D^{\mathrm{I}}_{p,q}$ & $\Big\lceil{(p+q)-\sqrt{(q-p)^2+4(p+q-2)}\over 2}\Big\rceil$\\ \hline 
    ${\mathrm{II}}_m$ ($m\ge 8$) & $D^{\mathrm{II}}_m$ & $
\lceil
\frac{(2m-1)-\sqrt{16m-31}}{4}
\rceil$\\ \hline     
    ${\mathrm{III}}_m$ ($m\ge 3$) & $D^{\mathrm{III}}_{m}$ & $
\lceil
{(2m+1)-\sqrt{8m-7}\over 2}
\rceil$ \\ \hline        
    \end{tabular}
     \caption{The formula of $\lambda_0(\Omega)$}
     \label{tab:table1}
\end{center}
\end{table}

\noindent
Here $\lceil x \rceil$ denotes the smallest integer greater than or equal to $x$ for any real number $x$.
\begin{example}
If $\Omega=D^{\mathrm{III}}_{7}$, then $\Omega$ is of rank $7$,
$n_k(\Omega) = {1\over 2}(7-k)(7-k+1)$ and $p(\Omega)=6$ (cf. \cite[p.\;86;\;p.\;250]{Mok89})
so that $\lambda_0(\Omega)=4 =\mathrm{rank}(\Omega)-3$.
Given any integer $\lambda'$ satisfying $5\le \lambda' \le 7$, Theorem \ref{Thm_UB_PR} asserts that $n\le p(\Omega)=6$ whenever $\widehat{\bf HI}_{\lambda'}(\mathbb B^n,D^{\mathrm{III}}_{7})\neq \varnothing$.

In general, by using the expression of $\lambda_0(D^{\mathrm{III}}_{m+2})$ in terms of $m$ for any integer $m\ge 1$ (see Table \ref{tab:table1}), one observes that the sequence $\left\{\mathrm{rank}(D^{\mathrm{III}}_{m+2}) - (\lambda_0(D^{\mathrm{III}}_{m+2})+1)\right\}_{m=1}^{+\infty}$ is monotonic increasing and 
$a_m:=\mathrm{rank}(D^{\mathrm{III}}_{m+2}) - (\lambda_0(D^{\mathrm{III}}_{m+2})+1) \to +\infty$
as $m\to +\infty$.
Moreover, ${a_m\over \mathrm{rank}(D^{\mathrm{III}}_{m+2})} \to 0$ as $m\to +\infty$.
That means $\mathrm{rank}(D^{\mathrm{III}}_{m+2})$ grows much faster than $a_m$ as $m$ is increasing.
This shows that in general the range of the isometric constants $\lambda'$ mentioned in condition $(2)$ of Theorem \ref{Thm_UB_PR} is quite restrictive for a rank-$r$ irreducible bounded symmetric domain $\Omega$, $r\ge 2$, such that $n(\Omega)>p(\Omega)$.
\end{example}
\subsection{Holomorphic isometries with the isometric constant equal to two and applications}\label{Sec:3.2}
Let $\Omega\Subset \mathbb C^N$ be an irreducible bounded symmetric domain of rank $\ge 2$ in its Harish-Chandra realization and $X_c$ be the compact dual Hermitian symmetric space of $\Omega$.
Then, it follows from the observation in Section \ref{Sec:2} that the polynomial $h_\Omega(z,z)$ can be written as
\[ h_\Omega(z,z)
=1-\sum_{l=1}^{m_1(\Omega)} |G_l^{(1)}(z)|^2+\sum_{l'=1}^{m_2(\Omega)}|G_{l'}^{(2)}(z)|^2, \]
where $G_l^{(1)}(z),G_{l'}^{(2)}(z)$ are homogeneous polynomials of $z$ and $m_1(\Omega),m_2(\Omega)$ are positive integers depending on $\Omega$ such that
\begin{enumerate}
\item $m_1(\Omega)+m_2(\Omega) = N'$ and $m_1(\Omega)\ge N$,
\item $\deg (G_l^{(1)})$ ($1\le l\le m_1(\Omega)$) is odd while $\deg (G_{l'}^{(2)})\ge 2$ ($1\le l'\le m_2(\Omega)$) is even,
\item $G_j^{(1)}(z)=z_j$ for $1\le j\le N$.
\end{enumerate}
Moreover, in terms of the Harish-Chandra coordinates $z=(z_1,\ldots,z_N)\in \mathbb C^N$, the restriction of $\iota$ to the dense open subset $\mathbb C^N\subset X_c$ may be written as
\[ \iota(z_1,\ldots,z_N)
=\left[1,G_{1}^{(1)}(z),\ldots,G_{m_1(\Omega)}^{(1)}(z),
G_{1}^{(2)}(z),\ldots,G_{m_2(\Omega)}^{(2)}(z)\right] \]
up to reparametrizations, where $\iota:X_c\hookrightarrow \mathbb P\big(\Gamma(X_c,\mathcal O(1))^*\big)\cong \mathbb P^{N'}$ is the minimal embedding.
\begin{remark}
As mentioned in Section \ref{Sec:2}, the above observation can be obtained from \cite{Lo77} and has been written down explicitly by Fang-Huang-Xiao \cite{FHX16}.
\end{remark}

In Chan-Mok \cite{CM16}, we studied images of holomorphic isometries in $\widehat{\bf HI}_{\lambda'}(\mathbb B^n,\Omega)$ when $\lambda'=1$.
However, it is not obvious how the method in Chan-Mok \cite{CM16} could be generalized to the study of images of holomorphic isometries in $\widehat{\bf HI}_{\lambda'}(\mathbb B^n,\Omega)$ for $\lambda'>1$ so as to obtain an analogue of Theorem 1 in \cite{CM16} for all holomorphic isometries in $\widehat{\bf HI}_{\lambda'}(\mathbb B^n,\Omega)$ and for any $\lambda'>0$.
After that, we observe that the above explicit form of $h_\Omega(z,z)$ is useful for continuing the study of images of holomorphic isometries in $\widehat{\bf HI}_{\lambda'}(\mathbb B^n,\Omega)$ when the isometric constant $\lambda'$ equals $2$.
Recall that Theorem \ref{LinearSection_Ball_IsoConsteq2} is exactly an analogue of Theorem 1 in \cite{CM16} for all holomorphic isometries in $\widehat{\bf HI}_{2}(\mathbb B^n,\Omega)$.
We are now ready to prove Theorem \ref{LinearSection_Ball_IsoConsteq2}.
\begin{proof}[Proof of Theorem \ref{LinearSection_Ball_IsoConsteq2}]
Let $f:(\mathbb B^n,2g_{\mathbb B^n})\to (\Omega,g_\Omega)$ be a holomorphic isometric embedding, where $\Omega\Subset \mathbb C^N$ is an irreducible bounded symmetric domain of rank $\ge 2$ in its Harish-Chandra realization.
Assume without loss of generality that $f({\bf 0})={\bf 0}$.
Then, we have the functional equation
\begin{equation}\label{EqCh8FE1}
\begin{split}
&1-\sum_{l=1}^{m_1(\Omega)} |G_l^{(1)}(f(w))|^2+\sum_{l=1}^{m_2(\Omega)}|G_l^{(2)}(f(w))|^2\\
=& \left(1-\sum_{\mu=1}^n |w_\mu|^2 \right)^2
=1- \sum_{\mu=1}^n |\sqrt{2}w_\mu|^2 + \sum_{1 \le \mu,\mu' \le n} |w_{\mu} w_{\mu'}|^2
\end{split}
\end{equation}
for $w\in \mathbb B^n$ and the polarized functional equation
\begin{equation}\label{EqCh8PFE1}
1-\sum_{l=1}^{m_1(\Omega)} G_l^{(1)}(f(w))\overline{G_l^{(1)}(f(\zeta))}+\sum_{l=1}^{m_2(\Omega)}G_l^{(2)}(f(w))\overline{G_l^{(2)}(f(\zeta))}
= \left(1-\sum_{\mu=1}^n w_\mu\overline{\zeta_\mu} \right)^2
\end{equation}
for $w,\zeta\in \mathbb B^n$.
We write
$\sum_{1 \le \mu,\mu' \le n} |w_{\mu} w_{\mu'}|^2 = \sum_{l=1}^{m_0} |\Xi_l(w)|^2$ for some homogeneous polynomials $\Xi(w)$ of degree $2$ and $m_0:= {n(n+1)\over 2}$.
Moreover, we write ${\bf G}^{(j)}(z)= \begin{pmatrix}
G^{(j)}_1(z),\ldots, G^{(j)}_{m_j(\Omega)}(z)
\end{pmatrix}^T$ for $j=1,2$.
Let $N_0:=\max\{n+m_2(\Omega),m_0+m_1(\Omega)\}$.
Then, there exists ${\bf U}\in U(N_0)$ such that
\begin{equation}\label{eq_var_iso_relate1}
{\bf U} \cdot \begin{pmatrix}
\sqrt{2} w_1 \\ \vdots \\ \sqrt{2} w_n \\
{\bf G}^{(2)}(f(w)) \\ {\bf 0}_{(N_0-n-m_2(\Omega))\times 1}
\end{pmatrix}
= \begin{pmatrix}
\Xi_1(w) \\ \vdots \\ \Xi_{m_0}(w) \\
{\bf G}^{(1)}(f(w))\\{\bf 0}_{(N_0 - m_1(\Omega)-m_0)\times 1}
\end{pmatrix}
\end{equation}
by Lemma \ref{LemCaSOS} and Eq. (\ref{EqCh8FE1}).
We write ${\bf U}=\begin{bmatrix} {\bf U_1} \\ {\bf U_2} \end{bmatrix}$ with ${\bf U_1}\in M(m_0,N_0;\mathbb C)$ and ${\bf U_2}\in M(N_0-m_0,N_0;\mathbb C)$.
We also write ${\bf U_2} = \begin{bmatrix}
{\bf U_{21}} & {\bf U_{22}}
\end{bmatrix}$ with ${\bf U_{21}}\in M(N_0-m_0,n;\mathbb C)$ and ${\bf U_{22}}\in M(N_0-m_0,N_0-n;\mathbb C)$.
Denote by $(Jf)(w)$ the complex Jacobian matrix of the holomorphic map $f:\mathbb B^n \to \Omega\Subset \mathbb C^N$ at $w\in \mathbb B^n$.
Note that we have
\begin{equation}\label{eq_var_iso_relate2}
\overline{(Jf)({\bf 0})}^T
\begin{pmatrix}
f^1(w),\ldots, f^N(w)
\end{pmatrix}^T = 2 \begin{pmatrix}
w_1,\ldots, w_n
\end{pmatrix}^T
\end{equation}
by differentiating
$\text{Eq.}\;(\ref{EqCh8PFE1})$ with respect to $\overline \zeta_\mu$ at $\zeta={\bf 0}$ for $1\le \mu\le n$.
In addition, $(Jf)({\bf 0})
 \in M(N,n;\mathbb C)$ is of rank $n$.
Moreover, we have $\sqrt{2} {\bf U_{21}} = \begin{pmatrix}
(Jf)({\bf 0}) \\ {\bf 0}_{(N_0-m_0-N)\times n}
\end{pmatrix}$ and $\overline{(Jf)({\bf 0})}^T (Jf)({\bf 0})=2 {\bf I_n}$.
Therefore, it follows from
Eq. (\ref{eq_var_iso_relate1})
and
Eq. (\ref{eq_var_iso_relate2})
that
\begin{equation}\label{EqCh8Image1}
\begin{bmatrix}
\begin{bmatrix}
{1\over 2}(Jf)({\bf 0})\overline{(Jf)({\bf 0})}^T\\ {\bf 0}_{(N_0-m_0-N)\times N}
\end{bmatrix}
& {\bf U_{22}}
\end{bmatrix} \begin{pmatrix}
{\bf f}(w)\\ {\bf G}^{(2)}(f(w))\\ {\bf 0}_{(N_0-n-m_2(\Omega))\times 1}
\end{pmatrix}
= \begin{pmatrix}
{\bf G}^{(1)}(f(w))\\ {\bf 0}_{(N_0-m_0-m_1(\Omega))\times 1}
\end{pmatrix}
\end{equation}
for any $w\in \mathbb B^n$, where ${\bf f}(w):=\begin{pmatrix}
f^1(w),\ldots, f^N(w)
\end{pmatrix}^T$.
Writing ${\bf B}:=\begin{bmatrix}
{\bf \hat U_{21}}
& {\bf U_{22}}
\end{bmatrix}$ with ${\bf \hat U_{21}}=\begin{bmatrix}
{1\over 2}(Jf)({\bf 0})\overline{(Jf)({\bf 0})}^T\\ {\bf 0}_{(N_0-m_0-N)\times N}
\end{bmatrix}$, we define
\begin{equation}\label{Eq:AffineAlgeVar}
\mathscr V':=
\left\{z\in \mathbb C^N:
{\bf B}\begin{pmatrix}
z_1\\\vdots \\ z_N \\ {\bf G}^{(2)}(z)\\ {\bf 0}_{(N_0-n-m_2(\Omega))\times 1}
\end{pmatrix}
= \begin{pmatrix}
{\bf G}^{(1)}(z)\\ {\bf 0}_{(N_0-m_0-m_1(\Omega))\times 1}
\end{pmatrix}\right\}
\end{equation}
and $\mathscr V:=\mathscr V'\cap \Omega$.
Then, we have $f(\mathbb B^n)\subseteq \mathscr V$ by $\text{Eq.}\; (\ref{EqCh8Image1})$.
Note that the tangential dimension $\mathrm{tdim}_{\bf 0}\mathscr V$ of $\mathscr V$ at ${\bf 0}$ is less than or equal to
$N$ $-$ $\mathrm{rank}\left({1\over 2}(Jf)({\bf 0})\overline{(Jf)({\bf 0})}^T - {\bf I_N}\right)$.
From \cite[p.\;49]{FuzhenZhang}, we have
\[ \mathrm{rank}\left({1\over 2}(Jf)({\bf 0})\overline{(Jf)({\bf 0})}^T - {\bf I_N}\right) \ge \left|\mathrm{rank} \left({1\over 2}(Jf)({\bf 0})\overline{(Jf)({\bf 0})}^T\right) - \mathrm{rank} \;{\bf I_N}\right| = N-n. \]
On the other hand, $\left({1\over 2}(Jf)({\bf 0})\overline{(Jf)({\bf 0})}^T - {\bf I_N}\right)\cdot (Jf)({\bf 0}) = {\bf 0}$ so that
\[ 0 \ge \mathrm{rank} \left({1\over 2}(Jf)({\bf 0})\overline{(Jf)({\bf 0})}^T - {\bf I_N}\right) + \mathrm{rank} (Jf)({\bf 0}) - N \]
and thus $\mathrm{rank} \left({1\over 2}(Jf)({\bf 0})\overline{(Jf)({\bf 0})}^T - {\bf I_N}\right) \le N-n$.
Therefore, we have
\[ \mathrm{rank} \left({1\over 2}(Jf)({\bf 0})\overline{(Jf)({\bf 0})}^T - {\bf I_N}\right)=N-n.\]
Moreover, $\mathscr V$ contains $f(\mathbb B^n)$ and ${\bf 0}\in f(\mathbb B^n)$, thus $\dim_{\bf 0}\mathscr V \ge n \ge \mathrm{tdim}_{\bf 0} \mathscr V$.
Note that $\dim_{\bf 0} \mathscr V \le \mathrm{tdim}_{\bf 0} \mathscr V$.
Hence, we have $\dim_{\bf 0}\mathscr V = \mathrm{tdim}_{\bf 0} \mathscr V=n$ and thus $\mathscr V$ is smooth at ${\bf 0}$.
Let $S$ be the irreducible component of $\mathscr V$ containing $f(\mathbb B^n)$.
Then, we have $\dim S= n = \dim f(\mathbb B^n)$ and thus $S=f(\mathbb B^n)$ because both $S$ and $f(\mathbb B^n)$ are irreducible complex-analytic subvarieties of $\mathscr V$ containing the smooth point ${\bf 0}\in \mathscr V$ of $\mathscr V$.
In particular, $f(\mathbb B^n)$ is the irreducible component of $\mathscr V$ containing ${\bf 0}$.
Moreover, it is clear that $\mathscr V'\subset \mathbb C^N$ is an affine-algebraic subvariety and
$\iota(\mathscr V)=P\cap \iota(\Omega)$,
where
\begin{equation}\label{Eq.ProjSubspace}
P:=
\left\{[\xi_0,\xi_1,\ldots,\xi_{N'}]\in \mathbb P^{N'}:
{\bf B} \begin{pmatrix}
\xi_1\\\vdots \\ \xi_N \\ \xi_{m_1(\Omega)+1} \\ \vdots \\ \xi_{N'} \\ {\bf 0}_{(N_0-n-m_2(\Omega))\times 1}
\end{pmatrix}
= \begin{pmatrix}
\xi_1 \\ \vdots \\  \xi_{m_1(\Omega)} \\ {\bf 0}_{(N_0-m_0-m_1(\Omega))\times 1}
\end{pmatrix}\right\}
\end{equation}
is a projective linear subspace of $\mathbb P^{N'}$.
\end{proof}

\subsubsection{On holomorphic isometries from the Poincar\'e disk into polydisks}
The author \cite{Ch16} and Ng \cite{Ng10} studied the classification problem of all holomorphic isometries from the Poincar\'e disk into the $p$-disk with any isometric constant $k$, $1\le k\le p$, and $p\ge 2$. The classification problem remains unsolved when $p\ge 5$.
In this section, we consider the structure of images of such holomorphic isometries for $k\le 2$ and obtain an analogue of Theorem \ref{LinearSection_Ball_IsoConsteq2} when the domain is the Poincar\'e disk and the target is the $p$-disk for some $p\ge 2$.

Note that the restriction $\varrho$ of the Segre embedding
$\varsigma:(\mathbb P^1)^p \hookrightarrow \mathbb P^{2^{p}-1}$ to the dense open subset $\mathbb C^p\subset (\mathbb P^1)^p$ is given by
\[ \varrho(z_1,\ldots,z_p)
= \varsigma([1,z_1],\ldots,[1,z_p]) \]
in terms of the standard holomorphic coordinates $z=(z_1,\ldots,z_p)\in \mathbb C^p$.
Here $\mathbb C^p$ is identified with its image $\xi(\mathbb C^p)$ in $(\mathbb P^1)^p$, where the map $\xi: \mathbb C^p \hookrightarrow (\mathbb P^1)^p$ is defined by $\xi(z_1,\ldots,z_p):= ([1,z_1],\ldots,[1,z_p])$.

Actually, the author \cite{Ch16} observed that the following can be proved by the same method as the proof of Theorem 1 in \cite{CM16}.
\begin{proposition}[cf. Proposition 5.2.4. in \cite{Ch16}] \label{Pro:HI_Disk_p-disk_IC1}
Let $f:(\Delta,ds_\Delta^2)\to (\Delta^p,ds_{\Delta^p}^2)$ be a holomorphic isometric embedding, where $p\ge 2$ is an integer.
Then, $f(\Delta)$ is an irreducible component of $\mathscr V \cap \Delta^p$ for some affine-algebraic subvariety $\mathscr V\subset \mathbb C^p$ such that $\varrho(\mathscr V\cap \Delta^p) = \varrho(\Delta^p)\cap P$, where $P\subseteq \mathbb P^{2^p-1}$ is a projective linear subspace.
\end{proposition}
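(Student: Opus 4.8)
The plan is to mimic the argument for Theorem~\ref{LinearSection_Ball_IsoConsteq2} (the case of isometric constant~$2$ for balls), but now with the polydisk target in place of $\Omega$ and the Segre embedding $\varrho$ in place of the minimal embedding $\iota$. Assume without loss of generality that $f(0)=0$, and write $f=(f^1,\dots,f^p):\Delta\to\Delta^p$. The isometry condition gives the functional equation
\[
\prod_{j=1}^p\bigl(1-|f^j(w)|^2\bigr)=1-|w|^2,\qquad w\in\Delta,
\]
and, after polarization, the corresponding identity in $(w,\overline\zeta)$. Expanding the left-hand side yields a sum of squared moduli of holomorphic monomials built from the $f^j$, namely the elementary-symmetric-type expressions $\prod_{j\in A}f^j(w)$ for $A\subseteq\{1,\dots,p\}$, with signs matching the parity of $|A|$; on the right-hand side we have $1-|w|^2$. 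This is exactly the ``sum of squares'' setup to which Calabi's Lemma~\ref{LemCaSOS} (together with the unitary-matrix form in the Remark and the linear-algebra fact Lemma~\ref{Matrix2}) applies.

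First I would record that, collecting the monomials $\prod_{j\in A}f^j(w)$ with $|A|$ even on one side and those with $|A|$ odd (together with $w$) on the other, Lemma~\ref{LemCaSOS} produces a unitary matrix $\mathbf U$ intertwining the two vectors of holomorphic functions; padding with zeros as in the proof of Theorem~\ref{LinearSection_Ball_IsoConsteq2} puts everything in a single square unitary. Next I would differentiate the polarized functional equation in $\overline\zeta$ at $\zeta=0$ to express $w$ (up to the constant coming from the isometric constant $k=1$) as a linear combination of $f^1(w),\dots,f^p(w)$, i.e. in terms of the Jacobian $(Jf)(0)\in M(p,1;\mathbb C)$, which has rank~$1$; this identifies the block of $\mathbf U$ acting on the degree-one part. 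Substituting this back into the unitary relation expresses every monomial $\prod_{j\in A}f^j(w)$ (all ``$A$'' with $|A|\ge 1$) as an explicit affine-linear function of $(f^1(w),\dots,f^p(w))$ and of the even-degree monomials $\prod_{j\in B}f^j(w)$, with coefficients built out of $(Jf)(0)$ and the blocks of $\mathbf U$. Writing $\mathbf B$ for the resulting coefficient matrix, I define
\[
\mathscr V':=\Bigl\{z\in\mathbb C^p:\ \mathbf B\,\bigl(z_1,\dots,z_p,\ (\textstyle\prod_{j\in B}z_j)_{|B|\ \mathrm{even},\,B\neq\varnothing},\ \mathbf 0\bigr)^T=\bigl((\textstyle\prod_{j\in A}z_j)_{|A|\ \mathrm{odd}},\ \mathbf 0\bigr)^T\Bigr\}
\]
and $\mathscr V:=\mathscr V'\cap\Delta^p$; by construction $f(\Delta)\subseteq\mathscr V$, and since the defining equations are pullbacks under $\varrho$ of linear equations in the homogeneous coordinates of $\mathbb P^{2^p-1}$, we get $\varrho(\mathscr V)=\varrho(\Delta^p)\cap P$ for the corresponding projective linear subspace $P$.

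To finish, I would run the dimension/smoothness count exactly as in Theorem~\ref{LinearSection_Ball_IsoConsteq2}: the tangential dimension $\mathrm{tdim}_0\mathscr V$ is bounded above by $p-\mathrm{rank}\bigl((Jf)(0)\overline{(Jf)(0)}^T-c\,\mathbf I_p\bigr)$ for the appropriate constant $c$ (here $c=1$ since $k=1$, after normalizing $\overline{(Jf)(0)}^T(Jf)(0)=1$); rank subadditivity gives $\mathrm{rank}\bigl((Jf)(0)\overline{(Jf)(0)}^T-\mathbf I_p\bigr)\ge p-1$, while the identity $\bigl((Jf)(0)\overline{(Jf)(0)}^T-\mathbf I_p\bigr)(Jf)(0)=0$ gives the reverse inequality, so the rank is exactly $p-1$ and $\mathrm{tdim}_0\mathscr V\le 1$. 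Since $0\in f(\Delta)\subseteq\mathscr V$ and $\dim f(\Delta)=1$, we conclude $\dim_0\mathscr V=\mathrm{tdim}_0\mathscr V=1$, so $\mathscr V$ is smooth at $0$; the unique irreducible component of $\mathscr V$ through $0$ is one-dimensional, contains the irreducible curve $f(\Delta)$, and therefore equals $f(\Delta)$. That $\mathscr V'$ is affine-algebraic is immediate from its defining polynomial equations. I expect the only delicate bookkeeping — not a conceptual obstacle — to be organizing the $2^p$ monomials $\prod_{j\in A}f^j$ by parity and keeping the padding dimensions consistent so that the single unitary $\mathbf U$ is genuinely square; the rank computation and the final identification of components are then formally identical to the ball case of constant~$2$, with the role of $k^2$-expansion replaced by the straightforward product expansion of $\prod_j(1-|f^j|^2)$.
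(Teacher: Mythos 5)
Your argument is correct and is essentially the proof the paper intends: the paper proves this proposition by citing the method of Theorem 1 in \cite{CM16}, which is exactly the parity decomposition of $\prod_j(1-|f^j|^2)$, Calabi's Lemma \ref{LemCaSOS}, the identification of the linear block via $(Jf)({\bf 0})$ from the polarized equation, and the tangential-dimension/rank count that you carry out (and that the paper itself spells out in the $k=2$ setting of Theorem \ref{LinearSection_Ball_IsoConsteq2} and Proposition \ref{Pro:HI_Disk_p-disk_IC2}). The only cosmetic remark is that for isometric constant $1$ there is no analogue of the quadratic terms $\Xi_l(w)$ and both sides of the Calabi relation already have $2^{p-1}$ entries, so no padding is actually needed.
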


Similarly, we observe that the method in the proof of Theorem \ref{LinearSection_Ball_IsoConsteq2} is also valid for any holomorphic isometry from $(\Delta,2ds_\Delta^2)$ to $(\Delta^p,ds_{\Delta^p}^2)$, where $p\ge 2$.
\begin{proposition}\label{Pro:HI_Disk_p-disk_IC2}
Let $f:(\Delta,2ds_\Delta^2)\to (\Delta^p,ds_{\Delta^p}^2)$ be a holomorphic isometric embedding, where $p\ge 2$ is an integer.
Then, $f(\Delta)$ is an irreducible component of $\mathscr V \cap \Delta^p$ for some affine-algebraic subvariety $\mathscr V\subset \mathbb C^p$ such that $\varrho(\mathscr V\cap \Delta^p) = \varrho(\Delta^p)\cap P$, where $P\subseteq \mathbb P^{2^p-1}$ is a projective linear subspace.
\end{proposition}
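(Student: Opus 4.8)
The plan is to transcribe, almost verbatim, the argument in the proof of Theorem~\ref{LinearSection_Ball_IsoConsteq2}, using the Segre embedding $\varsigma$ and its affine restriction $\varrho$ in place of the minimal embedding $\iota$. First I would reduce to $f(\mathbf 0)=\mathbf 0$: composing $f$ with a suitable automorphism of $\Delta^p$ moves $f(\mathbf 0)$ to the origin, and such an automorphism is induced by an automorphism of $(\mathbb P^1)^p$, hence by a projective linear automorphism of $\mathbb P^{2^p-1}$ preserving $\varrho(\Delta^p)$; since the assertion is phrased purely in terms of $\varrho$ and a projective linear subspace, it is invariant under this change. For $S\subseteq\{1,\ldots,p\}$ write $\sigma_S(z):=\prod_{j\in S}z_j$, so that $\varrho(z)=[\sigma_S(z)]_{S\subseteq\{1,\ldots,p\}}$ with $\sigma_\varnothing\equiv 1$ and $\sigma_{\{j\}}(z)=z_j$, and use the expansion $\prod_{j=1}^p(1-|z_j|^2)=\sum_S(-1)^{|S|}|\sigma_S(z)|^2$. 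With $f(\mathbf 0)=\mathbf 0$ the functional equation for a holomorphic isometry in $\widehat{\bf HI}_2$ reads
\[ \prod_{j=1}^p\bigl(1-|f_j(w)|^2\bigr)=(1-|w|^2)^2=1-|\sqrt2\,w|^2+|w^2|^2 \]
for $w\in\Delta$; separating the subsets $S$ of odd and even cardinality turns the left side into $1-\sum_{|S|\text{ odd}}|\sigma_S(f(w))|^2+\sum_{|S|\ge 2\text{ even}}|\sigma_S(f(w))|^2$, the exact polydisk analogue of Eq.~\eqref{EqCh8FE1}, with the odd-degree list containing the entries $f_1(w),\ldots,f_p(w)$.

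Next I would carry out the linear-algebra step exactly as before. Applying Lemma~\ref{LemCaSOS} to the two sum-of-squares presentations (after padding with zeros to a common length $N_0$) produces a unitary matrix $\mathbf U$ intertwining the vector built from $\sqrt2\,w$ together with the even-degree monomials $\sigma_S(f(w))$ and the vector built from $w^2$ together with the odd-degree monomials $\sigma_S(f(w))$. Differentiating the polarized functional equation in $\bar\zeta$ at $\zeta=\mathbf 0$ gives $\overline{(Jf)(\mathbf 0)}^{\,T}\mathbf f(w)=2w$ and $\overline{(Jf)(\mathbf 0)}^{\,T}(Jf)(\mathbf 0)=2$, where $(Jf)(\mathbf 0)=(f_1'(0),\ldots,f_p'(0))^T$ has rank $1$. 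As in the proof of Theorem~\ref{LinearSection_Ball_IsoConsteq2}, this identifies the block of $\mathbf U$ coupling the $\sqrt2\,w$ entry to the $f_j(w)$ entries with $\tfrac1{\sqrt2}(Jf)(\mathbf 0)$, and one assembles from $\mathbf U$ a matrix $\mathbf B$ so that the relations satisfied by $f(\Delta)$ become the vanishing of fixed $\mathbb C$-linear combinations of the $z_j$ and of the higher monomials $\sigma_S(z)$; this is the affine-algebraic subvariety $\mathscr V\subset\mathbb C^p$ (built as in Eq.~\eqref{Eq:AffineAlgeVar}), with $f(\Delta)\subseteq\mathscr V\cap\Delta^p$, together with the projective linear subspace $P\subseteq\mathbb P^{2^p-1}$ defined by the same linear equations in the homogeneous coordinates $[\xi_S]$, so that $\varrho(\mathscr V\cap\Delta^p)=\varrho(\Delta^p)\cap P$ by construction.

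Finally I would pin down the dimension. The matrix $E:=\tfrac12(Jf)(\mathbf 0)\overline{(Jf)(\mathbf 0)}^{\,T}$ is idempotent of rank $1$ (using $\overline{(Jf)(\mathbf 0)}^{\,T}(Jf)(\mathbf 0)=2$), so $E-\mathbf I_p$ has rank $p-1$; since $(E-\mathbf I_p)(Jf)(\mathbf 0)=\mathbf 0$, the rank inequalities force $\mathrm{rank}(E-\mathbf I_p)=p-1$ exactly, whence $\mathrm{tdim}_{\mathbf 0}\mathscr V\le p-(p-1)=1$. As $f(\Delta)\subseteq\mathscr V$ is $1$-dimensional and passes through $\mathbf 0$, we get $1\le\dim_{\mathbf 0}\mathscr V\le\mathrm{tdim}_{\mathbf 0}\mathscr V\le 1$, so $\mathscr V$ is smooth of dimension $1$ at $\mathbf 0$, and the irreducible component of $\mathscr V\cap\Delta^p$ containing $f(\Delta)$ (equivalently, containing $\mathbf 0$) must equal $f(\Delta)$. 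This is the assertion. There is no conceptual obstacle beyond Theorem~\ref{LinearSection_Ball_IsoConsteq2}; I expect the only real work to be bookkeeping — indexing the $2^p$ Segre coordinates consistently by subsets, matching the odd/even split to the two sum-of-squares sides, confirming that the reduction $f(\mathbf 0)=\mathbf 0$ is harmless for a conclusion phrased via $\varrho$ and a projective linear subspace, and tracking the zero-padding so that Lemma~\ref{LemCaSOS} applies.
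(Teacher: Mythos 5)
Your proposal is correct and follows essentially the same route as the paper: the paper's proof of this proposition consists precisely of expanding $\prod_{j=1}^p(1-|z_j|^2)$ into the odd/even sums of squares of the monomials $\sigma_S$, setting $n=1$, and rerunning the proof of Theorem \ref{LinearSection_Ball_IsoConsteq2} with $m_1(\Delta^p)=m_2(\Delta^p)+1=2^{p-1}$ and $N_0=2^{p-1}+1$. Your write-up just makes explicit the steps (Lemma \ref{LemCaSOS}, the Jacobian identity, the rank computation forcing $\mathrm{tdim}_{\bf 0}\mathscr V\le 1$) that the paper delegates to the earlier proof by reference.
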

\begin{proof}
Assume without loss of generality that $f({\bf 0})={\bf 0}$.
Note that
\begin{equation}\label{Eq3.6}
\begin{split}
&h_{\Delta^p}(z,z) = \prod_{j=1}^p (1-|z_j|^2)\\
=&1 - \sum_{n=1}^{\lfloor{p+1\over 2}\rfloor} \sum_{1\le i_1 <\cdots < i_{2n-1}\le p} |z_{i_1}\cdots z_{i_{2n-1}}|^2
+ \sum_{n=1}^{\lfloor {p\over 2} \rfloor} \sum_{1 \le j_1 <\cdots < j_{2n}\le p} |z_{j_1}\cdots z_{j_{2n}}|^2.
\end{split}
\end{equation}
In the proof of Theorem \ref{LinearSection_Ball_IsoConsteq2}, we put $n=1$ and replace the term $\sum_{l=1}^{m_1(\Omega)} |G_l^{(1)}(z)|^2$ (resp. $\sum_{l=1}^{m_2(\Omega)} |G_l^{(2)}(z)|^2$) by
\[ \sum_{n=1}^{\lfloor{p+1\over 2}\rfloor} \sum_{1\le i_1 <\cdots < i_{2n-1}\le p} \left|\prod_{\mu=1}^{2n-1}z_{i_\mu}\right|^2\qquad \left(\text{resp.}\;\sum_{n=1}^{\lfloor {p\over 2} \rfloor} \sum_{1 \le j_1 <\cdots < j_{2n}\le p} \left|\prod_{\mu=1}^{2n} z_{j_\mu}\right|^2 \;\right).\]
Indeed, we may define $m_1(\Delta^p)$ and $m_2(\Delta^p)$.
Then, we compute $m_1(\Delta^p)=m_2(\Delta^p)+1=2^{p-1}$.
In this situation, the integer $N_0$ defined in the proof of Theorem \ref{LinearSection_Ball_IsoConsteq2} is equal to $m_1(\Delta^p)+1 =2^{p-1}+1$.
Then, the result follows directly from the arguments in the proof of Theorem \ref{LinearSection_Ball_IsoConsteq2}.
\end{proof}

\subsubsection{On holomorphic isometries of complex unit balls into irreducible bounded symmetric domains of rank at most $3$}
Given an irreducible bounded symmetric domain $\Omega\Subset \mathbb C^N$ of rank $\ge 2$, it is natural to ask whether all holomorphic isometries in $\widehat{\bf HI}(\mathbb B^n,\Omega)$ arise from linear sections of the minimal embedding of the compact dual $X_c$ of $\Omega$ in general.
In Chan-Mok \cite{CM16}, we showed that the answer is affirmative for all holomorphic isometries in $\widehat{\bf HI}_{\lambda'}(\mathbb B^n,\Omega)$ whenever $\widehat{\bf HI}_{\lambda'}(\mathbb B^n,\Omega)\neq \varnothing$ and $\lambda'\in \{1,\mathrm{rank}(\Omega)\}$.
On the other hand, Theorem \ref{LinearSection_Ball_IsoConsteq2} asserts that the answer is also affirmative for all holomorphic isometries in $\widehat{\bf HI}_{2}(\mathbb B^n,\Omega)$ whenever $\widehat{\bf HI}_{2}(\mathbb B^n,\Omega)\neq \varnothing$.
In other words, we may prove Theorem \ref{LS_rk_2or3} as follows.
\begin{proof}[Proof of Theorem \ref{LS_rk_2or3}]
Recall that $\lambda'$ is an integer satisfying $1\le \lambda' \le r$ (cf. \cite[Lemma 3]{CM16})
If $r=2$, then $\lambda'=1$ or $\lambda'=2$.
In the case of $\lambda'=1$, the result follows from \cite[Theorem 1]{CM16}.
When $\lambda'=2$, we may suppose that $f({\bf 0})={\bf 0}$.
Then, $f$ is totally geodesic by \cite[Proposition 1]{CM16} and $f(\mathbb B^n)$ is indeed an affine linear section of $\Omega$ in $\mathbb C^N$ (cf. \cite{Mok12}).
Therefore, the result follows when $r=2$.
Now, we suppose that $r=3$.
If $\lambda'=1$ or $\lambda'=3$, then the result follows from Proposition 1 and Theorem 1 in \cite{CM16}.
If $\lambda'=2$, then the result follows from Theorem \ref{LinearSection_Ball_IsoConsteq2}.
\end{proof}
\begin{remark}
In general, we expect that Theorem 1 in \cite{CM16} holds true for any holomorphic isometry from $(\mathbb B^n,k g_{\mathbb B^n})$ to $(\Omega,g_\Omega)$ for $1\le k\le \mathrm{rank}(\Omega)$.
Actually, Theorem \ref{LS_rk_2or3} asserts that our expectation is true when $\Omega$ is an irreducible bounded symmetric domain of rank at most three.
Moreover, Theorem \ref{LS_rk_2or3} also holds true for any holomorphic isometry from $(\Delta,kds_\Delta^2)$ to $(\Delta^p,ds_{\Delta^p}^2)$ for any positive integer $k$ and any integer $p$ such that $2\le p\le 3$.
However, for $2\le p\le 3$ one may make use of Ng's classification of all holomorphic isometries from $(\Delta,kds_\Delta^2)$ to $(\Delta^p,ds_{\Delta^p}^2)$ (cf. \cite{Ng10}) to prove such an analogue of Theorem \ref{LS_rk_2or3}.

On the other hand, when $\Omega\Subset \mathbb C^N$ is an irreducible bounded symmetric domain of rank $r\ge 4$, it is not known that whether all holomorphic isometries in $\widehat{\bf HI}_k(\mathbb B^n,\Omega)$ arise from linear sections of the minimal embedding of the compact dual $X_c$ of $\Omega$ for $3\le k\le r-1$. In other words, the problem remains open for the space $\widehat{\bf HI}_k(\mathbb B^n,\Omega)$ when $\Omega$ is of rank $r\ge 4$ and $3\le k\le r-1$.
\end{remark}

Now, we would like to emphasise the following consequence of both Theorem \ref{Thm_UB_PR} and Theorem \ref{LS_rk_2or3}.
\begin{corollary}\label{Cor:Consequence1}
Let $\Omega\Subset \mathbb C^N$ be an irreducible bounded symmetric domain of rank $\ge 2$ in its Harish-Chandra realization such that $n(\Omega)\le p(\Omega)$.
If $f\in \widehat{\bf HI}_{\lambda'}(\mathbb B^n,\Omega)$ for some real constant $\lambda'>0$, then we have the following:
\begin{enumerate}
\item $n\le p(\Omega)$ when $\lambda'\ge 2$; $n\le p(\Omega)+1$ when $\lambda'=1$,
\item $f(\mathbb B^n)$ is an irreducible component of some complex-analytic subvariety $\mathscr V\subset \Omega$ satisfying $\iota(\mathscr V)=P\cap \iota(\Omega)$, where $\iota:X_c\hookrightarrow \mathbb P\big(\Gamma(X_c,\mathcal O(1))^*\big)\cong\mathbb P^{N'}$ is the minimal embedding and $P\subseteq \mathbb P\big(\Gamma(X_c,\mathcal O(1))^*\big)\cong\mathbb P^{N'}$ is some projective linear subspace.
\end{enumerate}
\end{corollary}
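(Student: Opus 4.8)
The plan is to deduce the corollary from the results already established in this section, after first noting that the hypothesis on $\Omega$ constrains its rank. Indeed, as observed in the remark following Lemma~\ref{NullDim1}, the condition $n(\Omega)\le p(\Omega)$ forces $\mathrm{rank}(\Omega)\le 3$; combined with the standing assumption $\mathrm{rank}(\Omega)\ge 2$ this gives $\mathrm{rank}(\Omega)\in\{2,3\}$, which is precisely the range covered by Theorem~\ref{LS_rk_2or3}. Recall also from \cite[Lemma~3]{CM16} that the isometric constant $\lambda'$ is necessarily an integer with $1\le\lambda'\le\mathrm{rank}(\Omega)$.

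For assertion~(1) I would argue by cases on $\lambda'$. If $\lambda'\ge 2$, then since $\Omega$ satisfies $n(\Omega)\le p(\Omega)$ we may apply Theorem~\ref{Thm_UB_PR}(1) (equivalently, Corollary~\ref{CorNull_leq_VMRT}) to conclude $n\le p(\Omega)$. If $\lambda'=1$, then $f\in\widehat{\bf HI}_1(\mathbb B^n,\Omega)$ and Mok's sharp upper bound \cite{Mok16} gives $n\le p(\Omega)+1$, which is the asserted inequality in that case.

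For assertion~(2), since $\mathrm{rank}(\Omega)\in\{2,3\}$, Theorem~\ref{LS_rk_2or3} applies verbatim and tells us that $f(\mathbb B^n)$ is an irreducible component of some complex-analytic subvariety $\mathscr V\subset\Omega$ satisfying $\iota(\mathscr V)=P\cap\iota(\Omega)$ for a suitable projective linear subspace $P\subseteq\mathbb P\big(\Gamma(X_c,\mathcal O(1))^*\big)\cong\mathbb P^{N'}$, where $\iota$ is the minimal embedding of $X_c$. This is exactly the statement of~(2), so nothing further is required.

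In summary, the corollary is a repackaging of Theorem~\ref{Thm_UB_PR}, Mok's bound in \cite{Mok16}, and Theorem~\ref{LS_rk_2or3}, and I do not expect any genuine obstacle. The only points deserving care are that the hypothesis $n(\Omega)\le p(\Omega)$ is used twice---once, via the remark after Lemma~\ref{NullDim1}, to place us in the scope of Theorem~\ref{LS_rk_2or3}, and once directly in Theorem~\ref{Thm_UB_PR}(1)---and that Theorem~\ref{Thm_UB_PR} is formulated only for integer constants $\lambda'\ge 2$, so the case $\lambda'=1$ of~(1) must be treated by invoking \cite{Mok16} rather than Theorem~\ref{Thm_UB_PR}.
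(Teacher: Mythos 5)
Your proposal is correct and follows essentially the same route as the paper's own proof: part (1) via Theorem~\ref{Thm_UB_PR} for $\lambda'\ge 2$ and Mok's bound from \cite{Mok16} for $\lambda'=1$, and part (2) via Theorem~\ref{LS_rk_2or3} after noting that $n(\Omega)\le p(\Omega)$ forces $\mathrm{rank}(\Omega)\le 3$. No issues.
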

\begin{proof}
Note that $(1)$ follows from Theorem \ref{Thm_UB_PR} when $\lambda'\ge 2$.
On the other hand, $(1)$ follows from Theorem $2$ in \cite{Mok16} when $\lambda'=1$.
Moreover, $(2)$ follows from Theorem \ref{LS_rk_2or3} because $\Omega$ is of rank at most three whenever $\Omega$ satisfies $n(\Omega)\le p(\Omega)$.
\end{proof}
\begin{remark}\label{remark3.14}
\begin{enumerate}
\item[(1)]
In particular, Corollary \ref{Cor:Consequence1} holds true when $\Omega$ is either of type $\mathrm{IV}$ or of the exceptional type by Lemma \ref{NullDim1}.
From the method used in this section, it is not known whether both part $(1)$ and part $(2)$ of Corollary \ref{Cor:Consequence1} still hold true in general when the assumption $n(\Omega)\le p(\Omega)$ is removed.
\item[(2)]
Recently, Yuan \cite{Yuan17} pointed out to the author that one may obtain upper bounds on dimensions of isometrically embedded complex unit balls into irreducible bounded symmetric domains $\Omega$ of rank $\ge 2$ by using the functional equation for any holomorphic isometry $f:(\mathbb B^n,k g_{\mathbb B^n})\to (\Omega,g_{\Omega})$, $k\ge 2$, with $f({\bf 0})={\bf 0}$ and the signature of the sum of squares (cf. \cite[Proposition 2.11]{XY16}).
When the target is $\Omega=D^{\mathrm{I}}_{3,4}$, it suffices to consider the case where $k=2$ and we compute $m_2(D^{\mathrm{I}}_{3,4}) = {3 \choose 2}{4\choose 2} = 18$ by \cite{FHX16}.
(Noting that $\Omega=D^{\mathrm{I}}_{3,4}$ does not satisfy $n(\Omega)\le p(\Omega)$.)
Moreover, one may make use of the signature of the sum of squares (cf. \cite[Proposition 2.11]{XY16}) to conclude that ${n(n+1)\over 2} \le m_2(D^{\mathrm{I}}_{3,4}) = {3 \choose 2}{4\choose 2} = 18$, i.e., $n\le 5 = p(D^{\mathrm{I}}_{3,4})$.
In other words, combining with the results of the present article, both part $(1)$ and part $(2)$ of Corollary \ref{Cor:Consequence1} holds true for $\Omega=D^{\mathrm{I}}_{3,4}$.
Moreover, in general this method does not imply that $n \le p(\Omega)$ if there exists a holomorphic isometry $f:(\mathbb B^n,k g_{\mathbb B^n})\to (\Omega,g_{\Omega})$ with $k\ge 2$, where $\Omega$ is any irreducible bounded symmetric domain of rank $\ge 2$.
\end{enumerate}
\end{remark}
\section{On holomorphic isometries of complex unit balls into certain irreducible bounded symmetric domains of rank $2$}
\subsection{Characterization of images of holomorphic isometries}\label{Sec:4.1}
We start with the following lemma which identifies those irreducible bounded symmetric domains $\Omega\Subset \mathbb C^N$ of rank $2$ which carry extra properties.
\begin{lemma}\label{SpecialRank2}
Let $\Omega\Subset\mathbb C^N$ be an irreducible bounded symmetric domain of rank $2$ in its Harish-Chandra realization.
Then, $2N > N'+1$ provided that $\Omega$ is not biholomorphic to $D^{\mathrm{I}}_{2,q}$ for any $q \ge 5$.
\end{lemma}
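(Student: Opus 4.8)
The plan is to convert the inequality $2N>N'+1$ into an inequality for the number $m_2(\Omega)$ of quadratic polynomials occurring in the minimal embedding, and then to verify that inequality case by case through the classification of rank-$2$ irreducible bounded symmetric domains. By the description of $h_\Omega$ recalled in Section \ref{Sec:2}, for a rank-$2$ domain $\Omega\Subset\mathbb C^N$ each homogeneous polynomial $G_l$ in the minimal embedding $\iota(z)=[1,G_1(z),\ldots,G_{N'}(z)]$ has degree $1$ or $2$, the degree-$1$ ones being exactly $G_j(z)=z_j$ for $1\le j\le N$; equivalently, $h_\Omega(z,z)=1-\lVert z\rVert^2_{\mathbb C^N}+\sum_{l=1}^{m_2(\Omega)}|G_l^{(2)}(z)|^2$ with $\deg G_l^{(2)}=2$. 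Hence $N'=N+m_2(\Omega)$, and so $2N>N'+1$ is equivalent to $m_2(\Omega)\le N-2$. It therefore suffices to prove $m_2(\Omega)\le N-2$ for every rank-$2$ irreducible bounded symmetric domain not biholomorphic to $D^{\mathrm I}_{2,q}$ with $q\ge 5$.

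First I would recall that, up to biholomorphism, the rank-$2$ irreducible bounded symmetric domains are $D^{\mathrm{IV}}_n$ $(n\ge 3)$, $D^{\mathrm I}_{2,q}$ $(q\ge 2)$, $D^{\mathrm{II}}_m$ $(m\in\{4,5\})$, $D^{\mathrm{III}}_2$ and the $16$-dimensional exceptional domain $D^{\mathrm V}$, subject to the identifications $D^{\mathrm{III}}_2\cong D^{\mathrm{IV}}_3$, $D^{\mathrm I}_{2,2}\cong D^{\mathrm{IV}}_4$ and $D^{\mathrm{II}}_4\cong D^{\mathrm{IV}}_6$ (cf. Wolf \cite{Wo72} and Mok \cite[pp.\;249--251]{Mok89}). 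Since these coincidences are compatible with the invariants $N$ and $N'$, it is enough to treat the four families $D^{\mathrm{IV}}_n$, $D^{\mathrm I}_{2,q}$, $D^{\mathrm{II}}_m$ $(m\in\{4,5\})$ and $D^{\mathrm V}$.

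Then I would carry out the verification, reading off $m_2(\Omega)$ either from the explicit form of $h_\Omega(z,z)$ in Loos \cite{Lo77} and Fang--Huang--Xiao \cite{FHX16} or from the value $N'+1=\dim_{\mathbb C}\Gamma(X_c,\mathcal O(1))$ for the minimal embedding of the compact dual $X_c$. For $D^{\mathrm{IV}}_n$ we have $h_\Omega(z,z)=1-\lVert z\rVert^2_{\mathbb C^n}+\big|{1\over 2}\sum_{j=1}^{n}z_j^2\big|^2$, so $m_2(\Omega)=1\le n-2$ for $n\ge 3$. For $D^{\mathrm I}_{2,q}$, realized by $2\times q$ matrices $Z$ with $h_\Omega(Z,Z)=\det\!\big(I_2-Z\overline Z^{\,T}\big)=1-\lVert Z\rVert^2_{\mathbb C^{2q}}+\det\!\big(Z\overline Z^{\,T}\big)$, the Cauchy--Binet formula gives $\det\!\big(Z\overline Z^{\,T}\big)=\sum_{1\le j_1<j_2\le q}\big|\det Z_{\{j_1,j_2\}}\big|^2$, so $m_2(\Omega)=\binom q2$ while $N=2q$; the inequality $\binom q2\le 2q-2$ holds precisely for $2\le q\le 4$ and fails exactly when $q\ge 5$. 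For $D^{\mathrm{II}}_m$ with $m\in\{4,5\}$ one has $N=\binom m2$ and $N'+1=2^{m-1}$ (cf. \cite{Mok89, FHX16}), whence $m_2(\Omega)=2^{m-1}-1-\binom m2$, equal to $1\le 4$ for $m=4$ and $5\le 8$ for $m=5$. Finally $D^{\mathrm V}$ has $N=16$, and its compact dual is minimally embedded in $\mathbb P^{26}$ by the $27$-dimensional fundamental representation of $E_6$, so $N'=26$ and $m_2(\Omega)=10\le 14$. Hence $m_2(\Omega)\le N-2$, i.e., $2N>N'+1$, in every case except $D^{\mathrm I}_{2,q}$ with $q\ge 5$.

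This argument is a finite computation rather than a substantial proof, and I expect no genuine obstacle; the one point demanding care is to have the complete list of rank-$2$ irreducible bounded symmetric domains — including the low-dimensional coincidences among the classical types and the exceptional domain $D^{\mathrm V}$ — together with the correct values of $N'$, equivalently of $m_2(\Omega)$, in each case, all of which are available from Mok \cite{Mok89} and Fang--Huang--Xiao \cite{FHX16}.
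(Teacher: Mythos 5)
Your proof is correct and follows essentially the same route as the paper: both reduce the claim to a finite check over the classification of rank-$2$ irreducible bounded symmetric domains (with the low-dimensional coincidences) and verify $2N>N'+1$ numerically in each case, the paper reading $N'$ off from Nakagawa--Takagi while you equivalently compute $m_2(\Omega)=N'-N$ from the explicit form of $h_\Omega$. All your numerical values and the resulting characterization of the exceptional cases $D^{\mathrm I}_{2,q}$, $q\ge 5$, agree with the paper's.
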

\begin{proof}
The proof follows from direct computation for any irreducible bounded symmetric domain $\Omega$ of rank $2$ by using results in \cite[p.\;663]{NT76}.
Actually, we obtain from \cite{NT76} the value of $N':=N(1)$ for any irreducible Hermitian symmetric space $X_c$ of the compact type.
\paragraph{Case 1:} When $\Omega$ is not biholomorphic to any type-$\mathrm{I}$ domains $D^{\mathrm{I}}_{2,q}$ for $q\ge 3$, $\Omega$ is either biholomorphic to $D^{\mathrm{IV}}_m$ (for some $m\ge 3$), $D^{\mathrm{II}}_5$ or $D^{\mathrm{V}}$ because of $D^{\mathrm{IV}}_4 \cong D^{\mathrm{I}}_{2,2}$, $D^{\mathrm{IV}}_6\cong D^{\mathrm{II}}_4$ and $D^{\mathrm{III}}_2\cong D^{\mathrm{IV}}_3$.
If $\Omega\cong D^{\mathrm{IV}}_m$, $m\ge 3$, then it is clear that
$2m > N'+1 = m+2$.
If $\Omega\cong D^{\mathrm{II}}_5$, then
$2 \dim_{\mathbb C} D^{\mathrm{II}}_5 = 20
 > N'+1 = 2^{5-1}=16$.
If $\Omega\cong D^{\mathrm{V}}$, then
$2 \dim_{\mathbb C} D^{\mathrm{V}} = 32
 > N'+1 
 = 26+1=27$,
where $X_c$ is the compact dual of $D^{\mathrm{V}}$.
Thus, any such $\Omega$ satisfies the desired property.
\paragraph{Case 2:}
When $\Omega\cong D^{\mathrm{I}}_{2,q}$ for some $q\ge 3$, we have
$4q=2N > N'+1
= {2+q\choose q}= {(q+1)(q+2)\over 2}$
if and only if
$0 > q^2-5q+2=(q-{5\over 2})^2-{17\over 4}$, which is equivalent to $q=3$ or $q=4$ because $q\ge 3$ is an integer and $(q-{5\over 2})^2 \ge {25\over 4} > {17\over 4}$ for $q\ge 5$.
The result follows.
\end{proof}
\begin{remark}
We consider rank-$2$ irreducible bounded symmetric domains $\Omega$ because the functional equations of holomorphic isometries from $(\mathbb B^n,g_{\mathbb B^n})$ to $(\Omega,g_\Omega)$ are similar to those of holomorphic isometries from $(\mathbb B^n,g_{\mathbb B^n})$ to $(D^{\mathrm{IV}}_m,g_{D^{\mathrm{IV}}_m})$ for $m\ge 3$ under the assumption that the isometries map ${\bf 0}$ to ${\bf 0}$. This is related to the study in \cite{CM16}.
In addition, we will assume that such a bounded symmetric domain $\Omega$ satisfies $2\cdot \dim_{\mathbb C}\Omega> N'+1$.
\end{remark}

Let $\Omega\Subset\mathbb C^N$ be an irreducible bounded symmetric domain of rank $2$ in its Harish-Chandra realization satisfying
$2N > N'+1$, where $N':=\dim_{\mathbb C} \mathbb P\big(\Gamma(X_c,\mathcal O(1))^*\big)$ and $X_c$ is the compact dual Hermitian symmetric space of $\Omega$.
Recall that $g_\Omega$ is the canonical K\"ahler-Einstein metric on $\Omega$ normalized so that minimal disks are of constant Gaussian curvature $-2$.
In terms of the Harish-Chandra coordinates $z=(z_1,\ldots,z_N)\in \Omega\subset \mathbb C^N$, the K\"ahler form with respect to $g_\Omega$ is equal to $\omega_{g_\Omega} = -\sqrt{-1}\partial\overline\partial \log h_\Omega(z,z)$, where
\[ h_\Omega(z,\xi)
= 1-\sum_{j=1}^N z_j\overline{\xi_j} + 
\sum_{l=1}^{N'-N} \hat G_l(z)\overline{\hat G_l(\xi)} \]
such that each $\hat G_l(z)$ is a homogeneous polynomial of degree $2$ in $z$, i.e., $\hat G_l(\lambda z) = \lambda^2 \hat G_l(z)$ for any $\lambda\in \mathbb C^*$.
Note that from Section \ref{Sec:2}, we have $G_{l+N}(z)=\hat G_{l}(z)$ for $l=1,\ldots,N'-N$.
Write ${\bf G}(z) := \begin{pmatrix}
\hat G_1(z),\ldots, \hat G_{N'-N}(z)
\end{pmatrix}^T$.
Let $n,N$ and $N'$ be positive integers satisfying $N'-N+n\le N$.
We also let ${\bf U'}\in M(N-n,N;\mathbb C)$ be such that $\mathrm{rank}({\bf U'})=N-n$.
Then, we define
\begin{equation}\label{EqWU'}
W_{\bf U'}
:=\left\{(z_1,\ldots,z_N)\in \Omega:
 {\bf U'} \begin{pmatrix}
z_1\\\vdots \\ z_N
\end{pmatrix} = \begin{pmatrix}
{\bf G}(z) \\{\bf 0}_{(2N-n-N') \times 1}
\end{pmatrix} \right\}.
\end{equation}
The following generalizes the study of $\widehat{\bf HI}_1(\mathbb B^n,D^{\mathrm{IV}}_N)$, $N\ge 3$, in Chan-Mok \cite{CM16}.
Moreover, in the following proposition,
the reason of assuming $n\le 2N-N' =:n_0(\Omega)$ is that
there is a certain explicitly defined class of complex-analytic subvarieties of $\Omega$ which contains the images of all holomorphic isometries $(\mathbb B^n,g_{\mathbb B^n})\to (\Omega,g_\Omega)$ up to composing with elements in $\mathrm{Aut}(\Omega)$, and each of them is contained entirely in $W_{\bf U''}$ for some matrix ${\bf U''}\in M(N-n_0(\Omega),N;\mathbb C)$ satisfying ${\bf U''}\overline{\bf U''}^T={\bf I_{N-n_0(\Omega)}}$.
We will show that this gives a relation between the spaces $\widehat{\bf HI}_1(\mathbb B^n,\Omega)$, $1\le n\le n_0(\Omega)-1$, and $\widehat{\bf HI}_1(\mathbb B^{n_0(\Omega)},\Omega)$.
\begin{proposition}\label{CharCUBintoIrrBSDrk2}
Let $\Omega\Subset \mathbb C^N$ be an irreducible bounded symmetric domain of rank $2$ in its Harish-Chandra realization such that $2N
> N'+1$, where $N':=\dim_{\mathbb C} \mathbb P\big(\Gamma(X_c,\mathcal O(1))^*\big)$ and $X_c$ is the compact dual Hermitian symmetric space of $\Omega$.
Let $n$ be an integer satisfying $1\le n$ $\le$ $2N$ $-$ $N'$.
If $f\in \widehat{\bf HI}_1(\mathbb B^n,\Omega)$, then $\Psi(f(\mathbb B^n))$ is the irreducible component of $W_{\bf U'}$ containing ${\bf 0}$ for some matrix
${\bf U'}\in M(N-n,N;\mathbb C)$ satisfying ${\bf U'}\overline{\bf U'}^T={\bf I_{N-n}}$ and some $\Psi\in \mathrm{Aut}(\Omega)$ satisfying $\Psi(f({\bf 0}))={\bf 0}$.
Conversely, given any matrix ${\bf U''}\in M(N-n,N;\mathbb C)$ satisfying ${\bf U''}\overline{\bf U''}^T={\bf I_{N-n}}$, the irreducible component of $W_{\bf U''}$ containing ${\bf 0}$ is the image of some $\widetilde f\in \widehat{\bf HI}_1(\mathbb B^n,\Omega)$.
\end{proposition}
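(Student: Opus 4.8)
The plan is to adapt the functional-equation argument of Chan--Mok \cite{CM16} used in the proof of Theorem~\ref{LinearSection_Ball_IsoConsteq2}, paying close attention to the column-block structure of the unitary matrix produced by Calabi's lemma. The hypothesis $n\le n_0(\Omega)=2N-N'$ is exactly what guarantees that in the sum-of-squares identity below the left-hand side has no more terms than the right-hand side, so that Lemma~\ref{LemCaSOS} applies. For the direct assertion I would first pick $\Psi\in\mathrm{Aut}(\Omega)$ with $\Psi(f({\bf 0}))={\bf 0}$ and replace $f$ by $\Psi\circ f$, which again lies in $\widehat{\bf HI}_1(\mathbb B^n,\Omega)$ since $\mathrm{Aut}(\Omega)$ acts by isometries of $g_\Omega$, so that $f({\bf 0})={\bf 0}$; it is then $\Psi(f(\mathbb B^n))$ that gets described. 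Writing $h_\Omega(z,z)=1-\sum_{j=1}^N|z_j|^2+\sum_{l=1}^{N'-N}|\hat G_l(z)|^2$ with $\hat G_l$ homogeneous of degree $2$, the functional equation $h_\Omega(f(w),f(w))=1-\lVert w\rVert_{\mathbb C^n}^2$ rearranges into the equality of $\sum_{\mu=1}^n|w_\mu|^2+\sum_{l=1}^{N'-N}|\hat G_l(f(w))|^2$ with $\sum_{j=1}^N|f^j(w)|^2$. Since $n+(N'-N)\le N$, I would pad the left-hand side with $2N-N'-n\ge 0$ zeros and apply Lemma~\ref{LemCaSOS}: there is ${\bf U}=\begin{bmatrix}{\bf A}&{\bf B}&{\bf C}\end{bmatrix}\in U(N)$, split into column blocks of widths $n$, $N'-N$, $2N-N'-n$, with ${\bf f}(w)={\bf A}\,w+{\bf B}\,{\bf G}(f(w))$, where ${\bf f},{\bf G}$ are the column vectors formed by $f^1,\dots,f^N$ and $\hat G_1,\dots,\hat G_{N'-N}$. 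Left-multiplying this identity by $\overline{\bf A}^T$, $\overline{\bf B}^T$, $\overline{\bf C}^T$ and using orthonormality of the columns of ${\bf U}$ yields $\overline{\bf A}^T{\bf f}(w)=w$, $\overline{\bf B}^T{\bf f}(w)={\bf G}(f(w))$ and $\overline{\bf C}^T{\bf f}(w)={\bf 0}$; so, setting ${\bf D}:=\begin{bmatrix}{\bf B}&{\bf C}\end{bmatrix}$ and ${\bf U'}:=\overline{\bf D}^T\in M(N-n,N;\mathbb C)$ (which satisfies ${\bf U'}\overline{\bf U'}^T={\bf I_{N-n}}$, hence has rank $N-n$), the last two identities say precisely that $f(\mathbb B^n)\subseteq W_{\bf U'}$.

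To finish the direct part I would observe that the $N-n$ equations cutting out $W_{\bf U'}$ have Jacobian ${\bf U'}$ at ${\bf 0}$ — the degree-$2$ polynomials $\hat G_l$ contribute no linear part — of rank $N-n$, so $\mathrm{tdim}_{\bf 0}W_{\bf U'}\le n$; on the other hand $f(\mathbb B^n)\subseteq W_{\bf U'}$ together with ${\bf 0}=f({\bf 0})$ gives $\dim_{\bf 0}W_{\bf U'}\ge n$. Hence $W_{\bf U'}$ is smooth of dimension $n$ at ${\bf 0}$, and then — exactly as in the proof of Theorem~\ref{LinearSection_Ball_IsoConsteq2} — the irreducible component of $W_{\bf U'}$ through ${\bf 0}$ coincides with $f(\mathbb B^n)$, which is the assertion (with $\Psi$ the automorphism fixed above).

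For the converse I would run this in reverse. Given ${\bf U''}\in M(N-n,N;\mathbb C)$ with ${\bf U''}\overline{\bf U''}^T={\bf I_{N-n}}$, Lemma~\ref{Matrix2} produces ${\bf V}\in M(n,N;\mathbb C)$ with $\begin{bmatrix}{\bf V}\\{\bf U''}\end{bmatrix}\in U(N)$; put ${\bf A}:=\overline{\bf V}^T$, so ${\bf U}:=\begin{bmatrix}{\bf A}&\overline{\bf U''}^T\end{bmatrix}\in U(N)$. As above, $W_{\bf U''}$ is smooth of dimension $n$ at ${\bf 0}$; let $S$ be its irreducible component through ${\bf 0}$. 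For $z\in W_{\bf U''}$ one has $\sum_l|\hat G_l(z)|^2=\lVert{\bf U''}z\rVert^2=\overline z^T\,\overline{\bf U''}^T{\bf U''}\,z$, and since ${\bf I_N}-\overline{\bf U''}^T{\bf U''}={\bf A}\overline{\bf A}^T$ (from ${\bf U}$ unitary) this gives $h_\Omega(z,z)=1-\lVert\overline{\bf A}^Tz\rVert_{\mathbb C^n}^2$ on $W_{\bf U''}$. Because $T_{\bf 0}S=\ker{\bf U''}=\mathrm{Im}\,{\bf A}$ and $\overline{\bf A}^T{\bf A}={\bf I_n}$, the projection $z\mapsto\overline{\bf A}^Tz$ restricts to a biholomorphism of a neighbourhood of ${\bf 0}$ in $S$ onto one of ${\bf 0}$ in $\mathbb C^n$; let $\phi$ be its inverse. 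Then $h_\Omega(\phi(w),\phi(w))=1-\lVert w\rVert^2$, so $\phi$ is a germ at ${\bf 0}$ of holomorphic isometry from $(\mathbb B^n,g_{\mathbb B^n})$ into $(\Omega,g_\Omega)$; by Mok's extension theorem \cite{Mok12} it extends to some $\widetilde f\in\widehat{\bf HI}_1(\mathbb B^n,\Omega)$ with $\widetilde f({\bf 0})={\bf 0}$. Analytic continuation of the identities ${\bf U''}\phi(w)=\big({\bf G}(\phi(w)),{\bf 0}\big)^T$ gives $\widetilde f(\mathbb B^n)\subseteq W_{\bf U''}$, and since $\widetilde f(\mathbb B^n)$ is irreducible of dimension $n$ through the smooth point ${\bf 0}$ of $W_{\bf U''}$, it must coincide with $S$.

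I expect two steps to require the most care. The first is the column-block bookkeeping for ${\bf U}$: one must check that the ``${\bf C}$-block'', present precisely when $n<n_0(\Omega)$, supplies exactly the $2N-N'-n$ extra linear equations ``$=0$'' needed so that ${\bf U'}$ has the right number $N-n$ of rows and lands in the class of matrices defining the sets $W_{\bf U'}$. The second is identifying $f(\mathbb B^n)$ (resp.\ $\widetilde f(\mathbb B^n)$) with the irreducible component of $W_{\bf U'}$ (resp.\ $W_{\bf U''}$) through ${\bf 0}$, which rests on the dimension estimate together with the fact, already used in the proof of Theorem~\ref{LinearSection_Ball_IsoConsteq2}, that the image of such a holomorphic isometry is a complex-analytic subvariety of $\Omega$. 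The only non-elementary external input is the appeal to Mok's extension theorem \cite{Mok12} in the converse.
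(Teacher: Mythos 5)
Your proposal is correct and follows essentially the same route as the paper: Calabi's lemma applied to the rearranged functional equation to produce the unitary matrix whose lower block is ${\bf U'}$, the Jacobian/dimension count at ${\bf 0}$ to identify $f(\mathbb B^n)$ with the irreducible component of $W_{\bf U'}$ through ${\bf 0}$, and, for the converse, completion of ${\bf U''}$ to a unitary via Lemma~\ref{Matrix2}, the resulting local K\"ahler potential identity $h_\Omega(z,z)=1-\lVert\overline{\bf A}^Tz\rVert^2$ on $W_{\bf U''}$, and Mok's extension theorem. The only differences are cosmetic (column-block versus row-block bookkeeping for the unitary, and using the linear projection $\overline{\bf A}^T$ as the chart instead of an abstract local parametrization of $\mathrm{Reg}(S')$).
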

\begin{proof}
Let $f\in \widehat{\bf HI}_1(\mathbb B^n,\Omega)$.
Assume without loss of generality that $f({\bf 0})={\bf 0}$.
Then, we have
\[ 1-\sum_{j=1}^N |f^j(w)|^2 + 
\sum_{l=1}^{N'-N} |\hat G_l(f(w))|^2 = 1-\sum_{l=1}^n |w_l|^2. \]
Note that $2N-1\ge N'$ and $2N-N'\ge n$.
By Lemma \ref{LemCaSOS}, there exists ${\bf U}\in U(N)$ such that
\begin{equation}\label{Eq7211}
{\bf U} \begin{pmatrix}
f^1(w),\ldots,f^N(w)
\end{pmatrix}^T = \begin{pmatrix}
w_1,\ldots, w_n, {\bf G}(f(w))^T, {\bf 0}_{1\times (2N-n-N')}
\end{pmatrix}^T.
\end{equation}
We write ${\bf U}=\begin{bmatrix}
{\bf A'}\\{\bf U'}
\end{bmatrix}$ so that ${\bf U'}\in M(N-n,N;\mathbb C)$ is a matrix which satisfies ${\bf U'}\overline{\bf U'}^T={\bf I_{N-n}}$.
Then, we have $f(\mathbb B^n)\subseteq W_{\bf U'}$ by $\text{Eq.}\; (\ref{Eq7211})$.
It is clear that the Jacobian matrix of $W_{\bf U'}$ at ${\bf 0}$ is equal to ${\bf U'}$, which is of full rank $N-n$ so that $W_{\bf U'}$ is smooth at ${\bf 0}$ and of dimension $n$ at ${\bf 0}$.
Let $S$ be the irreducible component of $W_{\bf U'}$ containing $f(\mathbb B^n)$, which also contains ${\bf 0}$.
Then, we have $\dim S=n$.
Since both $S$ and $f(\mathbb B^n)$ are irreducible complex-analytic subvarieties of $\Omega$, $f(\mathbb B^n)\subseteq S$ and $\dim S= \dim f(\mathbb B^n) = n$, we have
$S=f(\mathbb B^n)$.
Thus, the irreducible component of $W_{\bf U'}$ containing ${\bf 0}$ is the image of some holomorphic isometric embedding $f:(\mathbb B^n,g_{\mathbb B^n})\to (\Omega,g_\Omega)$.

Conversely, let $n$ be an integer satisfying $1\le n\le 2N-N'$ and let ${\bf U''}\in M(N-n,N;\mathbb C)$ be a matrix satisfying ${\bf U''}\overline{\bf U''}^T={\bf I_{N-n}}$.
By Lemma \ref{Matrix2}, there exists ${\bf A''}\in M(n,N;\mathbb C)$ such that $\begin{bmatrix}
{\bf A''}\\{\bf U''}
\end{bmatrix}\in U(N)$ so that
\begin{equation}\label{Eq7212}
 \begin{bmatrix}
{\bf A''}\\{\bf U''}
\end{bmatrix}
\begin{pmatrix}
z_1\\\vdots \\ z_N
\end{pmatrix}
=\begin{pmatrix} 
{\bf w}(z)\\
{\bf G}(z)\\
{\bf 0}_{(2N-n-N') \times 1}
\end{pmatrix}\quad \forall\;z=(z_1,\ldots,z_n) \in W_{\bf U''} ,
\end{equation}
where
${\bf w}(z)=\begin{pmatrix}
w_1(z),\ldots,w_n(z)
\end{pmatrix}^T :={\bf A''}\begin{pmatrix}
z_1,\ldots,z_N
\end{pmatrix}^T$.
Note that the Jacobian matrix of $W_{\bf U''}$ at ${\bf 0}$ is equal to ${\bf U''}$, which is of full rank $N-n$ so that $W_{\bf U''}$ is smooth at ${\bf 0}$ and of dimension $n$ at ${\bf 0}$.
Let $S'$ be the irreducible component of $W_{\bf U''}$ containing ${\bf 0}$.
Then, we have $\dim S'=n$.
Actually $S'$ is precisely the point set closure of the connected component of $\mathrm{Reg}(W_{\bf U''})$ containing ${\bf 0}$ in $\Omega$.
Denote by $\mathrm{Reg}(S')$ the regular locus of $S'$.
Then, $\mathrm{Reg}(S')$ is a connected complex manifold lying inside $\Omega$ and ${\bf 0}\in \mathrm{Reg}(S')$.
Let $\varphi:B({\bf 0})\to \mathrm{Reg}(S')$ be a biholomorphism onto an open neighborhood of ${\bf 0}$ in $\mathrm{Reg}(S')$ such that $\varphi({\bf 0})={\bf 0}$, where $B({\bf 0})$ is some open neighborhood of ${\bf 0}$ in $\mathbb C^n$.
Here the image $\varphi(B({\bf 0}))$ is a germ of complex submanifold of $\Omega$ at ${\bf 0}$, i.e., a complex submanifold of some open neighborhood of ${\bf 0}$ in $\Omega$.
Note that
$h_\Omega(z,z) = 1-\sum_{l=1}^n |w_l(z)|^2$
for any $z\in S'$ and $\zeta=(\zeta_1,\ldots,\zeta_n)$ can be regarded as local holomorphic coordinates on $\mathrm{Reg}(S')$ around ${\bf 0}\in \mathrm{Reg}(S')$.
Then, it follows from Eq. (\ref{Eq7212}) that for $\zeta\in B({\bf 0})$, we have
\begin{equation}\label{Eq4.4}
h_\Omega(\varphi(\zeta),\varphi(\zeta)) = 1-\sum_{l=1}^n |w_l(\varphi(\zeta))|^2
\end{equation}
and $-\log h_\Omega(\varphi(\zeta),\varphi(\zeta))
= -\log\left(1-\sum_{l=1}^n |w_l(\varphi(\zeta))|^2 \right)$ is a local K\"ahler potential on $\mathrm{Reg}(S')$ which is the restriction of the K\"ahler potential on $(\Omega,g_\Omega)$ to an open neighborhood of ${\bf 0}$ in $\mathrm{Reg}(S')$.
It follows from Eq. (\ref{Eq4.4}) that the germ of $S'$ at ${\bf 0}$ is the image of a germ of holomorphic isometry $\widetilde f:(\mathbb B^n,g_{\mathbb B^n};{\bf 0})\to (\Omega,g_\Omega;{\bf 0})$.
By the extension theorem of Mok \cite{Mok12}, $\widetilde f$ extends to a holomorphic isometric embedding $\widetilde f:(\mathbb B^n,g_{\mathbb B^n})\to (\Omega,g_\Omega)$.
Since both $\widetilde f(\mathbb B^n)$ and $S'$ are $n$-dimensional irreducible complex-analytic subvarieties of $\Omega$ and
$\widetilde f(B^n({\bf 0},\varepsilon))\subset \widetilde f(\mathbb B^n)\cap S'$ for some real number $\varepsilon\in (0,1)$.
It follows that $S'=\widetilde f(\mathbb B^n)$.
Hence, the irreducible component of $W_{\bf U''}$ containing ${\bf 0}$ is the image of some holomorphic isometric embedding $\widetilde f \in \widehat{\bf HI}_1(\mathbb B^n,\Omega)$.
\end{proof}
\begin{remark}
From the proof of Lemma \ref{SpecialRank2}, we see that Proposition \ref{CharCUBintoIrrBSDrk2} precisely holds true for the space $\widehat{\bf HI}_1(\mathbb B^n,\Omega)$ whenever the integer $n$ and the bounded symmetric domain $\Omega$ satisfy one of the following:
\begin{enumerate}
\item $\Omega \cong D^{\mathrm{I}}_{2,3}$, $1\le n\le 3=p(D^{\mathrm{I}}_{2,3})$,
\item $\Omega\cong D^{\mathrm{I}}_{2,4}$, $1\le n\le 2$,
\item $\Omega\cong D^{\mathrm{II}}_5$, $1\le n \le 5 = p(D^{\mathrm{II}}_5)-1$,
\item $\Omega \cong D^{\mathrm{IV}}_m$ for some integer $m\ge 3$, $1\le n\le m-1=p(D^{\mathrm{IV}}_m)+1$,
\item $\Omega\cong D^{\mathrm{V}}$, $1\le n\le 6$.
\end{enumerate}
\noindent Moreover, Proposition \ref{CharCUBintoIrrBSDrk2} actually provides the classification of images of all $f\in \widehat{\bf HI}_1(\Delta,\Omega)$ whenever $\Omega$ is a rank-$2$ irreducible bounded symmetric domain which is not biholomorphic to $D^{\mathrm{I}}_{2,q}$ for any $q\ge 5$.
This also solves part of Problem $3$ in \cite[p. 2645]{MN10} theoretically.
It is excepted that there are many incongruent holomorphic isometries in $\widehat{\bf HI}_1(\Delta,\Omega)$.
However, Proposition \ref{CharCUBintoIrrBSDrk2} at least provides a source of constructing explicit examples of holomorphic isometries in $\widehat{\bf HI}_1(\Delta,\Omega)$.
In particular, for the case where the target is an irreducible bounded symmetric domain of rank $2$, Problem $3$ in \cite[p. 2645]{MN10} remains unsolved precisely in the case where the target $\Omega$ is $D^{\mathrm{I}}_{2,q}$ for some $q\ge 5$.
\end{remark}
\subsection{Proof of Theorem \ref{Thm_Slicing_for_special_cases1}}\label{Sec:4.2}
As we have mentioned in Section \ref{Sec:4.1}, Proposition \ref{CharCUBintoIrrBSDrk2} actually gives a relation between the spaces $\widehat{\bf HI}_1(\mathbb B^n,\Omega)$, $1\le n\le n_0(\Omega)-1$, and $\widehat{\bf HI}_1(\mathbb B^{n_0(\Omega)},\Omega)$. In other words, this yields Theorem \ref{Thm_Slicing_for_special_cases1}.
\begin{proof}[Proof of Theorem \ref{Thm_Slicing_for_special_cases1}]
We follow the setting in the proof of Proposition \ref{CharCUBintoIrrBSDrk2}.
Assume without loss of generality that $f({\bf 0})={\bf 0}$.
Note that $N'-N+n<N$ and thus $f(\mathbb B^n)$ is the irreducible component of $W_{\bf U'}$ containing ${\bf 0}$ for some matrix ${\bf U'}\in M(N-n,N;\mathbb C)$ satisfying ${\bf U'}\overline{\bf U'}^T={\bf I_{N-n}}$ by Proposition \ref{CharCUBintoIrrBSDrk2}.
Moreover, we have
\[ \begin{bmatrix}
{\bf A'}\\{\bf U'}
\end{bmatrix}\begin{pmatrix}
f^1(w),\ldots,f^N(w)
\end{pmatrix}^T = \begin{pmatrix}
w_1,\ldots,w_n, {\bf G}(f(w))^T, {\bf 0}_{1\times (2N-N'-n)}
\end{pmatrix}^T \]
for some ${\bf A'}\in M(n,N;\mathbb C)$ such that $\begin{bmatrix}
{\bf A'}\\{\bf U'}
\end{bmatrix}\in U(N)$ after composing with some element in the isotropy subgroup of $\mathrm{Aut}(\mathbb B^n)$ at ${\bf 0}$ if necessary (by Lemma \ref{Matrix2}).
We write ${\bf U'}=\begin{bmatrix}
{\bf U'_1} \\ {\bf U'_2}
\end{bmatrix}$ for some ${\bf U'_1} \in M(N'-N,N;\mathbb C)$ and ${\bf U'_2}\in M(2N-N'-n,N;\mathbb C)$.
Moreover, we have
${\bf U'_1}\begin{pmatrix}
z_1,\ldots, z_N
\end{pmatrix}^T
= {\bf G}(z)$
and ${\bf U'_1}\overline{\bf U'_1}^T = {\bf I_{N'-N}}$ for any $z\in W_{\bf U'}$.
It follows from Proposition \ref{CharCUBintoIrrBSDrk2} that the irreducible component of $W_{\bf U'_1}$ containing ${\bf 0}$ is the image of some holomorphic isometric embedding $F:(\mathbb B^{n_0},g_{\mathbb B^{n_0}})\to (\Omega,g_\Omega)$, where $n_0=n_0(\Omega):=2N-N'$.
We may suppose that $F({\bf 0})={\bf 0}$ without loss of generality.
Since $f(\mathbb B^n)\subset \Omega$ is irreducible and $f(\mathbb B^n)\subset W_{\bf U'_1}$, $S:=f(\mathbb B^n)$ lies inside the irreducible component $S':=F(\mathbb B^{n_0})$ of $W_{\bf U'_1}$ containing ${\bf 0}$.
Since $(S,g_\Omega|_S)\cong (\mathbb B^n,g_{\mathbb B^n})$ and $(S',g_\Omega|_{S'})\cong (\mathbb B^{n_0},g_{\mathbb B^{n_0}})$ are of constant holomorphic sectional curvature $-2$, $(S,g_\Omega|_S)\subset (S',g_\Omega|_{S'})$ is totally geodesic and the result follows (cf. the proof of \cite[Theorem 2]{CM16}).
\end{proof}
\begin{remark}\text{}
\begin{enumerate}
\item[(1)]
It follows from Lemma \ref{SpecialRank2} that Theorem \ref{Thm_Slicing_for_special_cases1} holds true when the pair $(\Omega,n_0(\Omega))$ is one of the following:
\begin{enumerate}
\item $\Omega \cong D^{\mathrm{I}}_{2,3}$, $n_0(\Omega)=3$,
\item $\Omega\cong D^{\mathrm{I}}_{2,4}$, $n_0(\Omega)=2$,
\item $\Omega\cong D^{\mathrm{II}}_5$, $n_0(\Omega)=5$,
\item $\Omega \cong D^{\mathrm{IV}}_m$ ($m\ge 3$), $n_0(\Omega)=m-1$,
\item $\Omega\cong D^{\mathrm{V}}$, $n_0(\Omega)=6$.
\end{enumerate}
\item[(2)] It is not known whether Theorem \ref{Thm_Slicing_for_special_cases1} still holds true when $n_0(\Omega)$ was replaced by $p(\Omega)+1$ and $\Omega\not\cong D^{\mathrm{IV}}_m$ for any integer $m\ge 3$.
\item[(3)]
For the particular case where $\Omega=D^{\mathrm{I}}_{2,3}$, it follows from \cite{Mok16} that if the space $\widehat{\bf HI}_1(\mathbb B^n,D^{\mathrm{I}}_{2,3})$ is non-empty, then $n\le p(D^{\mathrm{I}}_{2,3})+1=4$.
In this case, it is motivated by our study in the present article to consider the following problem in order to classify all holomorphic isometries in $\widehat{\bf HI}_1(\mathbb B^n,D^{\mathrm{I}}_{2,3})$:

\noindent Given any $f\in \widehat{\bf HI}_1(\mathbb B^3,D^{\mathrm{I}}_{2,3})$, could $f$ be factorized as $f=F\circ \rho$ for some
$F\in \widehat{\bf HI}_1(\mathbb B^4,D^{\mathrm{I}}_{2,3})$ and $\rho\in \widehat{\bf HI}_1(\mathbb B^3,\mathbb B^4)$?

\noindent If the problem were solved and the answer were affirmative, then the classification of all holomorphic isometries in $\widehat{\bf HI}_1(\mathbb B^n,D^{\mathrm{I}}_{2,3})$ would be reduced to the uniqueness problem for non-standard (i.e., not totally geodesic) holomorphic isometries in $\widehat{\bf HI}_1(\mathbb B^4,D^{\mathrm{I}}_{2,3})$ constructed by Mok \cite{Mok16}.
\end{enumerate}
\end{remark}

\begin{center}
\textsc{Acknowledgement}
\end{center}
This work is part of the author's Ph.D. thesis \cite{Ch16} at The University of Hong Kong except for item $(2)$ of Remark \ref{remark3.14}.
He would like to express his gratitude to his supervisor, Professor Ngaiming Mok, for his guidance and encouragement.
The author would also like to thank Dr. Yuan Yuan for his interest in the research which leads to item $(2)$ of Remark \ref{remark3.14}.

\end{document}